\newcommand{\PARstart}[2]{#1#2}
\newtheorem{thm}{Theorem}
\newtheorem{prop}{Proposition}
\theoremstyle{definition}}
\newtheorem{defn}{Definition}
\newtheorem{remark}{Remark}
\newtheorem{example}{Example}
\newcommand{\NEW}[1]{{\em #1}}
\newcommand{\R}{\mathbb{R}}
\def\proofarg[#1]{\noindent\hspace{2em}{\itshape Proof of #1: }}
\newcommand{\sR}{\mathcal{R}}
\newcommand{\supp}{\mathrm{supp}}
\newcommand{\extr}{\mathrm{extr}}
\newcommand{\co}{\mathrm{co}}
\newcommand{\abs}[1]{\lvert #1 \rvert}
\newcommand{\norm}[1]{\lVert #1 \rVert}
\newcommand{\quotepage}[1]{{\tt\small #1}}
\title{
Ergodic Control and Polyhedral approaches to PageRank Optimization
}
\author{Olivier Fercoq \footnotemark[1] \footnotemark[3]\and
Marianne Akian \footnotemark[1] \and
Mustapha Bouhtou \footnotemark[2] \and
St\'ephane Gaubert \footnotemark[1] 
}
\author{Olivier Fercoq*, Marianne Akian*, Mustapha Bouhtou** and St\'ephane Gaubert*%
\thanks{*INRIA Saclay and CMAP Ecole Polytechnique}
\thanks{{\tt\small olivier.fercoq@inria.fr, 
marianne.akian@inria.fr,
stephane.gaubert@inria.fr 
}}%
\thanks{**Orange Labs}
\thanks{{\tt\small mustapha.bouhtou@orange-ftgroup.com}}%
\thanks{The doctoral work of the first author is supported by Orange Labs through the research contract CRE~3795 with INRIA}%
}
\begin{document}

\maketitle
\ifthenelse{\boolean{arxiv}}
{
 \footnotetext[1]{INRIA Saclay and CMAP Ecole Polytechnique\\
 {\tt\small olivier.fercoq@inria.fr}\\
 {\tt\small marianne.akian@inria.fr}\\
 {\tt\small stephane.gaubert@inria.fr}}%
 \footnotetext[2]{France Télécom R \& D \\
 {\tt\small mustapha.bouhtou@orange-ftgroup.com}}%
 \footnotetext[3]{The doctoral work of the first author is supported by Orange Labs through the research contract CRE~3795 with INRIA}%
}
{}

\begin{abstract}
We study a~general class of PageRank optimization problems
which consist in finding an~optimal outlink 
strategy for a~web site subject to design constraints.
We consider both a continuous problem,
in which one can choose the intensity of a~link,
and a~discrete one, in which in each page, there are obligatory
links, facultative links and forbidden links.
We show that the continuous problem, as well as its discrete variant
when there are no constraints coupling different pages, can
both be modeled by constrained Markov 
decision processes with ergodic reward, in which the webmaster 
determines the transition probabilities of websurfers.
Although the number of actions turns out to be exponential, we show
that an associated polytope of transition measures
has a concise representation, from which we deduce that 
the continuous problem is solvable in polynomial time, 
and that the same is true for the discrete problem
when there are no coupling constraints.
We also provide efficient algorithms, adapted to very large networks.
Then, we investigate the qualitative features of optimal outlink strategies,
and identify in particular assumptions under which there exists a~``master'' page to which all controlled pages should 
point. We report numerical results on fragments of the real web graph.
 \end{abstract}

\section{Introduction}

\PARstart{T}{he PageRank} introduced by Brin and Page~\cite{Brin-Anatomy}
is defined as the invariant measure of a~walk made by a~random surfer
on the web graph. When reading a~given page, the surfer either
selects a~link from the current page (with a~uniform probability),
and moves to the page pointed by that link, or interrupts his current search,
and then moves to an~arbitrary page, which is selected
according to given ``zapping'' probabilities. 
The rank of a~page is defined as its frequency of visit
by the random surfer.

The interest of the PageRank algorithm is to give each page of the web 
a measure of its popularity. It is a~link-based measure, 
meaning that it only takes into account the hyperlinks 
between web pages, and not their content. 
It is combined in practice with content-dependent measures, 
taking into 
account the relevance of the text of the page to the query of the user, 
in order 
to determine the order in which the answer pages will be shown
by the search engine. This leads to a~family of search methods the details of which 
may vary (and are often not publicly known).  
However, a~general feature of these methods is that among the pages  
with a~comparable relevance to a~query, the ones with the highest PageRank will appear first.  

The importance of optimizing the PageRank, 
specially for e-business purposes, has led to the development of a~number of companies offering 
Search Engine Optimization services. 
We refer in particular the reader to~\cite{seo-bestpractice} 
 for a~discussion of the PageRank optimization methods which are used in practice.
Understanding PageRank optimization is also useful to fight
malicious behaviors like link spamming, which intend to increase
artificially the PageRank of a~web page~\cite{linkspamalliances}, \cite{collusionTopologies}. 


The 
PageRank has motivated a number of works, dealing
in particular with computational issues.
Classically, the PageRank vector is computed by the power algorithm~\cite{Brin-Anatomy}. 
There has been a~considerable work on designing new, more efficient approaches for 
its computation~\cite{PRcomputing, LanMey-Beyond}: Gauss-Seidel method~\cite{Arasu-GaussSeidelPageRank}, 
aggregation/disaggregation~\cite{LanMey-Beyond} 
or distributed randomized algorithms~\cite{IshiiTempo-DistrRandAlg, Nazin-DistrRandAlg}. Other active fields are 
the development of new ranking algorithms~\cite{Borodin-linkanalysis} or the study of the web graph~\cite{Bonato-webgraph}. 



The optimization of PageRank has 
been studied
by several authors.
Avrachenkov and Litvak analyzed in~\cite{AvrLit-OptStrat}
the case of a~single controlled page and 
determined an~optimal strategy.
In~\cite{Viennot-2006}, Mathieu and Viennot established several bounds
indicating to what extent the rank of the pages of a~(multi-page) website can be changed,
and derived an~optimal referencing strategy in a~special unconstrained case:
if the webmaster can fix arbitrarily the hyperlinks in a~web site,
then, it is optimal to delete every link pointing outside
the web site.
To avoid such degenerate strategies, De Kerchove, Ninove and van Dooren~\cite{NinKer-PRopt} studied
the~problem of maximizing the sum of the PageRank coordinates in a~web site, provided that from each page, there is at least
one path consisting of hyperlinks and leading to an~external page. 
They gave a~necessary structural condition satisfied by an~optimal outlink
strategy.
In~\cite{Nin-PhD}, Ninove developed a~heuristic based on these theoretical
results, which was experimentally shown to be efficient. 
In~\cite{IshiiTempo-FragileDataPRcomp}, Ishii and Tempo investigated
the sensitivity of the PageRank to fragile (i.e.~erroneous or imperfectly known) web data, including fragile links (servers not responding, links
to deleted pages, etc.). They~gave bounds on the possible
variation of PageRank and introduced an~approximate PageRank
optimization problem, which they showed to be equivalent
to a~linear program. 
In~\cite{Blondel-PRopt}, (see also~\cite{Blondel-PRoptArxiv} for more details),
Csáji, Jungers and Blondel thought of fragile links as controlled links and
gave an~algorithm
to optimize in polynomial time the PageRank of a~single page.

In the present paper, we study 
a more general PageRank optimization problem, in
which a~webmaster, controlling a~set of pages (her web site), wishes
to maximize a~utility function depending on the PageRank or, more
generally, on the associated \NEW{occupation measure} (frequencies of visit
of every link, the latter are more informative).
For instance, the webmaster
might wish to maximize the number of clicks per time unit of a~certain
hyperlink bringing an~income, or the rank of the most visible page of her site,
or the sum of the ranks of the pages of this site, etc.
We consider specifically two versions of the PageRank optimization problem.

We first study a~{\em continuous} 
 version of the problem in which the set of actions of the webmaster is the set of admissible transition probabilities
of websurfers.
This means that the webmaster, by choosing the importance of the hyperlinks 
of the pages she controls (size of font, color, position of the link within the page),
determines a~continuum of possible transition probabilities.
Although this model has been already proposed
by Nemirovsky and Avrachenkov~\cite{Avra-WeightedPageRank},
its optimization does not seem to have considered previously.
 This continuous version includes rather realistic
constraints: for instance, the webmaster may start from
a ``template'' or ``skeleton'' (given by designers),
and be allowed to modify this skeleton only to a~limited extent.
Moreover,
we shall allow {\em coupling constraints} between
different pages (for instance, the rank of one page may be required
to be greater than the rank of another page, constraints involving the sum
of the pageranks of a~subset of pages are also allowed, etc.). 

Following~\cite{IshiiTempo-FragileDataPRcomp,Blondel-PRopt}, we also study a~{\em discrete} version of the problem, in which in each page, there are obligatory
links, facultative links and forbidden links. Then, the decision
consists in selecting the subset of facultative links which are
actually included in the page. 

We show that when there are no coupling constraints between different
pages and when the utility function is linear, the continuous 
 and discrete
problems both can be solved in polynomial time by reduction to a
linear program (our first main result, Theorem~\ref{thm:polytime}).
When specialized to the discrete problem,
this extends
Theorem~1 of~\cite{Blondel-PRopt}, which only applies
to the case 
in which 
the utility function represents the PageRank of a~single page. 
The proof of Theorem~\ref{thm:polytime} 
relies on the observation that
the polytope generated by the transition probability measures that
are uniform on some subsets of pages
has a~concise representation with a polynomial
number of facets (Theorem~\ref{thm:nonEmptySet}).
This leads us to prove a general result of independent interest
concerning Markov decision processes
with implicitly defined action sets. We introduce the notion of {\em well-described} Markov decision processes, in which, although
there may be an {\em exponential} number of actions, there is a polynomial time {\em strong separation oracle} for the actions polytope (whereas the classical complexity results assume
that the actions are explicitly enumerated~\cite{Papadimitriou-complMDP}).
We prove in Theorem~\ref{thm:polytime+}, as an application of 
the theory of Khachiyan's ellipsoid method (see~\cite{Lovasz-geomAlgo}), that the ergodic control problem for well-described Markov decision process is polynomial time solvable
(even in the multi-chain framework). Then, Theorem~\ref{thm:polytime}
follows as a direct corollary. We note that maximization or separation oracles have been previously considered in dynamic programming for different purposes (dealing with unnkown parameters~\cite{Givan-boundedMDP,huanxu-robustMDP}, or approximating large scale problems~\cite{kveton}). 

Proposition~\ref{prop:contrrate} yields a~fixed point scheme with a~contraction rate independent of the number of pages. Indeed, the contraction rate
depends only on the ``damping factor'' (probability
that the user interrupts his current search).
Therefore, this problem can be solved efficiently for very large instances by Markov decision techniques. Our results show that optimizing the PageRank is not much more difficult than computing it, provided there are no coupling constraints: indeed, Proposition~\ref{prop:greedy} shows that by comparison, the execution time is only increased by a~$\log n$ factor, where $n$ is the number of pages.
Note that the Markov decision process which we construct here is quite different
from the one of~\cite{Blondel-PRopt}, the latter
is a stochastic shortest path problem, whose construction
is based on a~graph rewriting technique,
in which intermediate (dummy) nodes are added to the graph. 
Such nodes are not subject to damping and therefore, 
the power iteration looses its 
uniform contraction.
In our approach, we use a more general ergodic control model, which allows us to consider a general linear utility function, and avoids
adding such extra nodes.
Experiments also show that the present approach leads to a~faster algorithm (Section~\ref{sec:expDisc}).

We also study the continuous 
problem with general (linear) coupling constraints,
and show that the latter can also be solved in polynomial time by reduction
to a~{\em constrained} ergodic control problem. Proposition~\ref{prop:Lag} 
yields an algorithm to solve the PageRank optimization problem
with coupling constraints, which scales well if the number of coupling constraints remains small.
The resolution uses Lagrangian relaxation and convex programming techniques like the bundle method.
There is 
little hope to solve efficiently, in general, the discrete problem with general coupling constraints since Csáji, Jungers and Blondel have proved in~\cite{Blondel-PRopt}
that the discrete PageRank optimization problem with mutual exclusion constraints is NP-complete.
Nevertheless, we develop a~heuristic for the discrete PageRank optimization
problem with linear coupling constraints, based on the optimal
solution of a relaxed continuous problem (Section~\ref{sec:expCoupl}).
On test instances, approximate optimality certificates show that the
solution found by the heuristic is at most at 1.7\% of the optimum.

Using the concept of mean reward before teleportation, 
we identify in Theorem~\ref{thm:masterPageDisc} (our second main result) assumptions 
under which there exists a~``master'' page to which all controlled pages should point.
The theorem gives an~ordering of the pages such that
in loose terms, the optimal strategy is at each page to point to the allowed pages with highest order.
The structure of the obtained optimal website is somehow reminiscent of Theorem~12 in~\cite{NinKer-PRopt}, 
but in~\cite{NinKer-PRopt}, there is only one constraint: the result is thus different.
\ifthenelse{\boolean{optimalitycondition}}
{
When the problem has coupling constraints, the mean reward before teleportation
still gives information on optimal strategies (Theorem~\ref{thm:condopt}).
}
{}

We report numerical results on the web site of one
of the authors (including an~aggregation of surrounding pages) as well as on a~fragment of the web ($4.10^5$ pages from the universities of New Zealand).

We finally note that an~early Markov Decision Model for PageRank optimization
was introduced by Bouhtou and Gaubert in 2007, in the course of the supervision of the student project of Vlasceanu and Winkler~\cite{Vla-Hyp}.

The paper is organized as follows. In Section~\ref{sec:mod}, 
we introduce the general PageRank optimization problem. 
In Section~\ref{sec:polytope}, we give a~concise description 
of the polytope of uniform transition probabilities.
In Section~\ref{sec:solvelocal}, we show that every Markov decision
process which admits such a concise description is polynomial time
solvable (Theorem~\ref{thm:polytime+}), and we deduce as a corollary our first main result, Theorem~\ref{thm:polytime}.
Section~\ref{subsec-value} describes an~efficient fixed point scheme 
for the resolution of the PageRank optimization problem with local constraints. 
In Section~\ref{sec:shape}, we give the ''master page'' Theorem (Theorem~\ref{thm:masterPageDisc}). 
We deal with coupling constraints in Section~\ref{sec:coupling}. 
We give experimental results on real data in Section~\ref{sec:exp}. 

\section{PageRank optimization problems} \label{sec:mod}

\subsection{Google's PageRank}

We first recall the basic elements of the Google PageRank computation, 
see~\cite{Brin-Anatomy} and~\cite{LanMey-Beyond} for more information.
We call \NEW{web graph} the directed graph with a~node per web page
and
an arc from page~$i$ to page~$j$ if page~$i$ contains a~hyperlink to page~$j$.
We identify the set of pages to $[n]:=\{1,\ldots,n\}$. 
\ifthenelse{\boolean{arxiv}}{

}
{
}
Let $N_i$ denote the number of hyperlinks contained in page $i$.
Assume first that $N_i\geq 1$ for all $i\in [n]$, meaning that every page
has at least one outlink. Then, we construct
the $n\times n$ stochastic matrix $S$, which is such that
\begin{align}
S_{i,j}=\begin{cases}N_i^{-1} & \text{if page }j \text{ is pointed to from page }i\\
0 & \text{otherwise}
\end{cases}\label{e-def-S}
\end{align}
This is the transition matrix of a~Markov chain modeling the behavior of
a surfer choosing a~link at random, uniformly among the ones included in the current page and moving to the page pointed by this link.
The matrix $S$ only depends of the web graph. 

We also fix a~row vector $z \in \R_+^n $, the \NEW{zapping}
or \NEW{teleportation} vector, which must be stochastic (so, $\sum_{j\in[n]} z_j =1$), 
together with a~\NEW{damping factor} $\alpha \in [0,1]$ and define the new stochastic matrix
\begin{equation*}
 P=\alpha S + (1-\alpha) e z
\end{equation*}
where $e$ is the (column) vector in $\R^n$ with all entries equal to~$1$. 

Consider now a~Markov chain
$(X_t)_{t\geq0}$ with transition matrix $P$, so that for all $i,j\in [n]$, 
$\mathbb{P}(X_{t+1}=j | X_t=i) = P_{i,j}$.
Then,
$X_t$ represents the position of a~websurfer at time $t$:
when at page $i$, the websurfer continues his current
exploration of the web with probability $\alpha$ and moves to the next
page by following the links included in page $i$, as above, or with
probability $1-\alpha$, stops his current exploration and then teleports
to page $j$ with probability~$z_j$.

When some page $i$ has no outlink, $N_i=0$, and so the entries of the $i$th row of the matrix $S$ cannot be defined according to~\eqref{e-def-S}.
Then, we set $S_{i,j}:=z_j$. In other words, when visiting a~page without
any outlink, the websurfer interrupts its current exploration and teleports
to page~$j$ again with probability $z_j$. It is also possible to define
another probability vector~$Z$ (different from $z$) for the teleportation from these ``dangling nodes''. 

The \NEW{PageRank} $\pi$ is defined as the invariant measure of 
the Markov chain $(X_t)_{t\geq 0}$ representing the behavior of the websurfer.
This invariant measure is unique if $\alpha<1$, or if $P$ is irreducible.

Typically, one takes $\alpha=0.85$, meaning that at each step, a~websurfer
interrupts his current search with probability $0.15\simeq 1/7$.
The advantages of the introduction of the damping factor
and of the teleportation vector are well known. First, it guarantees
that the power algorithm converges to the PageRank with a~geometric
rate $\alpha$ independent of the size (and other characteristics) of
the web graph. In addition, the teleportation vector may be used
to ``tune'' the PageRank if necessary. By default, $z=e^T/n$
is the uniform stochastic vector.
 We will assume in the sequel that $\alpha<1$ and $z_j>0$ for all $j\in [n]$,
so that $P$ is irreducible.

The graph on Figure~\ref{graph:base} represents a~fragment of the web graph.
We obtained the graph
by performing a~crawl of our laboratory with 1500 pages. We set the teleportation vector in such a~way that 
the 5 surrounding institutional pages are dominant.
The teleportation probabilities to these pages were taken to be proportional to the
PageRank (we used the Google Toolbar, which gives a~rough indication
of the PageRank, on a~logarithmic scale).
After running the PageRank algorithm on this graph, we found that within 
the controlled site, the main page of this author has the biggest PageRank
(consistently with the results provided by Google search).

\begin{figure}[thbp]
  \centering
\ifthenelse{\boolean{notikz}}
{
\includegraphics[width=23em]{graphExample}
}
{
\small{
\begin{tikzpicture}[>=latex', join=bevel, scale=0.32]
\pgfsetlinewidth{0.5bp}
\pgfsetcolor{black}
  \draw [->] (599bp,454bp) .. controls (621bp,437bp) and (664bp,406bp)  .. (704bp,376bp);
  \draw [->] (332bp,510bp) .. controls (342bp,510bp) and (350bp,508bp)  .. (350bp,503bp) .. controls (350bp,500bp) and (347bp,498bp)  .. (332bp,496bp);
  \draw [->] (463bp,516bp) .. controls (438bp,514bp) and (381bp,510bp)  .. (332bp,505bp);
  \draw [->] (332bp,499bp) .. controls (385bp,492bp) and (500bp,476bp)  .. (564bp,468bp);
  \draw [->] (129bp,305bp) .. controls (169bp,350bp) and (238bp,430bp)  .. (283bp,481bp);
  \draw [->] (467bp,507bp) .. controls (454bp,493bp) and (430bp,469bp)  .. (401bp,439bp);
  \draw [->] (569bp,478bp) .. controls (541bp,504bp) and (481bp,561bp)  .. (441bp,599bp);
  \draw [->] (497bp,318bp) .. controls (519bp,295bp) and (560bp,251bp)  .. (601bp,208bp);
  \draw [->] (329bp,490bp) .. controls (350bp,479bp) and (380bp,461bp)  .. (402bp,440bp) .. controls (427bp,417bp) and (449bp,386bp)  .. (469bp,355bp);
  \draw [->] (409bp,599bp) .. controls (389bp,581bp) and (356bp,552bp)  .. (325bp,523bp);
  \draw [->] (504bp,336bp) .. controls (544bp,339bp) and (629bp,343bp)  .. (696bp,347bp);
  \draw [->] (330bp,514bp) .. controls (371bp,531bp) and (447bp,564bp)  .. (513bp,591bp);
  \draw [->] (419bp,594bp) .. controls (410bp,564bp) and (393bp,509bp)  .. (376bp,454bp);
  \draw [->] (295bp,475bp) .. controls (280bp,415bp) and (242bp,276bp)  .. (215bp,176bp);
  \draw [->] (571bp,450bp) .. controls (554bp,428bp) and (523bp,389bp)  .. (496bp,354bp);
  \draw [->] (325bp,523bp) .. controls (346bp,543bp) and (380bp,572bp)  .. (409bp,600bp);
  \draw [->] (446bp,614bp) .. controls (460bp,614bp) and (479bp,615bp)  .. (508bp,615bp);
  \draw [->] (230bp,675bp) .. controls (248bp,632bp) and (272bp,576bp)  .. (291bp,530bp);
  \draw [->] (435bp,595bp) .. controls (443bp,579bp) and (456bp,555bp)  .. (470bp,529bp);
  \draw [->] (460bp,345bp) .. controls (449bp,350bp) and (437bp,356bp)  .. (414bp,367bp);
  \draw [->] (115bp,532bp) .. controls (161bp,525bp) and (224bp,516bp)  .. (274bp,507bp);
  \draw [->] (291bp,530bp) .. controls (277bp,563bp) and (253bp,620bp)  .. (230bp,675bp);
  \draw [->] (488bp,358bp) .. controls (501bp,398bp) and (529bp,485bp)  .. (552bp,556bp);
  \draw [->] (581bp,485bp) .. controls (580bp,500bp) and (579bp,522bp)  .. (576bp,553bp);
  \draw [->] (565bp,459bp) .. controls (535bp,450bp) and (475bp,431bp)  .. (417bp,413bp);
  \draw [->] (274bp,508bp) .. controls (238bp,514bp) and (173bp,524bp)  .. (114bp,532bp);
  \draw [->] (463bp,320bp) .. controls (423bp,288bp) and (325bp,210bp)  .. (252bp,151bp);
  \draw [->] (445bp,621bp) .. controls (455bp,621bp) and (464bp,619bp)  .. (464bp,614bp) .. controls (464bp,611bp) and (460bp,609bp)  .. (445bp,607bp);
  \draw [->] (329bp,451bp) .. controls (327bp,456bp) and (324bp,462bp)  .. (316bp,476bp);
  \draw [->] (564bp,468bp) .. controls (518bp,474bp) and (403bp,489bp)  .. (332bp,499bp);
  \draw [->] (458bp,331bp) .. controls (398bp,321bp) and (238bp,295bp)  .. (143bp,279bp);
  \draw [->] (480bp,311bp) .. controls (478bp,279bp) and (475bp,219bp)  .. (471bp,159bp);
  \draw [->] (566bp,473bp) .. controls (547bp,482bp) and (517bp,497bp)  .. (488bp,511bp);
  \draw [->] (503bp,342bp) .. controls (513bp,342bp) and (522bp,340bp)  .. (522bp,335bp) .. controls (522bp,332bp) and (518bp,330bp)  .. (503bp,328bp);
  \draw [->] (284bp,481bp) .. controls (250bp,442bp) and (179bp,362bp)  .. (129bp,305bp);
  \draw [->] (316bp,477bp) .. controls (319bp,472bp) and (321bp,466bp)  .. (329bp,451bp);
  \draw [->] (485bp,527bp) .. controls (494bp,536bp) and (506bp,549bp)  .. (527bp,570bp);
  \draw [->] (469bp,355bp) .. controls (455bp,378bp) and (430bp,414bp)  .. (402bp,440bp) .. controls (383bp,458bp) and (358bp,474bp)  .. (329bp,490bp);
\begin{scope}
  \pgfsetstrokecolor{black}
  \pgfsetfillcolor{lightgray}
  \filldraw (476bp,517bp) ellipse (13bp and 14bp);
  \draw (476bp,517bp) node {\href{http://amadeus.inria.fr/gaubert/PAPERS/TU-0190}{a}};
\end{scope}
\begin{scope}
  \pgfsetstrokecolor{black}
  \draw (466bp,80bp) ellipse (79bp and 79bp);
  \draw (466bp,80bp) node {\href{http://www.siam.org/meetings/ct09}{conference}};
\end{scope}
\begin{scope}
  \pgfsetstrokecolor{black}
  \pgfsetfillcolor{lightgray}
  \filldraw (481bp,335bp) ellipse (23bp and 24bp);
  \draw (481bp,335bp) node {\href{http://amadeus.inria.fr/gaubert/programme/index.html}{c}};
\end{scope}
\begin{scope}
  \pgfsetstrokecolor{black}
  \pgfsetfillcolor{lightgray}
  \filldraw (425bp,614bp) ellipse (21bp and 21bp);
  \draw (425bp,614bp) node {\href{http://amadeus.inria.fr/gaubert/HOWARD2.html}{b}};
\end{scope}
\begin{scope}
  \pgfsetstrokecolor{black}
  \pgfsetfillcolor{lightgray}
  \filldraw (583bp,465bp) ellipse (19bp and 20bp);
  \draw (583bp,465bp) node {\href{http://amadeus.inria.fr/gaubert/papers.html}{d}};
\end{scope}
\begin{scope}
  \pgfsetstrokecolor{black}
  \draw (357bp,394bp) ellipse (63bp and 63bp);
  \draw (357bp,394bp) node {\href{http://www.inria.fr}{institute}};
\end{scope}
\begin{scope}
  \pgfsetstrokecolor{black}
  \draw (650bp,156bp) ellipse (71bp and 71bp);
  \draw (650bp,156bp) node {\href{http://www.lsv.ens-cachan.fr/Seminaires/anciens?l=fr&sem=200811041100}{university}};
\end{scope}
\begin{scope}
  \pgfsetstrokecolor{black}
  \draw (742bp,349bp) ellipse (46bp and 46bp);
  \draw (742bp,349bp) node {\href{http://dx.doi.org/10.1145/1190095.1190110}{arxiv}};
\end{scope}
\begin{scope}
  \pgfsetstrokecolor{black}
  \pgfsetfillcolor{lightgray}
  \filldraw (303bp,503bp) ellipse (29bp and 30bp);
  \draw (303bp,503bp) node {\href{http://amadeus.inria.fr/gaubert}{main}};
\end{scope}
\begin{scope}
  \pgfsetstrokecolor{black}
  \draw (58bp,541bp) ellipse (57bp and 57bp);
  \draw (58bp,541bp) node {\href{http://www.cmap.polytechnique.fr}{lab}};
\end{scope}
\begin{scope}
  \pgfsetstrokecolor{black}
  \draw (571bp,616bp) ellipse (63bp and 63bp);
  \draw (571bp,616bp) node {\href{http://users.dsic.upv.es/~sas2008/accepted_papers.html}{other}};
\end{scope}
\begin{scope}
  \pgfsetstrokecolor{black}
  \draw (100bp,272bp) ellipse (44bp and 44bp);
  \draw (100bp,272bp) node {\href{http://www.cmap.polytechnique.fr/spip.php?rubrique103}{team}};
\end{scope}
\begin{scope}
  \pgfsetstrokecolor{black}
  \draw (196bp,107bp) ellipse (71bp and 71bp);
  \draw (196bp,107bp) node {\href{http://www-rocq.inria.fr/metalau/cohen}{friend2}};
\end{scope}
\begin{scope}
  \pgfsetstrokecolor{black}
  \draw (203bp,737bp) ellipse (67bp and 67bp);
  \draw (203bp,737bp) node {\href{http://www-rocq.inria.fr/metalau/quadrat}{friend1}};
\end{scope}
\end{tikzpicture}
}
}
  \caption{The web site of one of the authors (colored) and the surrounding sites (white). This 1500-page fragment of the web is aggregated for presentation,
using the technique described in~\cite{LanMey-Beyond}. 
The sizes of the circles follow the log of their PageRank. 
}
  \label{graph:base}
\end{figure}

\subsection{Optimization of PageRank}  \label{sec:var}

The problem we are interested in is the optimization of PageRank. 
We study two versions of this problem. 
In the continuous PageRank Optimization problem, the 
webmaster can choose the importance of the hyperlinks of the 
pages she controls and thus she has a continuum of admissible transition probabilities (determined for instance by selecting the color of a hyperlink,
the size of a font, or the position of a hyperlink in a page). 
This continuous model is specially 
useful in e-business applications, in which the income depends on the
effective frequency of visit of pages by the users, rather than on
its approximation provided by Google's pagerank.
The Continuous PageRank Optimization Problem is given by:
\begin{equation} \label{eqn:ideal}
  \max_{\pi,P} \left \{ U(\pi,P) \; \; ; \; \pi = \pi  P ,\quad \pi \in \Sigma_n, \quad P \in \mathcal{P} \right \}
\end{equation}
Here, $\Sigma_n:=\{x\in \R^n\mid x_i\geq 0,\forall i\in[n];\;\sum_{i\in[n]}x_i=1\}$ is
 the \NEW{simplex} of dimension $n$, $U$ is a~\NEW{utility} function and
$\mathcal{P}$ 
is a set representing the set of all admissible
 transition probability matrices. 
We denote by $P_{i,\cdot}$ the $i$th row of a~matrix $P$. We shall
distinguish \NEW{local constraints}, which can be expressed
as $P_{i,\cdot}\in \mathcal{P}_i$,
where $\mathcal{P}_i\subset \Sigma_n$ is a~given subset, and \NEW{global constraints},
which couple several vectors $P_{i,\cdot}$. Thus, local constraints only involve
the outlinks from a~single page, whereas global constraints involve
the outlinks from different pages.
We shall consider the situation in which each $\mathcal{P}_i$
is a polytope (or more generally 
an effective convex set).



If we restrict our attention to Google's PageRank (with uniform
transition probabilities), we arrive at the following
combinatorial optimization problem.
For each page $i$, as in~\cite{IshiiTempo-FragileDataPRcomp} and~\cite{Blondel-PRopt},
we partition the set of potential links $(i,j)$ into three subsets, consisting respectively of \NEW{obligatory links}~$\mathcal{O}_i$,
\NEW{prohibited links}~$\mathcal{I}_i$ and the set of \NEW{facultative links}~$\mathcal{F}_i$.
Then, for each page~$i$, we must select the subset $J_i$ of the set of
facultative links $\mathcal{F}_i$ which are effectively included
in this page. Once this choice is made for every page, we get
a new webgraph, and define the transition matrix $S=S(J_1,\ldots,J_n)$ as in%
~\eqref{e-def-S}. The matrix after teleportation is also
defined as above by
$P(J_1,\ldots,J_n):= \alpha S(J_1,\ldots,J_n)+(1-\alpha)ez$.
Then, the 
Discrete PageRank Optimization Problem is given by:
\begin{equation} \label{eqn:idealDisc} 
  \max_{\pi,P} \{ U(\pi, P) \; \; ; \; \pi = \pi  P ,\quad \pi \in \Sigma_n, \quad P=P(J_1,\ldots,J_n),\; J_i \subseteq \mathcal{F}_i, 
\; i\in [n]
 \} 
\end{equation} 

\begin{remark} \label{rem:combinatorialPb}
Problem~\eqref{eqn:idealDisc} is a combinatorial optimization problem: if there are $p_i$ facultative
links in page $i$, the decision variable, $(J_1,\ldots,J_n)$, takes 
$2^{p}$ values, where $p=p_1+\dots+p_n$.
\end{remark}

We shall be specially interested in the modeling of an income proportional to the frequency of clicks on some hyperlinks. 
Let $r_{i,j}$ be a~reward per click for each hyperlink $(i,j)$.
The latter utility can be represented by the following {\em linear utility function},
which gives the total income:
\begin{equation} \label{eqn:income}
 U(\pi,P)= \sum_{i\in[n]}\pi_i \sum_{j\in[n]} P_{i,j} r_{i,j}  \enspace. 
\end{equation}
Unless stated otherwise, we will consider the total income linear utility in the sequel.

\begin{remark}
The problem of maximizing the total PageRank of a~web site 
(sum of the PageRanks of its pages) is obtained as a~special case
of~\eqref{eqn:income}.
Indeed, if this web site consists of the subset of pages
$I \subseteq [n]$, one can set $r_{i,j}= \chi_I(i), \forall i,j \in [n]$,
where $\chi_I$ is the characteristic function of $I$ (with value $1$ if $i\in I$ and $0$ otherwise). Then
$U(\pi,P)= \sum_i \pi_i \sum_j P_{i,j} r_{i,j} = \sum_{i \in I} \pi_i$.
\end{remark}

\begin{remark} \label{rem:teleportPrice}
Note that the general form of the utility function assumes
that we receive the same instantaneous reward $r_{i,j}$ when the surfer follows the hyperlink $(i,j)$
 and when the surfer stops the current exploration at page $i$ to teleport to page $j$. There is no
 loss of generality in assuming that it is so: 
assume that the surfer produces a~reward
of $r'_{i,j}$ when he follows the hyperlink $(i,j)$ and $0$ when he teleports
to page $j$. 
Using the fact that $\sum_{j \in [n]} r'_{i,j} z_j = \sum_{j \in [n]} \sum_{l \in [n]} r'_{i,l} z_l P_{i,j}$ and $P=\alpha S + (1-\alpha) e z$,
we show that $\alpha \sum_{i,j \in [n]} r'_{i,j} \pi_i S_{i,j} = \sum_{i,j \in [n]} ( r'_{i,j} - (1-\alpha) \sum_{l \in [n]} r'_{i,l} z_l ) \pi_i P_{i,j}$.
 We then only need to set $r_{i,j}= r'_{i,j} - (1-\alpha) \sum_{l \in [n]} r'_{i,l} z_l$.
\end{remark}

We shall restrict our attention to situations in which $\pi$ is uniquely defined for each admissible transition matrix $P\in \mathcal{P}$ (recall that this is the case in particular when $\alpha<1$). Then the utility $U$ is a~function of $P$ only.

Alternatively, it will be convenient to think of the utility as a~function of
the \NEW{occupation measure} $\rho=(\rho_{i,j})_{i,j\in[n]}$.
The latter is the stationary distribution
of the Markov chain $(x_{t-1},x_{t})$. Thus, $\rho_{i,j}$ gives the
frequency of the move from page~$i$ to page~$j$.
The occupation measure $\rho$ is a~probability measure and it satisfies the flow relation, so that
\begin{equation}
\rho_{i,j} \geq 0,\; \forall i,j\in[n] \;,\qquad \sum_{i,j\in[n]} \rho_{i,j} =1\;,
\qquad
\sum_{k\in[n]} \rho_{k,i} = \sum_{j\in[n]} \rho_{i,j} ,\; \forall i\in [n]
\enspace .\label{e-flow}
\end{equation}
The occupation measure may also be thought of as a~matrix. Hence, we shall
say that $\rho$ is \NEW{irreducible} when the corresponding matrix is irreducible. 

The occupation measure $\rho$ can be obtained from the invariant measure
$\pi$ and the stochastic matrix $P$ by $\rho_{i,j} = \pi_i P_{i,j}, \forall i,j\in[n]$ and, conversely, the invariant measure $\pi$ can be recovered from $\rho$ by $\pi_i = \sum_{j\in [n]} \rho_{i,j}, \forall i\in[n]$.

The map $f$ which determines the stochastic matrix $P$ from
the occupation measure is given~by:
\begin{align}
P=f(\rho), \qquad
P_{i,j} &= \frac{\rho_{i,j}}{\sum_{k} \rho_{i,k}} ,\qquad \forall i,j\in[n]\label{e-def-f} \enspace .
\end{align}

\begin{prop} \label{prop:homeo}
The function $f$ defined by~\eqref{e-def-f} sets up a~birational transformation between the set of irreducible occupation measures 
(irreducible matrices satisfying~\eqref{e-flow}) 
 and the set of irreducible stochastic matrices.
In particular, the Jacobian of $f$ is invertible at any point of the set of irreducible occupation measures.
\end{prop}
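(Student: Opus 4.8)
The plan is to exhibit an explicit inverse of $f$ and to show that both $f$ and this inverse are regular rational maps between the two varieties, from which the birationality and the invertibility of the Jacobian both follow.

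First I would write down the candidate inverse. Given an irreducible stochastic matrix $P$, Perron--Frobenius theory provides a unique invariant probability vector $\pi=\pi(P)$ with $\pi_i>0$ for all $i$; set $g(P)_{i,j}:=\pi_i P_{i,j}$, which is exactly the relation $\rho_{i,j}=\pi_i P_{i,j}$ recalled before the statement. I would then verify the bookkeeping facts: (i) for an irreducible occupation measure $\rho$, the marginals $\pi_i:=\sum_k\rho_{i,k}$ are strictly positive (each row of an irreducible matrix is nonzero) and sum to $1$, so $f(\rho)$ is stochastic, and $\supp f(\rho)=\supp\rho$ forces $f(\rho)$ to be irreducible; (ii) $g(P)$ satisfies the flow relations \eqref{e-flow} and is irreducible, using $\sum_k \pi_k P_{k,i}=\pi_i=\sum_j \pi_i P_{i,j}$; (iii) $f\circ g=\mathrm{id}$ and $g\circ f=\mathrm{id}$, both one-line cancellations once one notes that $\sum_k g(P)_{i,k}=\pi_i$ and that the marginals of $\rho$ form its invariant measure. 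This makes $f$ a bijection with inverse $g$.

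Next I would address rationality. The map $f$ is patently rational in the entries $\rho_{i,j}$ by its defining formula \eqref{e-def-f}, and regular on the irreducible locus since there the denominators $\sum_k\rho_{i,k}=\pi_i$ do not vanish. For $g$ the only non-obvious point is that $\pi(P)$ depends rationally on $P$: I would obtain this from the Markov chain tree theorem (equivalently, Cramer's rule applied to the singular system $\pi(I-P)=0$, $\sum_i\pi_i=1$), which expresses each $\pi_i$ as a ratio of polynomials in the entries of $P$ with a common denominator, a sum of cofactors, that is strictly positive precisely when $P$ is irreducible. Hence $g$ is regular on the irreducible locus, and $f$ is a birational transformation.

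Finally, for the Jacobian I would first delimit the claim: both varieties have dimension $n^2-n$ (the affine space cut out by \eqref{e-flow} on one side, by the row-sum conditions on the other), and the naive $n^2\times n^2$ Jacobian of \eqref{e-def-f} is necessarily singular, since rescaling the $i$th row of $\rho$ leaves $P$ unchanged, so the assertion must concern the differential of $f$ as a map between these manifolds. Since $f$ and $g$ are mutually inverse maps that are regular, hence $C^\infty$, on the relevant open sets, differentiating $g\circ f=\mathrm{id}$ at $\rho$ gives $Dg(f(\rho))\circ Df(\rho)=\mathrm{Id}$ on the tangent space, which has dimension $n^2-n$; thus $Df(\rho)$ is injective, and since source and target tangent spaces both have dimension $n^2-n$, it is invertible. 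The main obstacle I anticipate is bookkeeping rather than conceptual: pinning down the correct tangent spaces (the linearized flow relations on the source, the zero-row-sum directions on the target) so that the chain-rule argument genuinely lives in $(n^2-n)$-dimensional spaces, and checking that the rational expression for $\pi(P)$ has a non-vanishing denominator throughout the irreducible locus.
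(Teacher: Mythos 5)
Your proposal is correct and takes essentially the same route as the paper: the paper's (far terser) proof also rests on the inverse $\rho_{i,j}=\pi_i(P)\,P_{i,j}$ being rational in $P$ --- citing Tutte's Matrix Tree Theorem where you invoke the equivalent Markov chain tree theorem / Cramer's rule --- and then deduces invertibility of the Jacobian directly from the birational character of $f$, which is precisely your chain-rule step on $g\circ f=\mathrm{id}$. Your additional care in restricting the differential to the $(n^2-n)$-dimensional tangent spaces (since the naive $n^2\times n^2$ Jacobian is singular under row rescaling) is a correct and welcome clarification of what the paper leaves implicit.
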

\begin{proof}
As $\pi$ is uniquely defined,
its entries are a~rational function of the entries of $P$
(for instance, when $P$ is irreducible, an~explicit rational expression is given by Tutte's Matrix Tree Theorem~\cite{Tutte}).
The invertibility of the Jacobian follows from the birational character of $f$.
\end{proof}

This bijective correspondence will allow us to consider the occupation measure,
rather than the stochastic matrix $P$, as the decision variable. 
Note that the utility function can be written as a linear function in terms of the 
occupation measure:
$ 
U(\pi,P)=\sum_{i,j\in[n]}\rho_{i,j} r_{i,j}
$. 

\subsection{Design constraints of the webmaster} \label{sec:cont}

We now model the possible modifications made by the webmaster, who may
be subject to constraints imposed by the designer of the web site
(the optimization of the PageRank should respect the primary
goal of the web site, which is in general to offer some content).
We thus describe the set $\mathcal{P}$ of admissible transition probabilities
of~\eqref{eqn:ideal}.

\begin{prop} \label{prop:convex+}
 Assume that $\mathcal{P} = \prod_{i \in [n]} \mathcal{P}_i$, that for all $i \in [n]$, $\mathcal{P}_i$ is a~closed convex and that every matrix $P \in \mathcal{P}$
is irreducible.
Then, the set $\mathcal{R}$ of occupation measures arising from the elements of $\mathcal{P}$
is also a~closed convex set. Moreover, if every $\mathcal{P}_i$ is a~polytope, then so is $\mathcal{R}$.
\end{prop}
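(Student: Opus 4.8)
The plan is to give $\mathcal{R}$ an explicit description as the intersection of the affine flow constraints~\eqref{e-flow} with one convex cone constraint per row, and to derive all three assertions from this description. First I would translate the membership $P\in\mathcal{P}$ into a condition on $\rho$. Writing $\pi_i=\sum_j \rho_{i,j}$, the correspondence~\eqref{e-def-f} shows that $P_{i,\cdot}\in\mathcal{P}_i$ is equivalent to $\rho_{i,\cdot}=\pi_i P_{i,\cdot}$ being a nonnegative multiple of an element of $\mathcal{P}_i$; since every vector of $\mathcal{P}_i\subseteq\Sigma_n$ is stochastic, the scalar $\pi_i$ is automatically recovered as the row sum $\sum_j\rho_{i,j}$. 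Hence I am led to introduce the set
\begin{equation*}
\tilde{\mathcal{R}}:=\Big\{\rho \text{ satisfying } \eqref{e-flow}\ :\ \rho_{i,\cdot}\in\mathrm{cone}(\mathcal{P}_i)\ \text{for all } i\in[n]\Big\},
\end{equation*}
where $\mathrm{cone}(\mathcal{P}_i)=\{\lambda q: \lambda\ge 0,\ q\in\mathcal{P}_i\}$, and the goal becomes to prove $\mathcal{R}=\tilde{\mathcal{R}}$.

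The inclusion $\mathcal{R}\subseteq\tilde{\mathcal{R}}$ is immediate, since any $\rho$ arising from $P\in\mathcal{P}$ satisfies~\eqref{e-flow} and has $\rho_{i,\cdot}=\pi_i P_{i,\cdot}$ with $\pi_i\ge 0$ and $P_{i,\cdot}\in\mathcal{P}_i$. For the converse, given $\rho\in\tilde{\mathcal{R}}$ with all row sums $\pi_i>0$, I would set $P:=f(\rho)$ as in~\eqref{e-def-f}; then $P_{i,\cdot}=\rho_{i,\cdot}/\pi_i\in\mathcal{P}_i$, so $P\in\mathcal{P}$, and the flow relation gives $(\pi P)_j=\sum_i \pi_i\,\rho_{i,j}/\pi_i=\sum_i\rho_{i,j}=\pi_j$, so that $\pi$ is the invariant measure of the irreducible matrix $P$ and $\rho\in\mathcal{R}$ by Proposition~\ref{prop:homeo}. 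The one delicate point---and the main obstacle---is to exclude degenerate elements of $\tilde{\mathcal{R}}$ with a vanishing row sum. I would argue by contradiction using irreducibility: if $B:=\{i:\pi_i=0\}$ were nonempty, then $\rho_{i,\cdot}=0$ for $i\in B$, and the flow relation forces the whole column $i$ to vanish for $i\in B$, hence $\rho_{k,i}=0$ and $P_{k,i}=0$ for every $k\notin B$; completing $P$ by choosing $P_{i,\cdot}\in\mathcal{P}_i$ arbitrarily for $i\in B$ produces a matrix $P\in\mathcal{P}$ whose block from $A:=[n]\setminus B$ to $B$ is zero, so $A$ is a nonempty closed class and $P$ is reducible---contradicting the hypothesis that every matrix of $\mathcal{P}$ is irreducible. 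Thus every $\rho\in\tilde{\mathcal{R}}$ has $\pi_i>0$ for all $i$, and $\mathcal{R}=\tilde{\mathcal{R}}$.

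It then remains to read off the structural properties from $\tilde{\mathcal{R}}$. The flow conditions~\eqref{e-flow} are linear (in)equalities, and $\mathrm{cone}(\mathcal{P}_i)$ is convex because the cone generated by a convex set is convex; being an intersection of convex sets, $\mathcal{R}$ is convex. For closedness, I would use that $\mathcal{P}_i$ is a closed subset of the compact simplex $\Sigma_n$, hence compact, and does not contain the origin (every stochastic vector has coordinate sum $1$); a standard argument then shows $\mathrm{cone}(\mathcal{P}_i)$ is closed, so $\mathcal{R}=\tilde{\mathcal{R}}$ is closed as well (alternatively, $\mathcal{R}$ is the continuous image under $f^{-1}$ of the compact set $\mathcal{P}$). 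Finally, when each $\mathcal{P}_i$ is a polytope, say $\mathcal{P}_i=\co\{q^{i,1},\dots,q^{i,m_i}\}$, the cone $\mathrm{cone}(\mathcal{P}_i)=\{\sum_k\lambda_k q^{i,k}:\lambda_k\ge 0\}$ is finitely generated, hence polyhedral; intersecting the polyhedral cone constraints with the polyhedron defined by~\eqref{e-flow} yields a polyhedron, which is bounded (it lies in the simplex $\sum_{i,j}\rho_{i,j}=1$) and therefore a polytope. The only genuinely nontrivial ingredient is the irreducibility argument ruling out degenerate rows; everything else is bookkeeping with cones and linear constraints.
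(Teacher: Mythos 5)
Your proof is correct, and at its core it is the same description of $\mathcal{R}$ as the paper's: the flow constraints~\eqref{e-flow} intersected, row by row, with the homogenization of $\mathcal{P}_i$. The difference is one of representation and of rigor. The paper works with the H-representation: each affine constraint $\sum_j a_{i,j}^{(l)} P_{i,j}\le b_i^{(l)}$ defining $\mathcal{P}_i$ is multiplied by the row sum to give the homogeneous inequality~\eqref{e-ineq-liftrho}, and $\mathcal{R}$ is then declared to be the resulting intersection of half-spaces; your $\mathrm{cone}(\mathcal{P}_i)$ is exactly the solution set of those lifted inequalities (together with nonnegativity), so the two descriptions coincide, with your V-representation making the polytope case a direct appeal to Minkowski--Weyl rather than to finiteness of the facet system. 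Where you genuinely go beyond the paper's proof is in establishing \emph{exactness} of the description: the paper only really argues the inclusion $\mathcal{R}\subseteq\tilde{\mathcal{R}}$ (every occupation measure satisfies the lifted constraints) and leaves implicit that every solution of the lifted system arises from some $P\in\mathcal{P}$ --- in particular it never excludes degenerate solutions with a vanishing row sum, for which the change of variables~\eqref{e-def-f} is undefined. Your contradiction argument --- a vanishing row forces, via~\eqref{e-flow}, a vanishing column, and completing $P$ arbitrarily on the dead rows exhibits a reducible element of $\mathcal{P}$, contradicting the standing irreducibility hypothesis --- is precisely the missing step, and it is the only place where irreducibility is actually used to identify $\mathcal{R}$ with the closed convex set being described. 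So your write-up can be read as the paper's proof with its one glossed point repaired; the paper's version is terser but, taken literally, proves only that $\mathcal{R}$ is contained in a closed convex set (respectively a polytope), not that it equals one.
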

\begin{proof}
 For all $i \in [n]$, $\mathcal{P}_i$ is a~closed convex set and so it is the intersection 
of a~possibly infinite family of hyperplanes $(H_i^{(l)})_{l \in L}$.
Every element $P$ of $\prod_{i \in [n]} \mathcal{P}_i$
must satisfy the following inequalities, 
one for each $H_i^{(l)}$:
\begin{equation}
  \sum_{j \in [n]} a_{i,j}^{(l)} P_{i,j} \leq b_i^{(l)}, \qquad \forall i \in [n], \forall l \in L
\label{e-affl}
\end{equation}
Formulating these equalities in terms of the~occupation measure $\rho$ thanks to $P_{i,j} = \frac{\rho_{i,j}}{\sum_{j'} \rho_{i,j'}}$ 
and Proposition~\ref{prop:homeo}, and rewriting Inequalities~\eqref{e-affl}
in the form
\begin{equation}
\sum_{j\in [n]} a_{i,j}^{(l)} \rho_{i,j} \leq b_i^{(l)} \sum_{k\in[n]} \rho_{i,k}, \qquad \forall i \in [n], \forall l \in L
\label{e-ineq-liftrho}
\end{equation}
we see that $\rho$ satisfies
a family of constraints of the form~\eqref{e-ineq-liftrho}, 
together with the inequalities~\eqref{e-flow}. 
Thus, $\sR$ is defined as the intersection of half-spaces and so, it is closed
and convex.

The same argument shows that if for all $i \in [n]$, $\mathcal{P}_i$ is a
polytope, so is $\mathcal{R}$.
\end{proof}

We next list some concrete examples of such inequalities.


\paragraph{Skeleton constraints}
 Imagine that a~designer gave a~\NEW{skeleton} or \NEW{template} for page $i$. The latter may include a~collection of mandatory sites to be pointed by page $i$. We shall abstract the skeleton by representing it by a~fixed probability vector $q\in \Sigma_n$, giving the transition probabilities if
no further hyperlinks are added. Assume now that the webmaster is allowed to modify the page for optimization purposes, as long as the hyperlinks she adds
do not overtake the initial content of the web site. This can be modeled
by requiring that no hyperlink included in the skeleton looses
a proportion of its weight greater than $\mu$.
Such constraints can be written as 
$ 
 P_{i,j} \geq \alpha (1-\mu) q_j + (1-\alpha) z_j, \; \forall j\in[n]
$. 





\paragraph{Linear coupling constraints} \label{para:linCoupl}
Constraints like the presence of specific outlinks \emph{somewhere} on the pages of  the website are non-local. Such constraints cannot be written
simply in terms of the stochastic matrix $P$ (because adding conditional probabilities relative to different pages makes little sense) but they can be written
linearly in terms of the occupation measure $\rho$,
$ 
\sum_{i,j\in[n]} a_{i,j} \rho_{i,j} \leq b
$, 
 where the coefficients $a_{i,j}$ and $b$ are given. 

These constraints include for instance {\em coupling conditional probability constraints},
which can be written as:
$\sum_{i \in I,j \in J} \rho_{i,j} \geq b \sum_{i \in I,k \in [n]} \rho_{i,k}$.
This means that the probability for the random surfer to move to set $J$,
given that he is now in set $I$, should not be smaller than $b$.



\paragraph{Combinatorial constraints} 
In the discrete problem, one may wish to set combinatorial constraints 
like demanding the existence of a~path between two pages or sets of pages~\cite{NinKer-PRopt}, 
 setting mutual exclusion between two hyperlinks~\cite{Blondel-PRopt} or 
limiting the number of hyperlinks~\cite{Blondel-PRopt}.  
Such constraints may lead to harder combinatorial problems, the solution
of which is however made easier by the polynomial-time solvability
of a relaxed continuous problem
(Section~\ref{sec:expCoupl}). 

\section{The polytope of uniform transition measures} \label{sec:polytope}

In this section, we show that the polytope of
uniform transition measures admits a concise representation
(Theorem~\ref{thm:nonEmptySet}).
The vertices of this polytope represent the action space of 
the Discrete PageRank Optimization problem~\eqref{eqn:idealDisc}. 
Theorem~\ref{thm:nonEmptySet} is a~key ingredient of the proof
of the polynomial time character of this problem 
which will be given in the next section.

We consider a~given page $i$ and we study the set of admissible transition probabilities
from page~$i$. With uniform transitions, this is a~discrete set
that we denote $\mathcal{D}_i$. 
For clarity of the explanation, we will write $x_j$ instead of $S_{i,j}$ and write
the proofs in the case $\alpha = 1$.
To get back to $\alpha < 1$, we use the relation $P_{i,j} = \alpha S_{i,j} + (1-\alpha) z_j$ (see Remark~\ref{rem:polytopeWithDamping} at the end of this section).

We partition the set of links from page~$i$ as the set of obligatory links $\mathcal{O}_i$,
the set of prohibited links $\mathcal{I}_i$ and the set of facultative links $\mathcal{F}_i$. 
Then, depending on the presence of obligatory links,
\ifthenelse{\boolean{arxiv}}{
\begin{multline} \label{eq:discSet}
\mathcal{D}_i= \{ q \in \Sigma^n \; | \; \mathcal{O}_i \subseteq \supp(q) \subseteq \mathcal{O}_i \cup \mathcal{F}_i, \; \\
q \text{ uniform probability measure on its support} \}
\end{multline}
}
{
\begin{equation} \label{eq:discSet}
\mathcal{D}_i= \{ q \in \Sigma^n \; | \; \mathcal{O}_i \subseteq \supp(q) \subseteq \mathcal{O}_i \cup \mathcal{F}_i, \;
q \text{ uniform probability measure on its support} \}
\end{equation}
}
or if $\mathcal{O}_i = \emptyset$, it is possible to have no link at all and then to teleport with probability vector~$Z$:
\ifthenelse{\boolean{arxiv}}{
\begin{multline*}
 \mathcal{D}_i= \{ q \in \Sigma^n \; | \; \supp(q) \subseteq \mathcal{F}_i, \; \\
q \text{ uniform probability measure on its support} \}\cup \{Z\} \enspace .
\end{multline*}
}
{$ 
\mathcal{D}_i= \{ q \in \Sigma^n \; | \; \supp(q) \subseteq \mathcal{F}_i, \;
q \text{ uniform probability measure on its support} \}\cup \{Z\}.
$ 
}

We study the polytope $\co(\mathcal{D}_i)$, 
 the convex hull of the discrete set $\mathcal{D}_i$.
Although it 
 is defined as the convex hull of an~exponential number of points, 
we show that it has a~concise representation.


\begin{thm} \label{thm:nonEmptySet}
 If page $i$ has at least one obligatory link, 
then the convex hull of the admissible discrete transition probabilities from page~$i$, $\co(\mathcal{D}_i)$, 
is the projective transformation of a~hypercube of dimension $\abs{\mathcal{F}_i}$
and, for any 
 choice of $j_0 \in \mathcal{O}_i$, it coincides with the polytope defined by the following set of inequalities: 
\begin{subequations} \label{eqn:notEmpty}
\begin{alignat}{4}
&\forall j \in \mathcal{I}_i \; , \;& &x_j =0 
& \qquad &\forall j \in \mathcal{F}_i \;, \;& &x_j \leq x_{j_0} \label{eqn:superior} \\
&\forall j \in \mathcal{O}_i \setminus \{j_0\} \;,\;& & x_j = x_{j_0}\ 
& \qquad &\forall j \in \mathcal{F}_i \;, \;& &x_j \geq 0 \label{eqn:inferior} \\
& & & &\sum_{j \in [n]} x_j =1 \label{eqn:simp1} & & &
\end{alignat}
\end{subequations}
\end{thm}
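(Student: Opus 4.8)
The plan is to identify the polytope $Q$ cut out by the inequalities~\eqref{eqn:notEmpty} with $\co(\mathcal{D}_i)$ by a double inclusion, the nontrivial half being handled through the projective transformation of the hypercube promised in the statement. Throughout, write $m:=\abs{\mathcal{O}_i}\geq 1$, and recall that $x_j$ stands for the transition weight $S_{i,j}$ and that $j_0\in\mathcal{O}_i$ is fixed.

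First I would prove $\co(\mathcal{D}_i)\subseteq Q$. Since $Q$ is convex it suffices to check that every generator of $\mathcal{D}_i$ lies in $Q$. A point of $\mathcal{D}_i$ is the uniform measure on $\mathcal{O}_i\cup J$ for some $J\subseteq\mathcal{F}_i$, that is $x_j=1/(m+\abs{J})$ for $j\in\mathcal{O}_i\cup J$ and $x_j=0$ otherwise; this plainly satisfies $x_j=0$ on $\mathcal{I}_i$, $x_j=x_{j_0}$ on $\mathcal{O}_i$, $0\leq x_j\leq x_{j_0}$ on $\mathcal{F}_i$, and $\sum_j x_j=1$, so it belongs to $Q$.

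For the converse I would introduce the map $\phi\colon[0,1]^{\mathcal{F}_i}\to Q$ sending $y$ to the vector $x$ with $x_j=1/(m+\sigma(y))$ for $j\in\mathcal{O}_i$, $x_j=y_j/(m+\sigma(y))$ for $j\in\mathcal{F}_i$, and $x_j=0$ for $j\in\mathcal{I}_i$, where $\sigma(y):=\sum_{k\in\mathcal{F}_i}y_k$. This is a projective (fractional-linear) transformation, well defined and smooth near the cube because its denominator satisfies $m+\sigma(y)\geq m>0$ there. One checks directly that $\phi(y)\in Q$, and conversely that any $x\in Q$ satisfies $x_{j_0}\geq 1/(m+\abs{\mathcal{F}_i})>0$ (from $1=\sum_j x_j\leq (m+\abs{\mathcal{F}_i})\,x_{j_0}$), so that $y_j:=x_j/x_{j_0}\in[0,1]$ is well defined and inverts $\phi$. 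Hence $\phi$ is a bijection of the hypercube $[0,1]^{\mathcal{F}_i}$ onto $Q$, which establishes the stated projective-transformation claim; moreover $\phi$ sends each cube vertex $\mathbf 1_J$ to the uniform measure on $\mathcal{O}_i\cup J$, i.e. bijectively onto $\mathcal{D}_i$.

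The crux is then to show that the vertices of $Q$ are exactly $\mathcal{D}_i$, after which $Q=\co(\text{vertices of }Q)=\co(\mathcal{D}_i)$ closes the argument. The delicate point is that $\phi$ is nonlinear, so one cannot simply transport convex combinations from the cube; what one uses instead is that a projective transformation whose exceptional hyperplane $\{m+\sigma=0\}$ avoids the cube restricts to a combinatorial isomorphism of polytopes, and in particular carries vertices to vertices, so the vertices of $Q$ are precisely $\phi(\{0,1\}^{\mathcal{F}_i})=\mathcal{D}_i$. If a self-contained verification is preferred, I would instead eliminate $x_{j_0}=(1-\sum_{\mathcal{F}_i}x_j)/m$ and view $Q$ in the coordinates $(x_j)_{j\in\mathcal{F}_i}$ as $\{\,x_j\geq 0,\ m x_j+\sum_k x_k\leq 1\,\}$, then analyze its vertices: writing the normals of the active constraints at a vertex ($e_j$ for each tight lower bound $x_j=0$ and $m e_j+\mathbf 1$ for each tight upper bound $x_j=x_{j_0}$), a rank count forces every facultative coordinate to be tight either at $0$ or at $x_{j_0}$, so the vertex is uniform on $\mathcal{O}_i\cup J$ with $J=\{j:x_j=x_{j_0}\}$ and thus lies in $\mathcal{D}_i$. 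Either way, the two inclusions combine to give $\co(\mathcal{D}_i)=Q$, which is the assertion.
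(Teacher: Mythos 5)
Your proof is correct, and it is organized differently from the paper's. The paper proves the identity $Q=\co(\mathcal{D}_i)$ first, by a direct saturation argument in the ambient coordinates: it checks $\mathcal{D}_i\subseteq Q$, observes that \eqref{eqn:inferior} and \eqref{eqn:superior} cannot be simultaneously tight at any $j\in\mathcal{F}_i$ (else $x_{j_0}=0$ forces $x=0$, contradicting \eqref{eqn:simp1}), counts the $1+\abs{\mathcal{I}_i}+\abs{\mathcal{O}_i}-1$ independent equalities to see that $Q$ has dimension $\abs{\mathcal{F}_i}$, and concludes that every extreme point saturates one of the two bounds at each facultative coordinate, hence takes only the values $0$ and $x_{j_0}$ and is uniform on its support; the projective-hypercube claim is then added on abstractly, by embedding the cube $\{X_j=0\ (j\in\mathcal{I}_i),\ X_j=1\ (j\in\mathcal{O}_i),\ 0\le X_j\le 1\ (j\in\mathcal{F}_i)\}$ in the hyperplane $\{X_{j_0}=1\}$, homogenizing, and taking the cross-section with $\{\sum_j x_j=1\}$, without ever writing the map. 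You invert this logic: you make the projective map the engine, writing $\phi$ and its inverse $y_j=x_j/x_{j_0}$ explicitly (your bound $x_{j_0}\ge 1/(m+\abs{\mathcal{F}_i})>0$ is exactly what guarantees the inverse is defined on all of $Q$, and it also rules out simultaneous tightness of the two bounds), and you deduce the vertex identification from the fact that a projective transformation whose exceptional hyperplane misses the polytope is a combinatorial isomorphism --- a standard fact from Section 2.6 of the Ziegler reference the paper itself cites, and consistent with the paper's remark after the theorem that $\co(\mathcal{D}_i)$ is combinatorially equivalent to a hypercube. What your route buys is an explicit, checkable parametrization of $Q$ by the cube (with $\mathbf 1_J\mapsto$ the uniform measure on $\mathcal{O}_i\cup J$, even giving a bijection between cube vertices and $\mathcal{D}_i$, slightly more than the paper's $\extr(\mathcal{S}_i)\subseteq\mathcal{D}_i$); what the paper's route buys is self-containedness, since it never needs the face-lattice invariance of projective maps. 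Note that your fallback argument --- eliminating $x_{j_0}=(1-\sum_{\mathcal{F}_i}x_j)/m$ and doing the rank count on normals $e_j$ and $me_j+\mathbf 1$ --- is essentially the paper's saturation argument transplanted to reduced coordinates (to complete it one should record, as you can from $x_{j_0}>0$, that no coordinate has both bounds tight, and verify independence of the selected normals, which is a one-line computation), so if you take that branch the two proofs substantially coincide.
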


\begin{proof}
Let $\mathcal{S}_i$ be the polytope defined by Inequalities~\eqref{eqn:notEmpty}. 

$(\mathcal{D}_i \subseteq \mathcal{S}_i)$:
Let $q$ a~probability vector in $\mathcal{D}_i$: $q$ is 
a uniform probability measure on its support and
$\mathcal{O}_i \subseteq \supp(q) \subseteq \mathcal{O}_i \cup \mathcal{F}_i$.
As for all $j$ in $\mathcal{F}_i$, $q_j \leq \frac{1}{\abs{\supp(q)}} = q_{j_0}$, $q$ verifies the equalities.

$(\extr(\mathcal{S}_i) \subseteq \mathcal{D}_i)$: 
 Let us consider an~extreme point $x$ of $\mathcal{S}_i$. Inequalities~\eqref{eqn:inferior} 
and~\eqref{eqn:superior} cannot be saturated together
at a~given coordinate $j \in \mathcal{F}_i$ because, if it were the case,
then we would have $x_{j_0} = 0$ and thus $x=0$, which contradicts $\sum_{j \in [n]} x_j =1$. 

We have $1+ \abs{\mathcal{I}_i} + \abs{\mathcal{O}_i}-1$ independent equalities so the polytope
is of dimension $\abs{\mathcal{F}_i}$. To be an~extreme point, $x$ must thus saturate
$\abs{\mathcal{F}_i}$ inequalities. At every $j$ in $\mathcal{F}_i$, Inequalities~\eqref{eqn:inferior}
 and~\eqref{eqn:superior} cannot be saturated simultaneously (see the previous paragraph),
so the only way to saturate $\abs{\mathcal{F}_i}$ inequalities is to saturate
one of~\eqref{eqn:inferior} or~\eqref{eqn:superior} at every $j$ in $\mathcal{F}_i$.
Finally, $x$ can only take two distinct values, which are $0$ and $x_{j_0} = \frac{1}{\abs{\supp(x)}}$:
it~is a~uniform probability on it support.

We then show that $\mathcal{S}_i$ is the projective transformation
(\cite{Ziegler-polytopes}, Section~2.6 for more background)
of the~hypercube $H$ defined by %
\ifthenelse{\boolean{arxiv}}{the following set of inequalities:
\[\{ \forall j \in \mathcal{I}_i, X_j = 0 \; ;\;
\forall j \in \mathcal{O}_i, X_j =1 \; ; \; \forall j \in \mathcal{F}_i, 
0 \leq X_j \leq 1 \} \enspace .
\]}
{$ 
\{ \forall j \in \mathcal{I}_i, X_j = 0 \; ;\;
\forall j \in \mathcal{O}_i, X_j =1 \; ; \; \forall j \in \mathcal{F}_i, 
0 \leq X_j \leq 1 \}
$.} 
As\ifthenelse{\boolean{arxiv}}{}{ soon as} $\mathcal{O}_i \not = \emptyset$, $H$ is embedded in the affine hyperplane 
$\{X \in \mathbb{R}^n | X_{j_0} = 1 \}$. We can then construct the homogenization of $H$,
$\mathrm{homog}(H)$, which is the pointed cone with base~$H$ (see~\cite{Ziegler-polytopes} for more details). Finally $\mathcal{S}_i$ is
the cross-section of $\mathrm{homog}(H)$ with the hyperplane $\{x \in \mathbb{R}^n | \sum_{j \in [n]} x_j = 1 \}$.
\end{proof}
The result of the theorem implies in particular that $\co(\mathcal{D}_i)$ is combinatorially equivalent to a hypercube, ie. that their face lattices are isomorphic~\cite{Ziegler-polytopes}.

\ifthenelse{\boolean{arxiv}}
{
\begin{figure}[thbp]
\centering
 \includegraphics[scale=0.33]{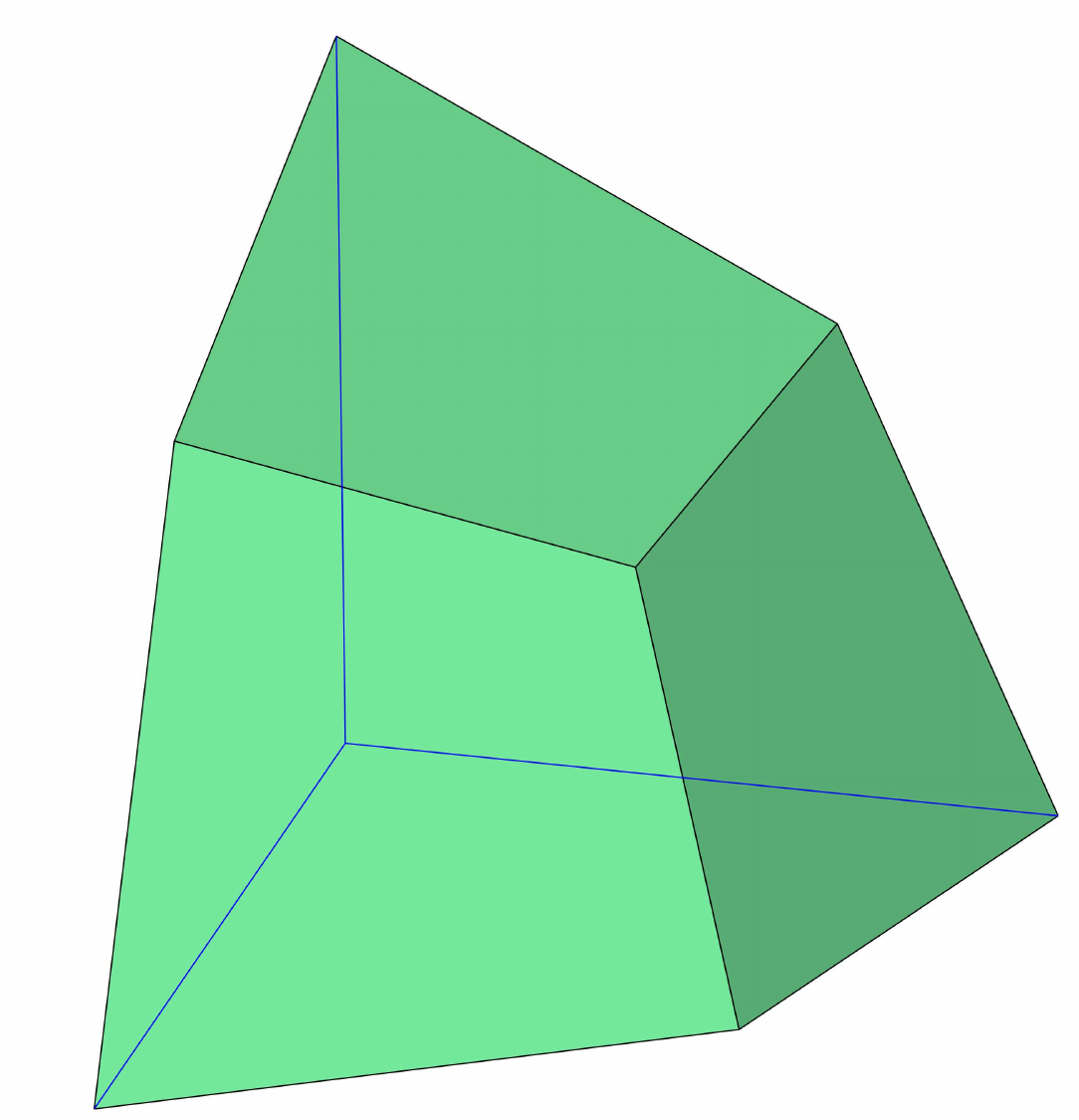}
\caption{Projection of the polytope of uniform transition measures with one obligatory link ($\abs{\mathcal{O}_i}=1$) and three facultative links ($\abs{\mathcal{F}_i}=3$).}
\label{draw:hypercube}
\end{figure}
}
{
}

The next result concerns the case in which a page may have no outlink:
it is necessary to consider this special case because then the websurfer teleports
with probability $Z_i$ to page $i$.
\begin{prop} \label{prop:emptySet}
 If page $i$ has no obligatory link 
and if 
there exists $k \in \mathcal{I}_i$ such that $Z_k > 0$,
 then $\co(\mathcal{D}_i)$ 
is a~simplex of dimension $\abs{\mathcal{F}_i}$ defined by the following set of inequalities: 
\begin{subequations} \label{eqn:empty}
\begin{align} 
& \sum_{j \in [n]} x_j =1 \;, & x_k \geq 0 \label{eqn:base} \\ 
&\forall j \in \mathcal{I}_i \setminus \{k\} \; ,\quad x_j = \frac{Z_j}{Z_k} x_k \;,  & 
\forall j \in \mathcal{F}_i \;,\quad& x_j \geq \frac{Z_j}{Z_k} x_k \label{eqn:lateralFaces} 
\end{align}
\end{subequations}
\end{prop}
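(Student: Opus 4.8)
The plan is to follow the same template as the proof of Theorem~\ref{thm:nonEmptySet}. Write $\mathcal{S}_i$ for the polytope defined by~\eqref{eqn:empty}, and recall that here $\mathcal{O}_i=\emptyset$, so $[n]=\mathcal{I}_i\sqcup\mathcal{F}_i$. I would first observe that $\mathcal{S}_i\subseteq\Sigma_n$: every defining relation forces $x_k\geq 0$, $x_j=\frac{Z_j}{Z_k}x_k\geq 0$ for $j\in\mathcal{I}_i$, and $x_j\geq\frac{Z_j}{Z_k}x_k\geq 0$ for $j\in\mathcal{F}_i$, while $\sum_j x_j=1$; hence $\mathcal{S}_i$ is bounded and equals the convex hull of its extreme points. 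The hypothesis that some $k\in\mathcal{I}_i$ has $Z_k>0$ is exactly what makes the ratios $Z_j/Z_k$ meaningful and, geometrically, what allows the teleportation vector $Z$ to leave the affine hull of the measures supported on $\mathcal{F}_i$. Establishing $\co(\mathcal{D}_i)=\mathcal{S}_i$ then reduces to the two inclusions $\mathcal{D}_i\subseteq\mathcal{S}_i$ and $\extr(\mathcal{S}_i)\subseteq\mathcal{D}_i$.

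For the first inclusion I would check the generators of $\mathcal{D}_i$ directly. A uniform measure $q$ on a nonempty $J\subseteq\mathcal{F}_i$ has $q_k=0$ since $k\in\mathcal{I}_i$, so $q_j=0=\frac{Z_j}{Z_k}q_k$ for $j\in\mathcal{I}_i$ and $q_j\geq 0=\frac{Z_j}{Z_k}q_k$ for $j\in\mathcal{F}_i$; and the vector $Z$ satisfies $Z_j=\frac{Z_j}{Z_k}Z_k$, turning every inequality into an equality. Thus $\mathcal{D}_i\subseteq\mathcal{S}_i$, and by convexity $\co(\mathcal{D}_i)\subseteq\mathcal{S}_i$.

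For the reverse inclusion I would enumerate the vertices of $\mathcal{S}_i$ by a saturation count. The equalities of~\eqref{eqn:empty}---the normalization together with the $\abs{\mathcal{I}_i}-1$ proportionality relations---are independent and confine $\mathcal{S}_i$ to an affine subspace of dimension $\abs{\mathcal{F}_i}$; the remaining candidate facets are $x_k\geq 0$ and $\{x_j\geq\frac{Z_j}{Z_k}x_k\}_{j\in\mathcal{F}_i}$, which number $\abs{\mathcal{F}_i}+1$. Let $x$ be an extreme point. If $x_k>0$, then $x_k\geq 0$ is not tight, so all $\abs{\mathcal{F}_i}$ inequalities over $\mathcal{F}_i$ must be saturated; together with the $\mathcal{I}_i$-equalities this gives $x=(x_k/Z_k)Z$, and $\sum_j x_j=1$ forces $x=Z\in\mathcal{D}_i$. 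If $x_k=0$, the proportionality relations give $x_j=0$ for all $j\in\mathcal{I}_i$, so $x$ is a probability vector on $\mathcal{F}_i$; extremality then requires $\abs{\mathcal{F}_i}-1$ of the inequalities $x_j\geq 0$ ($j\in\mathcal{F}_i$) to be tight, leaving a single nonzero coordinate, so that $x=e_{j_\ast}$ is the uniform measure on a singleton $\{j_\ast\}\subseteq\mathcal{F}_i$, again in $\mathcal{D}_i$. Hence $\extr(\mathcal{S}_i)\subseteq\{Z\}\cup\{e_j:j\in\mathcal{F}_i\}\subseteq\mathcal{D}_i$, which yields $\mathcal{S}_i=\co(\extr(\mathcal{S}_i))\subseteq\co(\mathcal{D}_i)$ and therefore equality.

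Finally, to see that $\co(\mathcal{D}_i)$ is a simplex of dimension $\abs{\mathcal{F}_i}$, I would note that its $\abs{\mathcal{F}_i}+1$ vertices $Z$ and $e_j$ ($j\in\mathcal{F}_i$) are affinely independent: the $e_j$ plainly are, and $Z$ lies off their affine span because $Z_k>0$ while every $e_j$ vanishes at coordinate $k\in\mathcal{I}_i$. The step demanding the most care is the vertex enumeration: one must get the dimension count right so that each vertex is pinned down by precisely $\abs{\mathcal{F}_i}$ tight facets, and must use $Z_k>0$ exactly where $Z$ is separated from the face $\co\{e_j:j\in\mathcal{F}_i\}$---the geometric reason the dimension is $\abs{\mathcal{F}_i}$ rather than $\abs{\mathcal{F}_i}-1$.
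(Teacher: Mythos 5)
Your proof is correct and follows exactly the route the paper intends: the published proof of Proposition~\ref{prop:emptySet} simply says to repeat the argument of Theorem~\ref{thm:nonEmptySet} (the two inclusions $\mathcal{D}_i\subseteq\mathcal{S}_i$ and $\extr(\mathcal{S}_i)\subseteq\mathcal{D}_i$ via a saturation count), which is what you carry out, adapted to Inequalities~\eqref{eqn:empty}. Your explicit identification of the vertex set as $\{Z\}\cup\{e_j : j\in\mathcal{F}_i\}$ and the affine-independence argument using $Z_k>0$ are exactly the details the paper leaves implicit (and they are consistent with the paper's remark that uniform measures on supports of size at least two are not extreme points).
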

\begin{proof}
%
%
%
%
%
The proof follows the same sequence of arguments as the proof of Theorem~\ref{thm:nonEmptySet}.
We just need to adapt it to Inequalities~\eqref{eqn:empty}.
\end{proof}

\begin{prop} \label{prop:emptySet2}
 If page $i$ has no obligatory link 
and if 
for all $k \in \mathcal{I}_i$, $Z_k = 0$,
 then $\co(\mathcal{D}_i)$ 
 is the usual simplex of dimension $\abs{\mathcal{F}_i}-1$ with $x_j = 0$, $\forall j \in \mathcal{I}_i$.
\end{prop}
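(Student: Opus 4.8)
The plan is to prove the claimed equality by a direct double inclusion, bypassing the projective-transformation machinery of Theorem~\ref{thm:nonEmptySet}, since in this degenerate case the target set is already a standard simplex. Write $T$ for the usual simplex of dimension $\abs{\mathcal{F}_i}-1$ in question, namely
\[
T := \{ x \in \R^n \mid x_j = 0 \ \forall j \in \mathcal{I}_i,\ x_j \geq 0 \ \forall j \in \mathcal{F}_i,\ \textstyle\sum_{j\in[n]} x_j = 1 \}.
\]
Since $\mathcal{O}_i = \emptyset$, the index sets $\mathcal{I}_i$ and $\mathcal{F}_i$ partition $[n]$, so $T$ is precisely the set of nonnegative stochastic vectors supported on $\mathcal{F}_i$, i.e.\ the convex hull of the unit vectors $\{e_j : j \in \mathcal{F}_i\}$. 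These $\abs{\mathcal{F}_i}$ points are affinely independent, which already accounts for the asserted dimension $\abs{\mathcal{F}_i}-1$. It then remains to establish $\co(\mathcal{D}_i) = T$.

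For the inclusion $\co(\mathcal{D}_i) \subseteq T$, because $T$ is convex it suffices to check $\mathcal{D}_i \subseteq T$. An element of $\mathcal{D}_i$ is either a uniform probability measure $q$ supported on some nonempty subset of $\mathcal{F}_i$, in which case $q_j = 0$ for $j \in \mathcal{I}_i$ and $q \in T$ trivially, or it is the teleportation vector $Z$. The role of the hypothesis ``$Z_k = 0$ for all $k \in \mathcal{I}_i$'' is exactly to place $Z$ inside $T$: this assumption forces $\supp(Z) \subseteq [n]\setminus \mathcal{I}_i = \mathcal{F}_i$, and since $Z$ is stochastic we obtain $Z \in T$. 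Hence $\mathcal{D}_i \subseteq T$, and convexity gives $\co(\mathcal{D}_i)\subseteq T$.

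For the reverse inclusion $T \subseteq \co(\mathcal{D}_i)$, I would observe that each vertex $e_j$ of $T$ (for $j\in\mathcal{F}_i$) is the uniform probability measure on the singleton $\{j\}$, whose support $\{j\}\subseteq\mathcal{F}_i$; hence $e_j \in \mathcal{D}_i$. As $T = \co(\{e_j : j\in\mathcal{F}_i\})$, convexity yields $T \subseteq \co(\mathcal{D}_i)$, and combining the two inclusions gives $\co(\mathcal{D}_i)=T$. There is no serious obstacle in this argument; the only point requiring care is the treatment of $Z$, and indeed the entire content of the proposition is that the hypothesis $Z_k=0$ on $\mathcal{I}_i$ prevents the teleportation point from sticking out of the simplex spanned by the facultative singletons, so that $Z$ contributes no new extreme point and $\co(\mathcal{D}_i)$ collapses to the standard simplex.
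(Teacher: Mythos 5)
Your proof is correct and is essentially the same argument as the paper's own (very terse) proof, which just observes that the extreme points of the simplex are admissible discrete transition probabilities and that the simplex contains every element of $\mathcal{D}_i$ --- i.e., exactly your double inclusion. Your write-up merely fills in the details left implicit there, in particular the key point that the hypothesis $Z_k=0$ for all $k\in\mathcal{I}_i$ places the teleportation vector $Z$ inside the simplex so that it creates no extra vertex of $\co(\mathcal{D}_i)$.
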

\begin{proof}
The extreme points of this simplex are clearly admissible discrete transition probabilities
and the polytope contains every admissible discrete transition probabilities.
\end{proof}

\begin{remark}
When there is no obligatory link, 
 most of the admissible discrete transition probabilities
are not extreme points of the polytope.
\end{remark}
\begin{remark} \label{rem:polytopeWithDamping}
 If we want to work with $\mathcal{P}_i$, the polytope of transition probabilities 
with damping factor $\alpha$, we only need\ifthenelse{\boolean{arxiv}}{}{ to use} the relation $\mathcal{P}_i = \alpha \mathcal{S}_i + (1-\alpha)z$
to get the actual inequalities.
For instance, $x_j = x_{j_0}$ remains but $x_j \geq 0$ becomes $x_j \geq (1-\alpha)z_j$. 
\end{remark}

\section{Solving the PageRank Optimization Problem with local constraints}
\label{sec:solvelocal}

\subsection{Reduction of the PageRank Optimization Problem with local constraints to Ergodic Control}
\label{sec:progdyn}

We next show 
that the continuous and discrete versions of the PageRank optimization 
reduce to ergodic control problems in which the action sets are defined as extreme points 
of concisely described polyhedra.

A finite {\em Markov decision process} is a $4$-uple $(I,(A_i)_{i\in I},p,r)$
where $I$ is a finite set called the {\em state space}; for all $i \in I$, $A_i$ is the finite set of {\em admissible actions} in state $i$;
$p: I\times \cup_{i\in I} (\{i\} \times A_i) \to \mathbb{R}_+$ is the {\em transition law}, so that $p(j|i,a)$ is the probability to go to state $j$ form state $i$ when
action $a \in A_i$ is selected; and $r: \cup_{i\in I}( \{i\} \times A_i) \to \mathbb{R}$ is the {\em reward function}, so that $r(i,a)$ is the instantaneous reward
when action $a$ is selected in state $i$.

Let $X_t\in I$ denote the state of the system at the discrete time $t\geq 0$.
A {\em deterministic control strategy} $\nu$ is a~sequence of actions $(\nu_t)_{t\geq 0}$
such that for all $t \geq 0$, $\nu_t$ is a~function of the history $h_t=(X_0, \nu_0, \ldots, X_{t-1}, \nu_{t-1}, X_t)$ and $\nu_t \in A_{X_t}$.
Of course, $\mathbb{P}( X_{t+1}=j | X_t , \nu_t ) = p(j|X_t, \nu_t) , \forall j \in [n], \forall t \geq 0$.
More generally, we may consider {\em randomized} strategies $\nu$ where $\nu_t$ is a probability measure on $A_{X_t}$.
A strategy $\nu$ is {\em stationary} (feedback) if there exists a function $\bar{\nu}$
such that for all $t\geq 0$, $\nu_t(h_t) = \bar{\nu}(X_t)$.

Given an initial distribution $\mu$ representing the law of $X_0$, the average cost infinite horizon Markov decision problem, also called {\em ergodic control problem},  consists in maximizing 
\begin{equation}\label{eqn:ergo}
\liminf_{T \rightarrow +\infty} \frac{1}{T}
 \mathbb{E}(\sum_{t=0}^{T-1} r(X_t,  \nu_t))
\end{equation}
where the maximum is taken 
over the set of randomized control strategies $\nu$.
Indeed, the supremum is the same if it is taken only over the set 
of randomized (or even deterministic) stationary feedback strategies (Theorem~9.1.8 in~\cite{puterman-mdp} for instance). 

A Markov decision process is {\em unichain} if the transition matrix corresponding to every
stationary policy has a single recurrent class. Otherwise it is {\em multichain}. 
When the problem is unichain, its value does not depend on the initial distribution whereas
when it is not, one may consider a vector $(g_i)_{i \in I}$ where $g_i$ represents the value of the problem~\eqref{eqn:ergo} when starting from state $i$. 


\begin{prop}\label{prop-local}
If there are only local constraints, ie. $\mathcal{P}=\prod_{i \in [n]} \mathcal{P}_i$, if for all $i\in [n]$, $\mathcal{P}_i$ is a polytope and if the utility function is an~income proportional
to the frequency of clicks~(\ref{eqn:income}), then the
continuous PageRank Optimization problem~\eqref{eqn:ideal}
is equivalent to the unichain ergodic control problem with finite state $[n]$, finite action set $\extr(\mathcal{P}_i)$ in every state $i$, transition probabilities $p(j|i,a)=a_j$ and
rewards $r(i,a)=\sum_{j\in [n]} r_{i,j} a_j$.
 \end{prop}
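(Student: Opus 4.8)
The plan is to set up a correspondence between the randomized stationary feedback strategies of the proposed Markov decision process and the admissible transition matrices $P \in \mathcal{P}$, and to verify that under this correspondence the ergodic reward~\eqref{eqn:ergo} coincides with the PageRank utility $U(\pi,P)$. Since the ergodic control value is attained over stationary strategies (Theorem~9.1.8 in~\cite{puterman-mdp}, already invoked for~\eqref{eqn:ergo}), matching the two objectives strategy-by-strategy will yield the equivalence.

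First I would describe the transition matrix induced by a stationary strategy. A randomized stationary feedback strategy selects, in each state $i$, a probability distribution $\lambda^{(i)}$ over the finite action set $\extr(\mathcal{P}_i)$; since $p(j|i,a)=a_j$, the induced row of transition probabilities is $\sum_{a \in \extr(\mathcal{P}_i)} \lambda^{(i)}_a\, a$. As $\lambda^{(i)}$ ranges over all distributions on $\extr(\mathcal{P}_i)$, this convex combination ranges over $\co(\extr(\mathcal{P}_i))$, which equals $\mathcal{P}_i$ because a polytope is the convex hull of its extreme points (Minkowski's theorem). Hence the set of transition matrices induced by stationary strategies is exactly $\prod_{i\in[n]}\mathcal{P}_i=\mathcal{P}$, and conversely every $P\in\mathcal{P}$ arises in this way, its $i$th row being expressed as some $\lambda^{(i)}$-combination of extreme points.

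Next I would match the objective. Because the reward $r(i,a)=\sum_{j\in[n]} r_{i,j}a_j$ is \emph{linear} in the action $a$, the expected one-step reward collected in state $i$ under $\lambda^{(i)}$ is $\sum_{a}\lambda^{(i)}_a\, r(i,a)=\sum_{j\in[n]} r_{i,j}P_{i,j}$, precisely the inner sum of~\eqref{eqn:income}. The damping guarantees $P_{i,j}\geq(1-\alpha)z_j>0$, so every $P\in\mathcal{P}$ is irreducible (indeed primitive); thus the induced chain has a single recurrent class, the process is unichain, and the average reward~\eqref{eqn:ergo} is independent of the initial distribution and equals $\sum_{i\in[n]}\pi_i\sum_{j\in[n]} r_{i,j}P_{i,j}=U(\pi,P)$, where $\pi$ is the unique invariant measure of $P$.

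Finally I would conclude: combining the bijection of the second paragraph with the objective identity of the third, and using that the optimum in~\eqref{eqn:ergo} is achieved by a stationary strategy, the optimal ergodic reward equals $\max_{P\in\mathcal{P}}U(\pi,P)$, which is the value of~\eqref{eqn:ideal}, with optimal strategies in bijection with optimal matrices $P$. The two delicate points are (i) that randomizing over the extreme points recovers \emph{all} of $\mathcal{P}_i$, which is exactly where the hypothesis that $\mathcal{P}_i$ is a polytope is used, and (ii) that the linearity of the reward together with the averaging against $\pi$ reassembles $U$ without error; the irreducibility coming from the damping is what makes the chain unichain and $\pi$ unique, so that ``the value of~\eqref{eqn:ideal}'' and ``the ergodic value'' are literally the same number.
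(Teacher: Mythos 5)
Your proof is correct and takes essentially the same route as the paper's: both identify randomized stationary feedback strategies with transition matrices whose rows lie in $\mathcal{P}_i=\co(\extr(\mathcal{P}_i))$, use the damping ($\alpha<1$, $z_j>0$) to get strictly positive transition probabilities and hence the unichain property, and invoke the ergodic theorem together with the sufficiency of stationary strategies (Theorem~9.1.8 in~\cite{puterman-mdp}) to match the average reward with the total income utility $\sum_{i,j}\pi_i P_{i,j}r_{i,j}$. Your explicit appeals to Minkowski's theorem and to the linearity of $r(i,a)$ in $a$ merely spell out steps the paper leaves implicit.
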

%
\begin{proof}
As $\alpha<1$, $a \in \mathcal{P}_i$ implies $a_k>0$ for all $k$. Thus the problem defined in the proposition is unichain.
Randomized stationary strategies 
are of the form $\nu_t=\bar{\nu}(X_t)$ for some
function $\bar{\nu}$ sending $i\in[n]$ to some element of $\mathcal{P}_i=\co(\extr(\mathcal{P}_i))$.
To such a~strategy is associated a~transition matrix $P$ of the websurfer, 
obtained by taking $P_{i,\cdot}=\bar{\nu}(i)$ and vice versa.
Thus, the admissible transition matrices of the websurfer
are admissible stationary feedback strategies.

Moreover, the ergodic theorem for Markov chains shows that when such
a strategy is applied,
\ifthenelse{\boolean{arxiv}}
{\begin{align*}
\lim_{T\to\infty} \frac{1}{T} \mathbb{E}(\sum_{t=0}^{T-1} r(X_t,\nu_t)) = \lim_{T\to\infty} \frac{1}{T} \mathbb{E}(\sum_{t=0}^{T-1} \sum_{j\in [n]} r_{X_t,j} \bar{\nu}_j(X_t)) =
\sum_{i,j\in[n]} \pi_i P_{i,j}r_{i,j}
 \end{align*}}
{$ 
\lim_{T\to\infty} \frac{1}{T} \mathbb{E}(\sum_{t=0}^{T-1} r(X_t,\nu_t)) = \lim_{T\to\infty} \frac{1}{T} \mathbb{E}(\sum_{t=0}^{T-1} \sum_{j\in [n]} r_{X_t,j} \bar{\nu}_j(X_t)) =
\sum_{i,j\in[n]} \pi_i P_{i,j}r_{i,j}
$} 
and so, the objective function of the ergodic control problem is precisely
the total income.
\end{proof}

\begin{prop} \label{prop:progdynprinc}
The following dynamic programming equation:
\begin{equation}\label{eqn:progdyn}
  w_i+\psi = \max_{\nu \in \mathcal{P}_i} \nu (r_{i,\cdot} + w) \enspace,
\qquad \forall i\in [n]
\end{equation}
has a~solution $w \in \mathbb{R}^n$ and $\psi \in \mathbb{R}$. The constant $\psi$ is unique and is the value of the
PageRank Optimization problem~(\ref{eqn:ideal}). An optimal strategy is obtained by selecting
for each state $i$ a~maximizing $\nu \in \mathcal{P}_i$ in~\eqref{eqn:progdyn}.
The function $w$ is often called the \NEW{bias} or the \NEW{potential}.
\end{prop}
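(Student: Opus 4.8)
The plan is to derive this directly from Proposition~\ref{prop-local}, which identifies the continuous PageRank Optimization problem~\eqref{eqn:ideal} with a unichain ergodic control problem on the state space $[n]$, and then to apply the classical average-reward optimality theory for unichain Markov decision processes. First I would observe that~\eqref{eqn:progdyn} is exactly the average-reward optimality (Bellman) equation of the process produced by Proposition~\ref{prop-local}: inside the bracket, $\nu \cdot r_{i,\cdot}$ is the reward $r(i,a)=\sum_j r_{i,j}a_j$ and $\nu \cdot w = \sum_j p(j|i,a)\,w_j$ with $p(j|i,a)=a_j$. The one point requiring care is that~\eqref{eqn:progdyn} maximizes over the whole polytope $\mathcal{P}_i$, whereas the MDP of Proposition~\ref{prop-local} has the finite action set $\extr(\mathcal{P}_i)$; but for fixed $w$ the map $\nu \mapsto \nu(r_{i,\cdot}+w)$ is affine, so its maximum over the convex polytope $\mathcal{P}_i$ is attained at an extreme point. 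Hence $\max_{\nu\in\mathcal{P}_i}\nu(r_{i,\cdot}+w)=\max_{a\in\extr(\mathcal{P}_i)}a(r_{i,\cdot}+w)$, and~\eqref{eqn:progdyn} coincides with the finite-action optimality equation.

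Next I would establish existence of a solution $(w,\psi)$. Since $\alpha<1$ and $z_j>0$, every admissible row satisfies $\nu_j\geq(1-\alpha)z_j>0$, so every stationary policy induces a primitive transition matrix and the process is unichain. I would then invoke the classical existence theorem for the unichain average-reward optimality equation (see~\cite{puterman-mdp}), which furnishes $(w,\psi)$ with $\psi$ equal to the optimal gain, independent of the initial state. A self-contained alternative, which also anticipates Proposition~\ref{prop:contrrate}, is to note that the uniform minorization $\nu_j\geq(1-\alpha)z_j$ makes the dynamic programming operator $w\mapsto\bigl(\max_{\nu\in\mathcal{P}_i}\nu(r_{i,\cdot}+w)\bigr)_{i\in[n]}$ a contraction in the span (oscillation) seminorm, with modulus at most $\alpha$; a fixed point modulo an additive constant then yields $(w,\psi)$ directly.

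Finally I would identify $\psi$ and prove uniqueness and optimality. Because the process is unichain, the gain is constant in the initial state, and the optimality theorem identifies this common value with $\psi$; by Proposition~\ref{prop-local} it is precisely the value of~\eqref{eqn:ideal}, which gives uniqueness of $\psi$. The potential $w$ is determined only up to an additive constant: replacing $w$ by $w+ce$ leaves~\eqref{eqn:progdyn} invariant because each $\nu$ is a probability vector, so $\nu\cdot e=1$. Optimality of the greedy policy, obtained by selecting in each state $i$ a maximizer $\nu\in\mathcal{P}_i$, follows from the standard verification argument: iterating~\eqref{eqn:progdyn} along the controlled chain shows that the induced stationary strategy attains the gain $\psi$, while the same inequality, applied with an arbitrary action, shows no strategy can exceed $\psi$.

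The main obstacle is the existence of the pair $(w,\psi)$: the average-reward optimality equation need not admit a solution for a general multichain MDP, where one is forced to couple it with a second equation for the gain. Here the difficulty dissolves because of the unichain structure, which is in turn guaranteed by the strict positivity $\nu_j\geq(1-\alpha)z_j>0$ coming from the damping factor. Verifying this structural fact is the one step that must precede the appeal to the standard theory.
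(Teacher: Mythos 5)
Your proposal is correct and follows essentially the same route as the paper's proof: apply the classical unichain average-reward theorem (Theorem~8.4.3 of~\cite{puterman-mdp}) to the ergodic control problem of Proposition~\ref{prop-local}, then pass from $\extr(\mathcal{P}_i)$ to $\mathcal{P}_i$ via the affinity of $\nu \mapsto \nu(r_{i,\cdot}+w)$. Your extra material (the unichain check via $\nu_j \geq (1-\alpha)z_j > 0$, the span-seminorm contraction alternative, and the verification argument) is sound but simply makes explicit what the paper delegates to Proposition~\ref{prop-local} and the cited theorem.
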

\begin{proof}
Theorem~8.4.3 in~\cite{puterman-mdp}
 applied to the unichain ergodic control problem of Proposition~\ref{prop-local}
implies the result of the proposition but with 
$\mathcal{P}_i$ replaced by 
$\extr(\mathcal{P}_i)$. 
But as
the expression which is maximized is affine,
using $\mathcal{P}_i$ or $\extr(\mathcal{P}_i)$ yields the same solution.
\end{proof}


\begin{thm} \label{thm:reducDiscCont}
 The discrete PageRank Optimization problem~\eqref{eqn:idealDisc} is equivalent
to a~continuous PageRank Optimization problem~\eqref{eqn:ideal} in which
the action set $\mathcal{P}_i$ is defined by one of the polytopes described in Theorem~\ref{thm:nonEmptySet}
or Proposition~\ref{prop:emptySet} or~\ref{prop:emptySet2}, depending on the presence of
obligatory links.
\end{thm}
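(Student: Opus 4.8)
The plan is to read both problems as one and the same ergodic control problem, differing only in the action sets: the discrete problem~\eqref{eqn:idealDisc} uses, in each state $i$, the finite set of (damped) uniform measures $\mathcal{D}_i$, while the continuous problem~\eqref{eqn:ideal} with the prescribed $\mathcal{P}_i$ uses its convex hull. Because the income utility~\eqref{eqn:income} is linear in the occupation measure (equivalently, affine in each row of $P$), passing from $\mathcal{D}_i$ to $\co(\mathcal{D}_i)$ cannot increase the value, so the two problems coincide. I would organize the argument as a dictionary between discrete strategies and rows, an identification of the polytopes, and a value comparison via the dynamic programming equation.

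First I would make the correspondence between discrete strategies and transition rows explicit. Selecting $J_i\subseteq\mathcal{F}_i$ fixes the support $\mathcal{O}_i\cup J_i$ of the $i$th row of $S(J_1,\ldots,J_n)$, and by~\eqref{e-def-S} that row is the uniform measure on this support, i.e.\ exactly an element of $\mathcal{D}_i$ (the alternative $\{Z\}$ covering the case $\mathcal{O}_i=\emptyset$, $J_i=\emptyset$). Conversely every $q\in\mathcal{D}_i$ arises this way via $J_i=\supp(q)\cap\mathcal{F}_i$. Applying the damping transformation $P_{i,\cdot}=\alpha S_{i,\cdot}+(1-\alpha)z$ row by row, the admissible rows of the matrices $P(J_1,\ldots,J_n)$ feasible for~\eqref{eqn:idealDisc} are precisely those in $\alpha\mathcal{D}_i+(1-\alpha)z$; hence~\eqref{eqn:idealDisc} is the problem of maximizing $U$ over all $P$ with $P_{i,\cdot}\in\alpha\mathcal{D}_i+(1-\alpha)z$.

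Next I would invoke Theorem~\ref{thm:nonEmptySet} (resp.\ Proposition~\ref{prop:emptySet} or~\ref{prop:emptySet2}, according to whether $\mathcal{O}_i\neq\emptyset$ and whether some $k\in\mathcal{I}_i$ has $Z_k>0$) together with Remark~\ref{rem:polytopeWithDamping}. These identify the prescribed polytope $\mathcal{P}_i$ with $\co(\alpha\mathcal{D}_i+(1-\alpha)z)=\alpha\co(\mathcal{D}_i)+(1-\alpha)z$, and their proofs yield the key inclusion $\extr(\mathcal{P}_i)\subseteq\alpha\mathcal{D}_i+(1-\alpha)z$ (each vertex of $\mathcal{P}_i$ is a damped uniform measure). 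Since $\alpha<1$ and $z_j>0$, any $P$ with $P_{i,\cdot}\in\mathcal{P}_i$ satisfies $P_{i,j}\geq(1-\alpha)z_j>0$ and is irreducible, so the hypotheses of Proposition~\ref{prop-local} hold and the continuous problem~\eqref{eqn:ideal} with $\mathcal{P}=\prod_i\mathcal{P}_i$ is a well-posed unichain ergodic control problem.

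Finally I would compare the two values. Because $\alpha\mathcal{D}_i+(1-\alpha)z\subseteq\mathcal{P}_i$, the discrete problem is a restriction of the continuous one, so its value is at most that of~\eqref{eqn:ideal}. For the reverse inequality I would use Proposition~\ref{prop:progdynprinc}: the dynamic programming equation~\eqref{eqn:progdyn} admits an optimal stationary strategy, and since $\nu\mapsto\nu(r_{i,\cdot}+w)$ is affine, its maximum over the polytope $\mathcal{P}_i$ is attained at an extreme point, hence at an element of $\alpha\mathcal{D}_i+(1-\alpha)z$. Choosing such a maximizer in each state gives an optimal strategy all of whose rows are damped uniform measures, which is a feasible point of~\eqref{eqn:idealDisc} achieving the continuous value; the two problems therefore share the same value and a common discrete optimizer, as claimed. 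The step I expect to be the most delicate is applying the affine-reward/extreme-point argument to the correct object: one must use that $\extr(\mathcal{P}_i)$ is contained in the damped discrete set (the content of the $\extr(\mathcal{S}_i)\subseteq\mathcal{D}_i$ direction in the proof of Theorem~\ref{thm:nonEmptySet}), together with the linearity of~\eqref{eqn:income} in $\rho$ guaranteed by Proposition~\ref{prop:homeo}, so that no optimality is lost when passing between $\mathcal{P}_i$ and $\extr(\mathcal{P}_i)$.
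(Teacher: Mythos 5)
Your proposal is correct and follows essentially the same route as the paper: both reduce the two problems to ergodic control problems whose action sets are $\mathcal{D}_i$ and its convex hull (identified with $\mathcal{P}_i$ via Theorem~\ref{thm:nonEmptySet}, Propositions~\ref{prop:emptySet}--\ref{prop:emptySet2} and the damping relation), and both close the argument by noting that the affine maximand in the dynamic programming equation~\eqref{eqn:progdyn} attains its maximum at an extreme point of the polytope, which belongs to the damped discrete set, so no value is lost and a discrete optimizer is recovered. Your write-up is merely more explicit than the paper's on the dictionary between subsets $J_i\subseteq\mathcal{F}_i$ and rows of $\mathcal{D}_i$ and on the inclusion $\extr(\mathcal{P}_i)\subseteq\alpha\mathcal{D}_i+(1-\alpha)z$, which the paper invokes implicitly through its remark that extreme points of $\co(\mathcal{D}_i)$ lie in $\mathcal{D}_i$.
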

\begin{proof}
Arguing as in the proof of Proposition~\ref{prop-local}, we get that the 
discrete PageRank Optimization problem~\eqref{eqn:idealDisc} is equivalent to an ergodic
control problem with state space $[n]$, in which the action set in 
state $i$ is the discrete set $\mathcal{D}_i$ defined in~\eqref{eq:discSet}, and the rewards and transition
probabilities are as in this proposition. 
The optimal solutions of the discrete PageRank Optimization problem
coincide with the optimal stationary deterministic strategies.
The analog of Equation~\eqref{eqn:progdyn} is now
\begin{equation}
  w_i+\psi = \max_{\nu \in \co(\mathcal{D}_i)} \nu (r_{i,\cdot} + w) 
\label{e-w-psi}
\end{equation}
where $\co(\mathcal{D}_i)$ is the convex hull of the set $\mathcal{D}_i$, i.e the polytope
described in either Theorem~\ref{thm:nonEmptySet}
or Proposition~\ref{prop:emptySet} or~\ref{prop:emptySet2}. The polytope $\co(\mathcal{D}_i)$ gives the transition laws in state $i$ corresponding to randomized strategies in the former problem. Hence, the control problems in which the actions sets are $\mathcal{D}_i$ or $\co(\mathcal{D}_i)$ have the same value. 
Moreover, an optimal strategy of the problem with the latter
set of actions can be found by solving~\eqref{e-w-psi} and selecting
a maximizing action $\nu$ in~\eqref{e-w-psi}. Such an action may
always be chosen in the set of extreme points of $\co(\mathcal{D}_i)$
and these extreme points belong to $\mathcal{D}_i$ (beware however
that some points of $\mathcal{D}_i$ may be not extreme). 
\end{proof}

\subsection{Polynomial time solvability of well-described Markov decision problems}

We have reduced the discrete and continuous PageRank Optimization problems
to ergodic control problems in which the action sets are {\em implicitly defined} as the sets of extreme points of polytopes. Theorem~1 in~\cite{Papadimitriou-complMDP} states that
the ergodic control problem is solvable in polynomial time.
However, in this result, the action sets are defined {\em explicitly},
whereas polynomial means, as usual, polynomial in the input length (number of bits of the input). Since the input includes
the description of the actions sets, 
the input length is always larger than the sum of the cardinalities of the
action sets. 
Hence, this result only leads to
an exponential bound in our case (Remark~\ref{rem:combinatorialPb}).

However, we next establish a general result, Theorem~\ref{thm:polytime+} below,
showing that the polynomial time solvability of ergodic control problems subsists when the action sets are implicitly defined.
This is based on the combinatorial developments of the theory of Khachiyan's ellipsoid method, by Groetschel, Lov\'asz and Schrijver~\cite{Lovasz-geomAlgo}.
We refer the reader to the latter monograph for more background on
the notions of strong separation oracles and well described polyhedra.

\begin{defn}[Def.\ 6.2.2 of \cite{Lovasz-geomAlgo}]  \label{defn:welldescribed}
We say that a polyhedron $\mathcal{B}$ has {\em facet-complexity} at most~$\phi$ if there exists
a~system of inequalities with rational coefficients that has solution set~$\mathcal{B}$ and
such that the encoding length of each inequality of the system (the sum of the number of bits of the rational numbers appearing as coefficients
in this inequality) is at most~$\phi$. 

A {\em well-described polyhedron} is a~triple $(\mathcal{B};n,\phi)$ where $\mathcal{B} \in \mathbb{R}^n$
is a~polyhedron with facet-complexity at most $\phi$. The {\em encoding length} of $\mathcal{B}$ is by definition $n+\phi$.
\end{defn}
\begin{defn}[Problem (2.1.4) of \cite{Lovasz-geomAlgo}]
A~{\em strong separation oracle} for a set $K$ is an~algorithm that solves the
following problem: given a~vector~$y$, 
decide whether $y \in K$ or not and if not, find a~hyperplane
that separates $y$ from~$K$; 
i.e.,
find a~vector~$c$ such
that $c^T y > \max \{c^T x, x\in K\}$.
%
\end{defn}

Inspired by Definition~\ref{defn:welldescribed}, we introduce the following notion.
\begin{defn} \label{defn:welldescribedMDP}
A finite Markov decision process $(I, (A_i)_{i\in I}, p, r)$ is {\em well-described} 
if for every state $i \in I$, we have $A_i \subset \mathbb{R}^{L_i}$
for some $L_i \in \mathbb{N}$, 
if there exists $\phi \in \mathbb{N}$
such that  the convex hull of every action set $A_i$ is a~well-described polyhedron $(\mathcal{B}_i; L_i, \phi)$
with a~polynomial time strong separation oracle, and if
the rewards and transition probabilities satisfy
$r(i,a)=\sum_{l \in [L_i]} a_l R_{i}^l$
and
$p( j | i , a) = \sum_{l \in [L_i]} a_l Q_{i,j}^l$, $\forall i,j \in I$, 
$\forall a\in  A_i$,
where $R_{i}^l$ and $Q_{i,j}^l$
are given rational numbers,
for $i,j\in I$ and $l\in [L_i]$.

The {\em encoding length} of a well-described Markov decision process is by definition the sum of the encoding lengths of the rational numbers $Q_{i,j}^l$ and $R_{i}^l$ and of the well-described polyhedra~$\mathcal{B}_i$.
\end{defn}

The situation in which the action spaces are given as usual in extension (by listing the actions) corresponds to the case in which $A_i$ is
the set of extreme points of a simplex $\Sigma_{L_i}$. 
The interest of Definition~\ref{defn:welldescribedMDP} is that it applies
to more general situations in which the actions are not listed, but
given implicitly by a computer program
deciding whether a given element of $\mathbb{R}^{L_i}$ is an admissible 
action in state $i$ (the separation oracle).
An example of such a separation oracle stems from
Theorem~\ref{thm:nonEmptySet}: here, a potential (randomized) action is an element of $\mathbb{R}^n$, and to check whether it is admissible, it suffices to check whether one
of the inequalities in~\eqref{eqn:notEmpty} is not satisfied.

\begin{thm} \label{thm:polytime+}
The average cost infinite horizon problem for a well-described (multichain) Markov decision process can be solved in a time
polynomial in the input length.
\end{thm}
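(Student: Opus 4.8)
The plan is to reduce the ergodic control problem to a linear program over the polytope of occupation measures, and then show that this linear program, although it may have exponentially many variables (one per action), can be solved in polynomial time by the ellipsoid method because we have a polynomial-time separation oracle for the feasible region. This is the standard dual-LP formulation of average-cost Markov decision processes, combined with the machinery of \cite{Lovasz-geomAlgo} (Theorem~6.4.9 there, on optimizing a linear function over a well-described polyhedron given by a separation oracle).

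\textbf{Step 1: The LP over occupation measures.} First I would write down the classical linear programming formulation of the multichain average-cost problem. The decision variables are occupation (state-action frequency) measures $x_{i,a}$ together with the deviation/bias variables, subject to the balance (flow) constraints
\begin{equation*}
\sum_{a \in A_i} x_{i,a} - \sum_{j \in I}\sum_{a \in A_j} p(i \mid j,a)\, x_{j,a} = 0, \qquad \forall i \in I,
\end{equation*}
together with a normalization and the complementary constraints handling transient states in the multichain case (see Chapter~9 of \cite{puterman-mdp}). The objective to maximize is $\sum_{i \in I} \sum_{a \in A_i} r(i,a)\, x_{i,a}$. Because $r(i,a)$ and $p(j\mid i,a)$ are, by Definition~\ref{defn:welldescribedMDP}, \emph{linear} in the action vector $a$, these sums factor through $R_i^l$ and $Q_{i,j}^l$, so it is equivalent to work with the aggregated variables $y_{i,l} = \sum_{a \in A_i} x_{i,a}\, a_l$ and the mass $m_i = \sum_{a \in A_i} x_{i,a}$. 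The key point is that the admissible pairs $(m_i, y_{i,\cdot})$ range exactly over the cone generated by $\{(1,a) : a \in \co(\mathcal{B}_i)\}$, i.e.\ over $m_i \cdot \co(\mathcal{B}_i)$.

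\textbf{Step 2: Separation oracle for the LP.} The feasible region of the LP in the aggregated variables is cut out by the polynomially many balance/normalization equalities together with the constraints $(m_i, y_{i,\cdot}) \in m_i\,\co(\mathcal{B}_i)$ for each $i$. Given the polynomial-time strong separation oracle for each $\co(\mathcal{B}_i) = \mathcal{B}_i$, I can build a strong separation oracle for this homogenized (conic) constraint, hence for the whole feasible polyhedron, in polynomial time: to separate a candidate point, check each linear equality directly, and for each $i$ query the oracle for $\mathcal{B}_i$ on $y_{i,\cdot}/m_i$ (handling $m_i = 0$ separately by scaling). Once a polynomial-time separation oracle and a bound on the facet-complexity of the feasible polyhedron are in hand, Theorem~6.4.9 of \cite{Lovasz-geomAlgo} lets me optimize the linear objective in time polynomial in the encoding length and in the oracle running time.

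\textbf{Step 3: Recovering the value and a policy.} Finally I would argue that the optimal LP value equals the optimal average reward (this is the standard LP duality for multichain MDPs, Theorem~9.3 of \cite{puterman-mdp}), and that an optimal stationary policy is recovered from an optimal \emph{vertex} solution: the ellipsoid method together with the separation oracle produces such a vertex (again via \cite{Lovasz-geomAlgo}), from which the actions $a$ are read off by $a_l = y_{i,l}/m_i$ on the recurrent states, these $a$ lying in $\mathcal{B}_i$ as required. \textbf{The main obstacle} I anticipate is bookkeeping rather than conceptual: I must verify that the facet-complexity of the aggregated feasible polyhedron is polynomially bounded in terms of the encoding length of the well-described MDP (so that the guarantees of \cite{Lovasz-geomAlgo} apply), and I must treat the multichain subtleties — transient states contribute zero occupation mass, so the policy on those states needs the auxiliary bias/deviation variables of the multichain LP, and I must confirm the separation oracle construction extends cleanly to that enlarged system.
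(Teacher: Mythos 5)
Your proposal is correct in outline, but it takes the dual route relative to the paper's own proof. The paper works on the value side: it introduces the polyhedron $\mathcal{G}$ of pairs $(v,g)$ satisfying $g_i \geq \sum_{j,l} a_l Q_{i,j}^l g_j$ and $v_i + g_i \geq \sum_l a_l R_i^l + \sum_{j,l} a_l Q_{i,j}^l v_j$ for all $i$ and all $a \in A_i$ --- a program with only $2\abs{I}$ variables but exponentially many constraints --- and minimizes $\sum_j g_j$ over it (Theorem~9.3.8 of Puterman). Its separation oracle is built by solving, for each state $i$, the two linear programs $\max_{a \in \co(A_i)}(\cdots)$, each polynomial-time solvable from the separation oracle for $\mathcal{B}_i$ via the ellipsoid machinery; a constraint with positive value directly yields the cutting plane. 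Optimal strategies are then extracted from a \emph{dual} optimal solution (Theorem~6.5.14 of Gr\"otschel--Lov\'asz--Schrijver), and the facet complexity of $\mathcal{G}$ is bounded by passing to the extreme points of $\mathcal{B}_i$ (Lemma~6.2.4 there). Your aggregated LP in the variables $(m_i, y_{i,\cdot})$ is precisely the dual of the paper's program, so the two proofs are mirror images: where the paper must \emph{optimize} over each $\mathcal{B}_i$ to separate over $\mathcal{G}$, you must \emph{separate} over the homogenization of each $\mathcal{B}_i$; where the paper recovers the policy from the dual, you read it off the primal vertex (and you correctly note the multichain subtlety that transient states, having $m_i=0$, get their decision from the auxiliary block of variables). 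What your route buys is direct policy recovery without invoking Theorem~6.5.14; what it costs is the conic separation detail you flagged, which needs one genuine fix: ``handling $m_i=0$ by scaling'' does not literally work. If $m_i=0$ and $y_i \neq 0$, take $c = y_i$ and $d = \max_{x \in \mathcal{B}_i} c^T x$; the inequality $c^T y \leq d\,m$ is valid for the cone (since $\mathcal{B}_i$ is a bounded polytope, its homogenization meets $\{m=0\}$ only at the origin) and is violated at $(0,y_i)$. Similarly, for $m_i>0$ the oracle queried at $y_i/m_i$ returns only a vector $c$ with $c^T(y_i/m_i) > \max_{\mathcal{B}_i} c^T x$, and to output a hyperplane separating from the \emph{cone} you must additionally compute $d = \max_{\mathcal{B}_i} c^T x$. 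Both maxima are optimization calls, available in polynomial time from the separation oracle by Theorem~6.4.9, so the oracle for your feasible region costs polynomially many nested ellipsoid runs --- exactly as in the paper's proof. With this fix and the facet-complexity bookkeeping you already identify (facets of the cone are $c^T y \leq d\,m$ for facets $c^T x \leq d$ of $\mathcal{B}_i$, so the encoding length at most roughly doubles), your argument goes through and establishes the same theorem.
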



\begin{proof}
We shall use the notations of Definition~\ref{defn:welldescribedMDP}. Consider the polyhedron $\mathcal{G}$ consisting of the couples of vectors 
$(v,g)\in \R^I \times \R^I$ satisfying the constraints
\begin{equation} \label{exponentialLP}
\begin{split}
g_i \geq \sum_{j \in I} \sum_{l \in [L_i]} a_l Q_{i,j}^l g_j \;, &\quad \forall i \in I, a \in A_i \\
v_i + g_i \geq \sum_{l \in [L_i]} a_l R_i^l +  \sum_{j \in I}  \sum_{l \in [L_i]} &a_l Q_{i,j}^l  v_j \;, \quad \forall i \in I, a \in A_i \enspace.
\end{split}
\end{equation}
Theorem~9.3.8 in~\cite{puterman-mdp} implies that
the average cost problem
reduces to minimizing the linear form $(v,g) \mapsto \sum_{j\in I} g_j$
over $\mathcal{G}$.
Every optimal solution $(v,g)$ of this linear program is such that $g_j$ is the 
optimal mean payment per time unit starting from state~$j$. 
We recover optimal strategies of the ergodic problem through dual optimal solution of the linear program.

By Theorem~6.4.9 in~\cite{Lovasz-geomAlgo}, we know that a linear program
over a well-described polyhedron 
with a polynomial time strong separation oracle
is polynomial time solvable. 
Moreover, Theorem~6.5.14 in~\cite{Lovasz-geomAlgo}
asserts that we can find a dual optimal solution in polynomial time.

Let us construct such an oracle for $\mathcal{G}$.
Given a~point $(g,v) \in \mathbb{Q}^n \times \mathbb{Q}^n$,
compute for all $i \in I$: 
$\max_{a \in \co(A_i)} \sum_{l \in [L_i]} a_l (\sum_{j \in I} Q_{i,j}^l g_j) - g_i$ and
$\max_{a \in \co(A_i)} \sum_{l \in [L_i]} a_l (R_i^l +  \sum_{j \in I} Q_{i,j}^l v_j) - v_i - g_i $.
Those problems are linear problems such that, by hypothesis, we have a~polynomial time strong separation
oracle for each of the well-described polyhedral admissible sets $\mathcal{B}_i=\co(A_i)$.
Thus they are polynomial time solvable.
If the $2n$ linear programs return a~nonpositive value,
then this means that $(g,v)$ is an~admissible point of~\eqref{exponentialLP}.
Otherwise, the solution $a$ of any of those linear programs that have a negative
value yields a strict inequality
$g_i < \sum_{j \in I} \sum_{l \in [L_i]} a_l Q_{i,j}^l g_j$ or $v_i + g_i < \sum_{l \in [L_i]} a_l R_i^l +  \sum_{j \in I}  \sum_{l \in [L_i]} a_l Q_{i,j}^l  v_j$.
In both cases, the corresponding inequality determines a~separating hyperplane.

To conclude the proof, it remains to check that the facet complexity
of the polyhedron $\mathcal{G}$
is polynomially bounded in the encoding lengths of the polyhedra $\mathcal{B}_i$ and the rationals $R_i^l$ and~$Q_{i,j}^l$. Since the $a_l$'s appear linearly in the constraints~\eqref{exponentialLP}, these constraints hold for all $a\in A_i$ if and only if they
hold for all $a\in \mathcal{B}_i$ or equivalently, for all extreme points of~$\mathcal{B}_i$. 
The result follows from
Lemma~6.2.4 in~\cite{Lovasz-geomAlgo}, which states that the encoding length of any extreme point
of a~well-described polyhedron 
is polynomially bounded in the encoding of the polyhedron.
\end{proof}
\begin{remark}
 This argument also shows that the discounted problem is polynomial time solvable.
\end{remark}

 As a~consequence of Theorems~\ref{thm:reducDiscCont} and~\ref{thm:polytime+}, we get
\begin{thm} \label{thm:polytime}
If there are only local constraints, if the utility function is a~rational total income utility~\eqref{eqn:income}
and if the teleportation vector and damping factor are rational,
then 
the discrete problem~\eqref{eqn:idealDisc} can be solved in polynomial time
and the continuous problem~\eqref{eqn:ideal} with well-described action sets (Definition~\ref{defn:welldescribed}) can also
be solved in polynomial time. 
\end{thm}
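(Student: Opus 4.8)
The plan is to derive Theorem~\ref{thm:polytime} as a direct corollary of the two main building blocks already assembled in the excerpt: the reduction of the PageRank optimization problems to ergodic control (Theorem~\ref{thm:reducDiscCont} together with Proposition~\ref{prop-local}), and the polynomial-time solvability of well-described Markov decision processes (Theorem~\ref{thm:polytime+}). The only real work is to verify that the ergodic control problems produced by the reduction fit the definition of a \emph{well-described} Markov decision process (Definition~\ref{defn:welldescribedMDP}), so that Theorem~\ref{thm:polytime+} applies; the polynomial bound then follows immediately.

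First I would treat the discrete problem~\eqref{eqn:idealDisc}. By Theorem~\ref{thm:reducDiscCont} it is equivalent to a continuous problem~\eqref{eqn:ideal} whose action set $\mathcal{P}_i$ in state $i$ is $\co(\mathcal{D}_i)$, one of the three concisely described polytopes of Theorem~\ref{thm:nonEmptySet}, Proposition~\ref{prop:emptySet}, or Proposition~\ref{prop:emptySet2}, depending on the presence of obligatory links and on the support of $Z$. The key point is that each of these polytopes is given by an \emph{explicit} system of $O(n)$ linear inequalities whose coefficients are $0$, $1$, or the ratios $Z_j/Z_k$ (after incorporating the damping factor via Remark~\ref{rem:polytopeWithDamping}, the bounds involve $\alpha$ and $z_j$). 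Since the teleportation vector, the damping factor, and the rewards are rational by hypothesis, all these coefficients are rational with encoding length polynomial in the input, so each $\co(\mathcal{D}_i)$ is a well-described polyhedron $(\mathcal{B}_i; n, \phi)$ for a common polynomial $\phi$. A polynomial-time strong separation oracle is obtained by simply scanning the inequalities~\eqref{eqn:notEmpty} (or~\eqref{eqn:empty}) and returning a violated one as the separating hyperplane, exactly as described after Definition~\ref{defn:welldescribedMDP}.

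Next I would check the remaining requirements of Definition~\ref{defn:welldescribedMDP}: the transition probabilities $p(j\mid i,a)=a_j$ and the rewards $r(i,a)=\sum_{j\in[n]} r_{i,j}\,a_j$ are, as established in Proposition~\ref{prop-local}, linear in the action $a$ with rational coefficients $Q_{i,j}^l$ and $R_i^l$ (here $L_i=n$). Thus the ergodic control problem arising from the discrete problem is a well-described Markov decision process, and Theorem~\ref{thm:polytime+} yields a solution in time polynomial in the encoding length. For the continuous problem~\eqref{eqn:ideal} the argument is the same, with the hypothesis that the action sets $\mathcal{P}_i$ are themselves well-described (i.e.\ given with facet-complexity at most $\phi$ and a polynomial-time separation oracle) substituting for the explicit description supplied by Theorem~\ref{thm:nonEmptySet} in the discrete case; Proposition~\ref{prop-local} again guarantees the linear form of $p$ and $r$.

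I expect the main obstacle to be purely bookkeeping rather than conceptual: one must confirm that the facet-complexity $\phi$ of the polyhedra $\co(\mathcal{D}_i)$, once the damping correction of Remark~\ref{rem:polytopeWithDamping} is applied, stays polynomially bounded in the encoding of $\alpha$, $z$, and $Z$, and that the rationals $r_{i,j}$ entering $R_i^l$ do likewise. There is no genuine difficulty here because all transformations are affine with rational coefficients, but care is needed to state the reduction uniformly across the three cases (obligatory link present, no obligatory link with some $Z_k>0$, no obligatory link with $Z$ vanishing on $\mathcal{I}_i$) so that a single $\phi$ works for all states simultaneously. Once this is in place, the theorem follows by invoking Theorem~\ref{thm:polytime+}.
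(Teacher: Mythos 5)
Your proposal is correct and follows essentially the same route as the paper's proof: reduce the discrete problem to the continuous one via Theorem~\ref{thm:reducDiscCont}, observe that the polytopes of Section~\ref{sec:polytope} (after the damping correction of Remark~\ref{rem:polytopeWithDamping}) are well-described with rational coefficients of polynomial encoding length and admit trivial inequality-scanning separation oracles, and then apply Theorem~\ref{thm:polytime+} through Proposition~\ref{prop-local} with $L_i=n$, $Q_{i,j}^l=\delta_{jl}$ and $R_i^l=r_{i,l}$. The one small point the paper makes explicit that you elide is that, to output an actual discrete strategy (a choice of link subsets, i.e.\ an extreme point) rather than a randomized one, one must be able to compute a \emph{vertex} optimal solution of the relevant linear programs, which is possible in polynomial time by Lemma~6.5.1 of~\cite{Lovasz-geomAlgo}.
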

\begin{proof}
Thanks to Theorem~\ref{thm:reducDiscCont}, solving the continuous PageRank
Optimization problem also solves the discrete PageRank Optimization problem.
In addition, the coefficients appearing in the description of the facets of the polytopes
of uniform transition measures
are either $1$, $z_j$ or $\alpha$ and there are at most two
terms by inequality (cf Section~\ref{sec:polytope}). This implies that these polytopes are well-described
with an encoding length polynomial in the length of the input.
Note also that we can find in polynomial time a vertex optimal solution of a linear program as soon
as its feasible set is a polytope as it is the case here
(Lemma~6.5.1 in~\cite{Lovasz-geomAlgo}).

By Proposition~\ref{prop-local}, the ergodic control problem associated to a~continuous PageRank
Optimization problem with well-described action sets satisfies the conditions of Theorem~\ref{thm:polytime+}
with $I=[n]$, $L_i=[n]$, $Q_{i,j}^l = \delta_{jl}$ and $R_i^l=r_{i,l}$ for $i,j \in [n], l \in L_i$.
Thus it is polynomial time solvable.
\end{proof}
Theorem~\ref{thm:polytime+} is mostly of theoretical interest, since
its proof is based on the ellipsoid algorithm, which is slow. We however
give in Section~\ref{subsec-value} a fast scalable algorithm for the present problem.

\begin{example}
Consider again the graph from Figure~\ref{graph:base},
and let us optimize the sum of the PageRank scores of the pages of the site
(colored). Assume that there are only local skeleton constraints (see Section~\ref{sec:cont}): each page can change up to 20 \% of the initial transition probabilities. 
The result is represented in Figure~\ref{graph:local}. 

\begin{figure}[thbp]
\centering
\ifthenelse{\boolean{notikz}}
{
\includegraphics[width=23em]{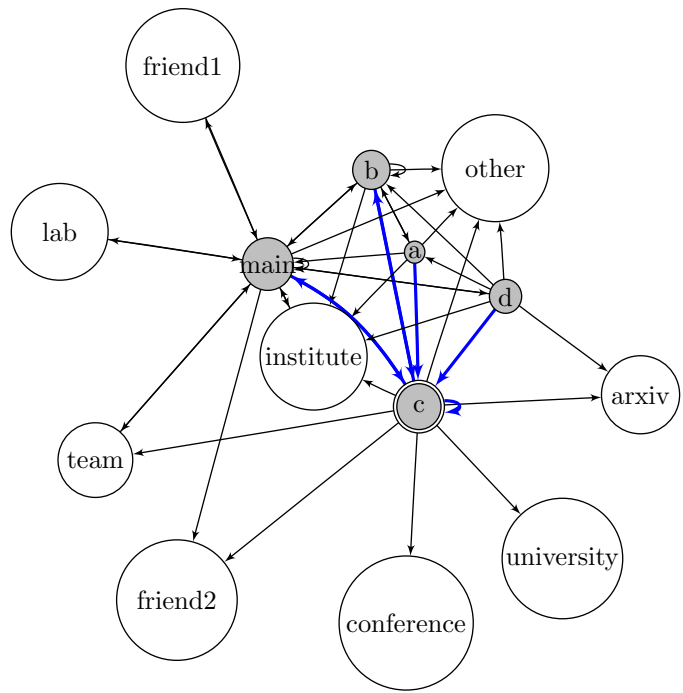}
}
{
\small{
\begin{tikzpicture}[>=latex', join=bevel, scale=0.32]
\pgfsetlinewidth{0.5bp}
\pgfsetcolor{black}
  \pgfsetcolor{blue}
  \draw [->,very thick] (429bp,592bp) .. controls (439bp,545bp) and (461bp,435bp)  .. (475bp,365bp);
  \pgfsetcolor{black}
  \draw [->] (599bp,454bp) .. controls (621bp,437bp) and (664bp,406bp)  .. (704bp,376bp);
  \draw [->] (333bp,510bp) .. controls (343bp,510bp) and (351bp,508bp)  .. (351bp,503bp) .. controls (351bp,500bp) and (348bp,498bp)  .. (333bp,496bp);
  \draw [->] (464bp,516bp) .. controls (439bp,514bp) and (382bp,510bp)  .. (333bp,505bp);
  \draw [->] (333bp,499bp) .. controls (387bp,492bp) and (500bp,476bp)  .. (564bp,468bp);
  \pgfsetcolor{blue}
  \draw [->,very thick] (475bp,365bp) .. controls (464bp,418bp) and (442bp,528bp)  .. (429bp,592bp);
  \pgfsetcolor{black}
  \draw [->] (129bp,305bp) .. controls (169bp,350bp) and (238bp,429bp)  .. (283bp,480bp);
  \draw [->] (291bp,531bp) .. controls (277bp,564bp) and (252bp,620bp)  .. (229bp,675bp);
  \draw [->] (467bp,508bp) .. controls (454bp,495bp) and (430bp,471bp)  .. (401bp,440bp);
  \draw [->] (569bp,478bp) .. controls (542bp,504bp) and (483bp,560bp)  .. (442bp,598bp);
  \draw [->] (502bp,313bp) .. controls (525bp,289bp) and (562bp,249bp)  .. (601bp,208bp);
  \pgfsetcolor{blue}
  \draw [->,very thick] (330bp,489bp) .. controls (351bp,478bp) and (380bp,461bp)  .. (402bp,440bp) .. controls (425bp,419bp) and (445bp,392bp)  .. (465bp,361bp);
  \pgfsetcolor{black}
  \draw [->] (409bp,599bp) .. controls (389bp,582bp) and (356bp,552bp)  .. (325bp,523bp);
  \draw [->] (511bp,337bp) .. controls (554bp,339bp) and (632bp,343bp)  .. (696bp,346bp);
  \draw [->] (331bp,515bp) .. controls (372bp,532bp) and (448bp,564bp)  .. (513bp,591bp);
  \draw [->] (418bp,593bp) .. controls (409bp,563bp) and (392bp,508bp)  .. (376bp,454bp);
  \draw [->] (295bp,474bp) .. controls (279bp,413bp) and (241bp,275bp)  .. (215bp,176bp);
  \pgfsetcolor{blue}
  \draw [->,very thick] (571bp,450bp) .. controls (555bp,429bp) and (527bp,394bp)  .. (500bp,359bp);
  \draw [->,very thick] (476bp,504bp) .. controls (477bp,478bp) and (479bp,417bp)  .. (480bp,366bp);
  \pgfsetcolor{black}
  \draw [->] (325bp,523bp) .. controls (346bp,543bp) and (379bp,572bp)  .. (408bp,599bp);
  \draw [->] (447bp,614bp) .. controls (461bp,614bp) and (479bp,615bp)  .. (508bp,615bp);
  \draw [->] (230bp,675bp) .. controls (248bp,633bp) and (272bp,577bp)  .. (291bp,531bp);
  \draw [->] (436bp,594bp) .. controls (444bp,577bp) and (457bp,554bp)  .. (470bp,528bp);
  \draw [->] (454bp,348bp) .. controls (445bp,352bp) and (434bp,357bp)  .. (414bp,367bp);
  \draw [->] (115bp,532bp) .. controls (160bp,525bp) and (223bp,516bp)  .. (273bp,508bp);
  \pgfsetcolor{blue}
  \draw [->,very thick] (465bp,361bp) .. controls (451bp,384bp) and (428bp,416bp)  .. (402bp,440bp) .. controls (383bp,458bp) and (359bp,473bp)  .. (330bp,489bp);
  \pgfsetcolor{black}
  \draw [->] (490bp,364bp) .. controls (503bp,407bp) and (530bp,488bp)  .. (552bp,556bp);
  \draw [->] (470bp,528bp) .. controls (463bp,542bp) and (450bp,566bp)  .. (436bp,594bp);
  \draw [->] (581bp,485bp) .. controls (580bp,500bp) and (579bp,522bp)  .. (576bp,553bp);
  \draw [->] (565bp,459bp) .. controls (535bp,450bp) and (475bp,431bp)  .. (417bp,413bp);
  \draw [->] (273bp,508bp) .. controls (237bp,514bp) and (173bp,524bp)  .. (115bp,532bp);
  \draw [->] (457bp,316bp) .. controls (414bp,282bp) and (322bp,208bp)  .. (252bp,152bp);
  \draw [->] (446bp,621bp) .. controls (456bp,621bp) and (465bp,619bp)  .. (465bp,614bp) .. controls (465bp,611bp) and (461bp,609bp)  .. (446bp,607bp);
  \draw [->] (329bp,451bp) .. controls (327bp,456bp) and (324bp,462bp)  .. (317bp,476bp);
  \draw [->] (564bp,468bp) .. controls (519bp,474bp) and (404bp,489bp)  .. (333bp,499bp);
  \draw [->] (451bp,330bp) .. controls (387bp,319bp) and (235bp,294bp)  .. (144bp,279bp);
  \draw [->] (479bp,304bp) .. controls (477bp,271bp) and (474bp,216bp)  .. (471bp,159bp);
  \draw [->] (566bp,473bp) .. controls (547bp,482bp) and (516bp,498bp)  .. (487bp,512bp);
  \pgfsetcolor{blue}
  \draw [->,very thick] (511bp,342bp) .. controls (521bp,342bp) and (529bp,340bp)  .. (529bp,335bp) .. controls (529bp,332bp) and (526bp,330bp)  .. (511bp,328bp);
  \pgfsetcolor{black}
  \draw [->] (283bp,480bp) .. controls (248bp,441bp) and (179bp,362bp)  .. (129bp,305bp);
  \draw [->] (485bp,526bp) .. controls (494bp,535bp) and (506bp,549bp)  .. (527bp,570bp);
  \draw [->] (317bp,475bp) .. controls (320bp,471bp) and (322bp,465bp)  .. (329bp,451bp);
\begin{scope}
  \pgfsetstrokecolor{black}
  \pgfsetfillcolor{lightgray}
  \filldraw (476bp,517bp) ellipse (12bp and 13bp);
  \draw (476bp,517bp) node {\href{http://amadeus.inria.fr/gaubert/PAPERS/TU-0190}{a}};
\end{scope}
\begin{scope}
  \pgfsetstrokecolor{black}
  \draw (466bp,80bp) ellipse (79bp and 79bp);
  \draw (466bp,80bp) node {\href{http://www.siam.org/meetings/ct09}{conference}};
\end{scope}
\begin{scope}
  \pgfsetstrokecolor{black}
  \pgfsetfillcolor{lightgray}
  \filldraw (481bp,335bp) ellipse (26bp and 27bp);
  \draw (481bp,335bp) ellipse (30bp and 31bp);
  \draw (481bp,335bp) node {\href{http://amadeus.inria.fr/gaubert/programme/index.html}{c}};
\end{scope}
\begin{scope}
  \pgfsetstrokecolor{black}
  \pgfsetfillcolor{lightgray}
  \filldraw (425bp,614bp) ellipse (22bp and 23bp);
  \draw (425bp,614bp) node {\href{http://amadeus.inria.fr/gaubert/HOWARD2.html}{b}};
\end{scope}
\begin{scope}
  \pgfsetstrokecolor{black}
  \pgfsetfillcolor{lightgray}
  \filldraw (583bp,465bp) ellipse (19bp and 20bp);
  \draw (583bp,465bp) node {\href{http://amadeus.inria.fr/gaubert/papers.html}{d}};
\end{scope}
\begin{scope}
  \pgfsetstrokecolor{black}
  \draw (357bp,394bp) ellipse (63bp and 63bp);
  \draw (357bp,394bp) node {\href{http://www.inria.fr}{institute}};
\end{scope}
\begin{scope}
  \pgfsetstrokecolor{black}
  \draw (650bp,156bp) ellipse (71bp and 71bp);
  \draw (650bp,156bp) node {\href{http://www.lsv.ens-cachan.fr/Seminaires/anciens?l=fr&sem=200811041100}{university}};
\end{scope}
\begin{scope}
  \pgfsetstrokecolor{black}
  \draw (742bp,349bp) ellipse (46bp and 46bp);
  \draw (742bp,349bp) node {\href{http://dx.doi.org/10.1145/1190095.1190110}{arxiv}};
\end{scope}
\begin{scope}
  \pgfsetstrokecolor{black}
  \pgfsetfillcolor{lightgray}
  \filldraw (303bp,503bp) ellipse (30bp and 31bp);
  \draw (303bp,503bp) node {\href{http://amadeus.inria.fr/gaubert}{main}};
\end{scope}
\begin{scope}
  \pgfsetstrokecolor{black}
  \draw (58bp,541bp) ellipse (57bp and 57bp);
  \draw (58bp,541bp) node {\href{http://www.cmap.polytechnique.fr}{lab}};
\end{scope}
\begin{scope}
  \pgfsetstrokecolor{black}
  \draw (571bp,616bp) ellipse (63bp and 63bp);
  \draw (571bp,616bp) node {\href{http://users.dsic.upv.es/~sas2008/accepted_papers.html}{other}};
\end{scope}
\begin{scope}
  \pgfsetstrokecolor{black}
  \draw (100bp,272bp) ellipse (44bp and 44bp);
  \draw (100bp,272bp) node {\href{http://www.cmap.polytechnique.fr/spip.php?rubrique103}{team}};
\end{scope}
\begin{scope}
  \pgfsetstrokecolor{black}
  \draw (196bp,107bp) ellipse (71bp and 71bp);
  \draw (196bp,107bp) node {\href{http://www-rocq.inria.fr/metalau/cohen}{friend2}};
\end{scope}
\begin{scope}
  \pgfsetstrokecolor{black}
  \draw (203bp,737bp) ellipse (67bp and 67bp);
  \draw (203bp,737bp) node {\href{http://www-rocq.inria.fr/metalau/quadrat}{friend1}};
\end{scope}
\end{tikzpicture} 
}
}
\caption{Web graph of Figure~\ref{graph:base} optimized under local skeleton constraints. The optimal strategy consists in linking as much as possible to page ''c'' (actually, the page of a~lecture), up to saturating the
skeleton constraint. This page gains then a~PageRank comparable to the one of the main page.
The sum of the PageRank scores has been increased by 22.6\%.
}
\label{graph:local}
\end{figure}
\end{example}

\begin{example}
We now consider a discrete Pagerank optimization problem starting from the same graph.
We set obligatory links to be the initial links and we represent them on the adjacency matrix in Figure~\ref{graph:discrete}
by squares. Facultative links are all other possible links from controlled pages.

\begin{figure}[thbp]
\centering
\includegraphics[width=17em]{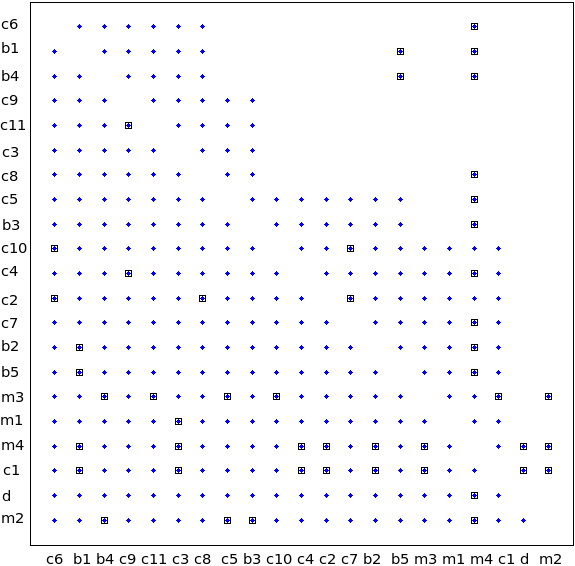}
\caption{The web graph optimized under discrete uniform transitions constraints. In this case, the optimized graph has almost all internal links (links from a controlled page to another controlled page), so, for more
readability, 
we display its adjacency matrix. 
The hyperlinks correspond to blue dots, obligatory links correspond to squares. The pages are ordered by decreasing average reward before teleportation (Section~\ref{sec:shape}). 
The optimal strategy consists in adding a~lot of internal links excluding
certain pages, as will be explained by the master Page theorem below (Theorem~\ref{thm:masterPageDisc}).}
\label{graph:discrete}
\end{figure}
\end{example}

\subsection{Optimizing the PageRank via Value iteration} \label{subsec-value} 

The PageRank optimization is likely not to be applied to the world
wide web, but rather to a~fragment of it, consisting of a~web site
(or of a~collection of web sites of a~community) and of related
sites (see Remark~\ref{rem:incompCrawling} in Section~\ref{sec:shape}) 
However, even in such simplified instances, the number of 
design variables may be large, typically between thousands and millions.
Hence, it is desirable to have scalable algorithms. 
We next describe two methods, showing that the optimization problem is computationally 
easy when there are no coupling constraints: then, optimizing
the PageRank is essentially not more expensive than computing the PageRank.

\begin{prop}  \label{prop:contrrate}
 Let $T$ be the dynamic programming operator $\R^n\to \R^n$ defined by
\begin{equation*}
T_i(w) = \max_{\nu \;\text{st} \; \alpha \nu + (1-\alpha) z \in \mathcal{P}_i} 
\alpha \nu (r_{i,\cdot} + w) + (1-\alpha) z \cdot r_{i,\cdot} \;, \quad \forall i \in [n] \enspace.
\end{equation*}
\ifthenelse{\boolean{arxiv}}{
}
{}
The map $T$ is $\alpha$-contracting and its unique fixed point $w$ is such that
$(w, (1-\alpha) z w)$ is solution of the ergodic dynamic programming equation~\eqref{eqn:progdyn}.
\end{prop}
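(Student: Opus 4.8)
The plan is to prove the two assertions separately, connecting them through a single algebraic identity relating $T$ to the right-hand side of~\eqref{eqn:progdyn}. First I would write every admissible transition law as $P_{i,\cdot}=\alpha\nu+(1-\alpha)z$, where $\nu$ ranges over the admissible \emph{pre-teleportation} measures, i.e. those $\nu\in\Sigma_n$ with $\alpha\nu+(1-\alpha)z\in\mathcal{P}_i$; this is exactly the change of variable behind Remark~\ref{rem:polytopeWithDamping}. Substituting into the bracket of~\eqref{eqn:progdyn} and expanding,
\begin{equation*}
(\alpha\nu+(1-\alpha)z)(r_{i,\cdot}+w)=\alpha\nu(r_{i,\cdot}+w)+(1-\alpha)z\cdot r_{i,\cdot}+(1-\alpha)zw,
\end{equation*}
the last term being independent of $\nu$. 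Taking the maximum over $\nu$ therefore yields
\begin{equation*}
\max_{P\in\mathcal{P}_i}P(r_{i,\cdot}+w)=T_i(w)+(1-\alpha)zw\enspace.
\end{equation*}

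From this identity the second assertion is immediate. If $w$ is a fixed point of $T$, substituting $T_i(w)=w_i$ gives $w_i+(1-\alpha)zw=\max_{P\in\mathcal{P}_i}P(r_{i,\cdot}+w)$, which is precisely~\eqref{eqn:progdyn} with the scalar $\psi=(1-\alpha)zw$. Hence $(w,(1-\alpha)zw)$ solves the ergodic dynamic programming equation.

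For the contraction I would use that $T$ is a coordinatewise maximum of maps affine in $w$ with gradient $\alpha\nu$. The key structural point is that each admissible $\nu$ is a genuine probability vector ($\nu\geq 0$, $\sum_j\nu_j=1$), the teleportation mass $(1-\alpha)z$ having been split off; this is what produces the rate $\alpha$. Fixing $i$ and $w,w'$, and letting $\nu^\ast$ attain the maximum defining $T_i(w)$, the reward and teleportation terms cancel (they do not depend on $w$, and $\nu^\ast$ stays feasible for $w'$), so that
\begin{equation*}
T_i(w)-T_i(w')\leq \alpha\nu^\ast(w-w')\leq \alpha\Big(\sum_j\nu^\ast_j\Big)\norm{w-w'}_\infty=\alpha\norm{w-w'}_\infty,
\end{equation*}
the last inequality using $\nu^\ast\geq 0$ with unit mass. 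By symmetry and maximizing over $i$, $\norm{T(w)-T(w')}_\infty\leq\alpha\norm{w-w'}_\infty$, so $T$ is $\alpha$-contracting. Since $\alpha<1$ and $(\R^n,\norm{\cdot}_\infty)$ is complete, Banach's fixed point theorem gives a unique fixed point $w$, which by the previous paragraph furnishes the announced solution.

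The main obstacle is not the estimate itself but justifying its two hypotheses: that the maximum defining $T_i$ is attained (compactness of the admissible set of $\nu$, inherited from $\mathcal{P}_i$ being a nonempty polytope), and above all that the optimizing $\nu^\ast$ is nonnegative with unit total mass. Were $\nu^\ast$ allowed negative entries---which a careless reading of the constraint $\alpha\nu+(1-\alpha)z\in\mathcal{P}_i\subseteq\Sigma_n$ might suggest---the bound $\nu^\ast(w-w')\leq\norm{w-w'}_\infty$ would fail and the rate $\alpha$ would be lost. The argument therefore hinges on the modeling fact that $\mathcal{P}_i=\alpha\tilde{\mathcal{P}}_i+(1-\alpha)z$ with $\tilde{\mathcal{P}}_i\subseteq\Sigma_n$, so that the admissible $\nu$ are exactly the elements of $\tilde{\mathcal{P}}_i$.
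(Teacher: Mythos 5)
Your proof is correct and follows essentially the same route as the paper's: the same change of variables $P_{i,\cdot}=\alpha\nu+(1-\alpha)z$ and expansion identity showing a fixed point of $T$ solves~\eqref{eqn:progdyn} with $\psi=(1-\alpha)zw$, and the same contraction argument resting on the admissible $\nu$ forming a set of probability measures (which the paper asserts in one line and you spell out, rightly noting via Remark~\ref{rem:polytopeWithDamping} that $\mathcal{P}_i=\alpha\mathcal{S}_i+(1-\alpha)z$ with $\mathcal{S}_i\subseteq\Sigma_n$ is what guarantees $\nu\geq 0$). Your explicit $\varepsilon$-free estimate and the Banach fixed point step are just a more detailed write-up of what the paper leaves implicit.
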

\begin{proof}
 The set $\{\nu \;\text{st} \; \alpha \nu + (1-\alpha) z \in \mathcal{P}_i\}$ is a~set of probability measures so 
it is clear that $T$ is $\alpha$-contracting. Let $w$ be its fixed point. For all $i \in [n]$, 
\begin{equation*}
w_i =  \max_{\nu \;\text{st} \; \alpha \nu + (1-\alpha) z \in \mathcal{P}_i} 
\alpha \nu (r_{i,\cdot} + w) + (1-\alpha) z \cdot r_{i,\cdot}
= \max_{\nu \in \mathcal{P}_i} 
 \nu (r_{i,\cdot} + w) - (1-\alpha) z w
\end{equation*}
We get equation \eqref{eqn:progdyn} with constant $(1-\alpha) z w$.
\end{proof}
\begin{remark} \label{rem:discount}
 $T$ is the dynamic programming operator of a~total reward discounted problem with discount rate
 $\alpha$ and rewards $r'_{i,j} = r_{i,j} + \frac{1-\alpha}{\alpha} \sum_{l\in [n]} z_{l} r_{i,l}$ 
 for transition from $i$ to~$j$ (cf. Remark~\ref{rem:teleportPrice}).
\end{remark}

\begin{remark} \label{rem:fixPointBias}
 The fixed point found is just the mean reward before teleportation at the optimum (see Definition~\ref{defn:v}, Section~\ref{sec:shape}) .
\end{remark}

We can then solve the dynamic programming equation~(\ref{eqn:progdyn}) and so the
PageRank Optimization Problem~\eqref{eqn:ideal} or~\eqref{eqn:idealDisc} 
 with local constraints by value iteration.

The algorithm starts with an~initial potential function $w$, 
scans repeatedly the pages and updates $w_i$ when $i$ is the current
page according to $w_i \leftarrow T_i(w)$
until convergence is reached.
Then $(w , (1-\alpha) z w)$ is solution of the ergodic dynamic programming equation~(\ref{eqn:progdyn}) and the optimal linkage strategy is recovered by selecting the
maximizing $\nu$ at each page. 

Thanks to the damping factor $\alpha$, the iteration can be seen to be $\alpha$-contracting. 
Thus the algorithm converges in a~number of steps independent of the
dimension of the web graph.

For the evaluation of the dynamic programming operator, one can use a~linear program
using to the description of the actions by facets. It is however usually possible to develop algorithms
much faster 
than linear programming. We describe here a~greedy algorithm for the discrete PageRank Optimization problem.
The algorithm is straightforward
if the set of obligatory links $\mathcal{O}_i$ is empty (Propositions~\ref{prop:emptySet} and~\ref{prop:emptySet2}), 
so we only describe it in the other case. In 
Algorithm~\ref{alg:greedy}, $J$ represents the set of facultative hyperlinks activated. We
initialize it with the empty set and we augment it
with the best hyperlink until 
it is not valuable any more to add a hyperlink.

 \begin{algorithm}[htpb] 
\caption{Evaluation of the dynamic programming operator in the discrete problem} 
\begin{algorithmic}[1]  \label{alg:greedy} 
\STATE Initialization: $J \leftarrow \emptyset$ and $k \leftarrow 1$ 
\STATE Sort $(w_l + r_{i,l})_{l \in \mathcal{F}_i}$ in decreasing order and let $\psi : \{1,\ldots, \abs{\mathcal{F}_i} \} \rightarrow \mathcal{F}_i$ be the sort  function so that 
$w_{\psi(1)} + r_{i,\psi(1)} \geq \dots\geq
w_{\psi(|\mathcal{F}_i|)} + r_{i,\psi(|\mathcal{F}_i|)}$.
\WHILE{$\frac{1}{\abs{J}+\abs{\mathcal{O}_i}} \sum_{l \in J \cup \mathcal{O}_i} (w_l + r_{i,l}) < w_{\psi(k)} + r_{i,\psi(k)}$ 
and $k \leq \mathcal{F}_i$} 
\STATE $J \leftarrow J \cup \{ \psi(k) \}$ and $k \leftarrow k+1$ 
\ENDWHILE 
\STATE $T_i(w)= \alpha \frac{1}{\abs{J}+\abs{\mathcal{O}_i}} \sum_{l \in J \cup \mathcal{O}_i}(w_l + r_{i,l})+ (1-\alpha) \sum_{l \in [n]} z_l r_{i,l}$ 
 \end{algorithmic}  
\end{algorithm} 

\begin{prop} 
When the constraints of the Discrete PageRank Optimization problem~\eqref{eqn:idealDisc} are defined by obligatory, facultative and
forbidden links, the greedy algorithm (Algorithm~\ref{alg:greedy}) started at page $i$ returns $T_i(w)$ as defined in Proposition~\ref{prop:contrrate}. 
\end{prop}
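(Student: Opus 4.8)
The plan is to unwind the definition of $T_i(w)$ in Proposition~\ref{prop:contrrate} into a purely combinatorial optimization over subsets of facultative links, and then to show that the greedy procedure of Algorithm~\ref{alg:greedy} solves that combinatorial problem exactly. First I would use Remark~\ref{rem:polytopeWithDamping}, which gives $\mathcal{P}_i=\alpha\mathcal{S}_i+(1-\alpha)z$ with $\mathcal{S}_i=\co(\mathcal{D}_i)$, so that the constraint $\alpha\nu+(1-\alpha)z\in\mathcal{P}_i$ is equivalent to $\nu\in\co(\mathcal{D}_i)$. Since the objective $\nu\mapsto\nu(r_{i,\cdot}+w)$ is linear and $\alpha>0$, its maximum over $\co(\mathcal{D}_i)$ is attained at an extreme point, and by Theorem~\ref{thm:nonEmptySet} every extreme point lies in $\mathcal{D}_i$. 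Hence, in the case $\mathcal{O}_i\neq\emptyset$ treated by the algorithm,
\[
T_i(w)=\alpha\max_{J\subseteq\mathcal{F}_i}\frac{1}{\abs{\mathcal{O}_i}+\abs{J}}\sum_{l\in\mathcal{O}_i\cup J}(w_l+r_{i,l})+(1-\alpha)\sum_{l\in[n]}z_l r_{i,l}\enspace,
\]
because an element of $\mathcal{D}_i$ is exactly a uniform measure on $\mathcal{O}_i\cup J$ for some $J\subseteq\mathcal{F}_i$. Writing $c_l:=w_l+r_{i,l}$, it remains to prove that the greedy loop computes $\max_{J}\frac{1}{\abs{\mathcal{O}_i}+\abs{J}}\sum_{l\in\mathcal{O}_i\cup J}c_l$, since the returned quantity is precisely $\alpha$ times this maximum plus the teleportation term.

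Next I would reduce the optimization to \emph{prefixes} of the sorted order. For a fixed cardinality $\abs{J}=m$, the denominator $\abs{\mathcal{O}_i}+m$ is fixed, so the average is maximized by maximizing the numerator, i.e.\ by choosing $J$ to consist of the $m$ largest values $c_l$, $l\in\mathcal{F}_i$; this is exactly the prefix $\{\psi(1),\ldots,\psi(m)\}$ produced by the sort function $\psi$. Consequently the global optimum over all $J$ is attained by some prefix, and it suffices to maximize, over $m\in\{0,\ldots,\abs{\mathcal{F}_i}\}$, the quantity $a_m:=\frac{1}{\abs{\mathcal{O}_i}+m}\sum_{l\in\mathcal{O}_i\cup\{\psi(1),\ldots,\psi(m)\}}c_l$.

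The decisive step is the unimodality of the sequence $(a_m)_m$, which is what makes the local stopping rule of the \textbf{while} loop globally correct. The identity
\[
a_m=\frac{(\abs{\mathcal{O}_i}+m-1)\,a_{m-1}+c_{\psi(m)}}{\abs{\mathcal{O}_i}+m}
\]
exhibits $a_m$ as a convex combination of $a_{m-1}$ and $c_{\psi(m)}$, so $a_m-a_{m-1}$ has the sign of $c_{\psi(m)}-a_{m-1}$ and $a_m$ lies between $a_{m-1}$ and $c_{\psi(m)}$. Now suppose the loop stops at step $m$, i.e.\ $c_{\psi(m)}\le a_{m-1}$ for the first time; then $c_{\psi(m)}\le a_m\le a_{m-1}$, and since the sort is decreasing, $c_{\psi(m+1)}\le c_{\psi(m)}\le a_m$, whence $a_{m+1}\le a_m$, and by induction $a_{k}\le a_{m-1}$ for all $k\ge m-1$. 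Before stopping, each executed iteration had $c_{\psi(k)}>a_{k-1}$, hence $a_k>a_{k-1}$, so $(a_m)$ increases strictly up to the stopping index and then never exceeds its value there. Thus the greedy stopping point is a global maximizer, $J\cup\mathcal{O}_i$ realizes $\max_J a_{\abs J}$, and the returned value equals $T_i(w)$. I expect this unimodality argument — specifically ruling out that a later, worse element could ever push the average back up — to be the only genuinely nontrivial point; the reductions in the first two paragraphs are routine once Theorem~\ref{thm:nonEmptySet} and Remark~\ref{rem:polytopeWithDamping} are invoked.
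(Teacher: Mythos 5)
Your proof is correct and takes essentially the same route as the paper's: reduce $T_i(w)$ to maximizing the average $\frac{1}{\abs{\mathcal{O}_i}+\abs{J}}\sum_{l\in\mathcal{O}_i\cup J}(w_l+r_{i,l})$ over $J\subseteq\mathcal{F}_i$, note that sorted prefixes are optimal at each fixed cardinality, and justify the stopping rule by the equivalence between ``current average $\geq$ next element'' and ``current average $\geq$ augmented average'', which is precisely the pivot of the paper's argument. Your explicit unimodality lemma for the sequence $(a_m)$ is exactly what the paper compresses into the phrase ``the sorting implies that we have the best choice of outlinks'', so you have simply (and usefully) filled in that step, together with the extreme-point reduction via Theorem~\ref{thm:nonEmptySet} and Remark~\ref{rem:polytopeWithDamping} that the paper leaves implicit.
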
 
\begin{proof} 
 The local constraints are obviously respected by construction.  
At the end of the loop, we have the best choice of facultative outlinks from page~$i$ with exactly $\abs{J}$ outlinks.  
But as $\frac{1}{\abs{J}+ \abs{\mathcal{O}_i}} \sum_{l \in J \cup \mathcal{O}_i} (w_l + r_{i,l}) \geq w_j + r_{i,j} \Leftrightarrow  
\frac{1}{\abs{J}+\abs{\mathcal{O}_i}} \sum_{l \in J \cup \mathcal{O}_i} (w_l + r_{i,l}) \geq \frac{1}{\abs{J}+\abs{\mathcal{O}_i}+1} \sum_{l \in J \cup \mathcal{O}_i \cup \{j\}} (w_l + r_{i,l})$, 
the~sorting implies that we have the best choice of outlinks. 
\end{proof} 


\begin{remark}
 A straightforward modification of the greedy algorithm can handle 
a upper or a~lower limit on the number of links on a~given page. 
\end{remark} 

\begin{prop} \label{prop:greedy}
An $\epsilon$ approximation of the Discrete PageRank Optimization Problem~\eqref{eqn:idealDisc} 
with only local constraints can be done in time
\[
\text{\Large{O}} \Big ( \frac{\log(\epsilon)}{\log(\alpha)} \sum_{i \in [n]} \abs{\mathcal{O}_i} + \abs{\mathcal{F}_i} \log(\abs{\mathcal{F}_i}) \Big )
\] 
\end{prop}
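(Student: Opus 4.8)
The plan is to combine the geometric convergence of the value iteration scheme of Section~\ref{subsec-value} with the per-page cost of the greedy evaluation in Algorithm~\ref{alg:greedy}. First I would invoke Proposition~\ref{prop:contrrate}, by which the operator $T$ is $\alpha$-contracting for the sup-norm; hence the iterates $w^{(k)}=T(w^{(k-1)})$ obey $\norm{w^{(k)}-w^*}_\infty\le \alpha^k\norm{w^{(0)}-w^*}_\infty$, where $w^*$ is the unique fixed point. Starting from $w^{(0)}=0$, the discounted interpretation of $T$ (Remark~\ref{rem:discount}) bounds $\norm{w^*}_\infty$ by $\frac{1}{1-\alpha}$ times the largest reward magnitude, which is independent of $n$; thus the initial gap enters only through a constant factor. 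Consequently it suffices to run $k$ sweeps with $\alpha^k\le\epsilon$, that is $k=\text{\Large O}\big(\log(\epsilon)/\log(\alpha)\big)$ sweeps, both logarithms being negative so that the ratio is positive. Since the optimal value $\psi=(1-\alpha)zw^*$ and the maximizing strategy depend on $w$ in a Lipschitz way, an $\epsilon$-accurate bias yields an $\epsilon$-approximation of the problem, up to a constant absorbed in the $\text{\Large O}$.

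Second, I would bound the cost of a single sweep, which amounts to evaluating $T_i(w)$ for every page $i$. By the preceding proposition, Algorithm~\ref{alg:greedy} returns $T_i(w)$ exactly, and its cost is dominated by the sort of the quantities $(w_l+r_{i,l})_{l\in\mathcal{F}_i}$, costing $\text{\Large O}(\abs{\mathcal{F}_i}\log\abs{\mathcal{F}_i})$. The obligatory contribution $\sum_{l\in\mathcal{O}_i}(w_l+r_{i,l})$ costs $\text{\Large O}(\abs{\mathcal{O}_i})$, while the teleportation term $(1-\alpha)\sum_{l\in[n]}z_l r_{i,l}$ does not depend on $w$ and is precomputed once. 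Keeping a running sum while scanning the sorted list makes the \textbf{while} loop cost $\text{\Large O}(\abs{\mathcal{F}_i})$, which is dominated by the sort. Hence evaluating $T_i(w)$ costs $\text{\Large O}(\abs{\mathcal{O}_i}+\abs{\mathcal{F}_i}\log\abs{\mathcal{F}_i})$, and one full sweep costs $\sum_{i\in[n]}\text{\Large O}(\abs{\mathcal{O}_i}+\abs{\mathcal{F}_i}\log\abs{\mathcal{F}_i})$.

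Multiplying the number of sweeps by the per-sweep cost then gives the announced bound, the factor $\log(\epsilon)/\log(\alpha)$ multiplying the whole sum because $w$ changes between sweeps and the sort must therefore be redone at each one. The only genuinely delicate step is the first: one must argue that the notion of ``$\epsilon$-approximation of the problem'' is controlled by the sup-norm error on the bias, and that the initial error together with the reward magnitudes enter solely through the $\text{\Large O}(1)$ constant, so that the iteration count is $\text{\Large O}(\log(\epsilon)/\log(\alpha))$ \emph{independently of $n$}; this is exactly where the $n$-independent contraction rate of Proposition~\ref{prop:contrrate} is essential. The remaining accounting is routine, and it makes transparent the claim of the introduction that the $\log n$ overhead relative to plain PageRank computation is precisely the sorting cost $\log\abs{\mathcal{F}_i}\le\log n$.
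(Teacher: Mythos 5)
Your proposal is correct and takes essentially the same route as the paper's own proof: value iteration with the $\alpha$-contracting operator $T$ of Proposition~\ref{prop:contrrate}, run for $O\big(\log(\epsilon)/\log(\alpha)\big)$ sweeps after normalizing the rewards so that the initial sup-norm error is $O(1)$, with each sweep evaluated by the greedy Algorithm~\ref{alg:greedy} at cost $O\big(\sum_{i \in [n]} \abs{\mathcal{O}_i} + \abs{\mathcal{F}_i}\log(\abs{\mathcal{F}_i})\big)$. Your additional remarks (precomputing the teleportation term, the running sum in the while loop, and controlling the value error through $\psi=(1-\alpha)zw$) merely make explicit steps the paper leaves implicit.
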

\begin{proof}
The value of the PageRank optimization problem
is $(1-\alpha)z w$ where $w=T(w)$.
Thus it is bounded by $(1-\alpha) \norm{z}_1 \norm{w}_{\infty}=(1-\alpha) \norm{w}_{\infty}$.
The greedy algorithm described in the preceding paragraph evaluates  
the $i$th coordinate of the dynamic programming operator $T$  
\ifthenelse{\boolean{arxiv}}{in a time bounded by}{in time}
$O(\abs{\mathcal{O}_i} + \abs{\mathcal{F}_i} \log(\abs{\mathcal{F}_i}))$  
(by performing a~matrix-vector product and a~sort).  
Thus it evaluates the dynamic programming operator \ifthenelse{\boolean{arxiv}}{in a time bounded by}{in time}
 $O \left (\sum_{i \in [n]} \abs{\mathcal{O}_i} + \abs{\mathcal{F}_i} \log(\abs{\mathcal{F}_i}) \right )$. 

Now, if we normalize the rewards and if we begin the value iteration with
$w^0 = 0$, the initial error is less than $1$ in sup-norm.
The fixed point iteration reduces this error by at least $\alpha$, so we have to find $k \in \mathbb{N}$ such that $\alpha^k \leq \epsilon$.
With $k \geq \frac{\log(\epsilon)}{\log(\alpha)}$, the result holds.
\end{proof}

This result should be compared to PageRank computation's complexity by the power method~\cite{PRcomputing}, which is 
$O \left ( \frac{\log(\epsilon)}{\log(\alpha)} \sum_{i \in [n]} \abs{\mathcal{O}_i} + \abs{\mathcal{F}_i} \right )$. 


\section{General shape of an~optimized web site} \label{sec:shape}


We now use the previous model to identify the features of
optimal link strategies. In~particular, we shall identify 
circumstances under which there is always one ``master'' page,
to which all other pages should link.

As in the work of De Kerchove, Ninove and Van Dooren~\cite{NinKer-PRopt}, we 
shall use the mean reward before teleportation to study the optimal outlink strategies.
\begin{defn} \label{defn:v}
Given a~stochastic matrix $P$, the \NEW{mean reward before teleportation}
is given by
$ 
v(P) := (I_n - \alpha S)^{-1} \bar{r}
$, 
where $\bar{r}_i = \sum_j P_{i,j} r_{i,j}$.
\end{defn}
 Recall that $S$ is the original matrix (without damping factor),

\begin{prop} \label{prop:masterPage}
 Suppose the instantaneous reward $r_{i,j}$ only depends
 on the current page $i$ ($r_{i,j}=r'_i$). Denote $v(P)$ be the mean reward before teleportation 
(Definition \ref{defn:v}). 
Then $P$ is an optimal link strategy of the continuous PageRank Optimization problem~\eqref{eqn:ideal} if and only if
\[
 \forall i \in [n], \quad P_{i,\cdot} \in \arg\max_{\nu \in \mathcal{P}_i} \nu v(P)
\]
\end{prop}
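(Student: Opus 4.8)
The plan is to reduce the statement to the ergodic dynamic programming equation~\eqref{eqn:progdyn} via a Poisson (evaluation) identity for the mean reward before teleportation. First I would prove that, for \emph{any} admissible $P=\alpha S+(1-\alpha)ez$, the pair $w:=v(P)$ and $\psi:=(1-\alpha)z\,v(P)$ satisfies $w_i+\psi = P_{i,\cdot}(r_{i,\cdot}+w)$ for all $i$. This comes from rewriting $v(P)=(I_n-\alpha S)^{-1}\bar r$ as $v(P)_i=\bar r_i+\alpha(Sv(P))_i$ and substituting $\alpha S_{i,j}=P_{i,j}-(1-\alpha)z_j$, which replaces $\alpha(Sv(P))_i$ by $P_{i,\cdot}v(P)-(1-\alpha)z\,v(P)$; with $\bar r_i=P_{i,\cdot}r_{i,\cdot}$ the identity follows. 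This step needs nothing about the special form of the reward.

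Second, I would record the simplification due to $r_{i,j}=r'_i$: for every probability vector $\nu$ one has $\nu\,r_{i,\cdot}=r'_i$, a constant independent of $\nu$, so $\arg\max_{\nu\in\mathcal{P}_i}\nu(r_{i,\cdot}+w)=\arg\max_{\nu\in\mathcal{P}_i}\nu w$, and this set is unchanged when $w$ is shifted by a multiple of the all-ones vector $e$ (since $\nu e=1$). Hence the condition $P_{i,\cdot}\in\arg\max_{\nu\in\mathcal{P}_i}\nu\,v(P)$ in the statement is equivalent to $P_{i,\cdot}\in\arg\max_{\nu\in\mathcal{P}_i}\nu(r_{i,\cdot}+v(P))$.

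For the ``if'' direction I would set $w=v(P)$ and $\psi=(1-\alpha)z\,v(P)$; the Poisson identity gives $w_i+\psi=P_{i,\cdot}(r_{i,\cdot}+w)$, and the hypothesis (via the previous paragraph) upgrades the right-hand side to $\max_{\nu\in\mathcal{P}_i}\nu(r_{i,\cdot}+w)$. Thus $(w,\psi)$ solves~\eqref{eqn:progdyn}, and Proposition~\ref{prop:progdynprinc} yields that $\psi$ is the value and $P$, as a maximizer, is optimal.

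The ``only if'' direction is where I expect the real obstacle. Let $(w^*,\psi)$ solve~\eqref{eqn:progdyn}. I must show an optimal $P$ attains the maximum there at $w^*$. Since $\alpha<1$ makes every admissible $P$ irreducible with a strictly positive stationary distribution $\pi$, I would start from the always-valid inequality $P_{i,\cdot}(r_{i,\cdot}+w^*)\le\psi+w^*_i$ (because $P_{i,\cdot}\in\mathcal{P}_i$), multiply by $\pi$ and use $\pi(I_n-P)=0$ to get that the gain $\pi\bar r$ of $P$ is at most $\psi$, with equality forcing equality at every $i$ (as $\pi_i>0$); since $P$ is optimal its gain equals $\psi$, so $w^*_i+\psi=P_{i,\cdot}(r_{i,\cdot}+w^*)$ for all $i$. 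Then $w^*$ and $v(P)$ both solve the Poisson equation for $P$ with the same constant $\psi$, so by $\ker(I_n-P)=\mathbb{R}e$ they differ by a multiple of $e$; the argmax invariances of the second paragraph then turn $P_{i,\cdot}\in\arg\max_{\nu\in\mathcal{P}_i}\nu(r_{i,\cdot}+w^*)$ into the desired $P_{i,\cdot}\in\arg\max_{\nu\in\mathcal{P}_i}\nu\,v(P)$.
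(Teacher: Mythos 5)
Your proposal is correct and follows essentially the same route as the paper: both hinge on the Poisson identity $v(P)+(1-\alpha)\bigl(zv(P)\bigr)e=\bar r+Pv(P)$, which (using $\nu e=1$ and $r_{i,j}=r'_i$) recasts the argmax condition as the dynamic programming equation~\eqref{eqn:progdyn} and then appeals to Proposition~\ref{prop:progdynprinc}. The only difference is one of detail: your ``only if'' direction --- averaging the inequality against the everywhere-positive stationary distribution $\pi$ to force equality at every state, then using $\ker(I_n-P)=\mathbb{R}e$ to identify the bias $w^*$ with $v(P)$ up to a multiple of $e$ --- rigorously fills in what the paper's terse equivalence claim leaves implicit.
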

\begin{proof}
We have $Pv(P)=v(P)-r'+\pi(P) r'$. Thus, using $\nu e=1$, the condition of the proposition is
equivalent to $\forall i \in [n], v_i(P)+\pi(P) r'=\max_{\nu \in \mathcal{P}_i} \nu (v(P)+r_i' e)$. By Proposition~\ref{prop:progdynprinc}, this
means that $v(P)$ is the bias of Equation~\eqref{eqn:progdyn} and that $P$ is an optimal outlink strategy.
\end{proof}

\begin{remark}
Proposition~\ref{prop:masterPage} shows that if $P$ is any optimal outlink strategy, 
at every page $i$, the transition probability $P_{i,\cdot}$ must maximize the same linear function.
\end{remark}
\begin{remark}
 If two pages have the same constraint sets, then they have the same optimal outlinks,
independently of their PageRank. This is no more the case with coupling constraints.
\end{remark}

For the discrete PageRank
Optimization problem, we have a~more precise result:
\begin{thm}[Master Page] \label{thm:masterPageDisc} 
Consider the Discrete PageRank Optimization problem~\eqref{eqn:idealDisc}
with constraints defined by given sets of obligatory, facultative and forbidden links. 
Suppose the instantaneous reward $r_{i,j}$ only depends
 on the current page $i$ ($r_{i,j}=r'_i$). Let $v$ be the mean reward before teleportation 
(Definition \ref{defn:v}) at the optimum. 
Then any optimal link strategy must choose 
for every controlled page $i$ all the facultative links $(i,j)$ 
such that $v_j > \frac{v_i - r_i}{\alpha}$ and any combination 
of facultative links such that $v_j = \frac{v_i - r'_i}{\alpha}$. 
Moreover, all optimal link strategies are obtained in this way. 

In~particular, every controlled page should point to the page with the highest 
mean reward before teleportation, as soon as it is allowed to. 
We call it the ``master page''. 
\end{thm}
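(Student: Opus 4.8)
The plan is to derive the Master Page theorem from Proposition~\ref{prop:masterPage} together with the explicit description of $\co(\mathcal{D}_i)$ given in Theorem~\ref{thm:nonEmptySet}. Since the reward depends only on the current page, $r_{i,j}=r'_i$, the optimality condition of Proposition~\ref{prop:masterPage} says that an optimal $P$ must satisfy $P_{i,\cdot}\in\arg\max_{\nu\in\mathcal{P}_i}\nu v$, where $v=v(P)$ is the mean reward before teleportation at the optimum. The first step is therefore to recognize that for the discrete problem the admissible transition measures from page $i$ form the discrete set $\mathcal{D}_i$, and that by Theorem~\ref{thm:reducDiscCont} maximizing $\nu v$ over $\mathcal{D}_i$ is the same as maximizing over the polytope $\co(\mathcal{D}_i)$, with optimal vertices lying in $\mathcal{D}_i$.

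Next I would pass from the damped transition probabilities $P_{i,\cdot}$ to the underlying uniform measure. Writing $P_{i,j}=\alpha S_{i,j}+(1-\alpha)z_j$, the teleportation part contributes a constant $(1-\alpha)zv$ to $\nu v$ independent of the strategy, so maximizing over the choice of outlinks reduces to maximizing $\alpha S_{i,\cdot}v$ over $S_{i,\cdot}\in\mathcal{D}_i$. Since every element of $\mathcal{D}_i$ is the uniform probability $\frac{1}{|J\cup\mathcal{O}_i|}\mathbf{1}_{J\cup\mathcal{O}_i}$ for some $J\subseteq\mathcal{F}_i$, the quantity to maximize is the average value $\frac{1}{|J\cup\mathcal{O}_i|}\sum_{l\in J\cup\mathcal{O}_i}v_l$. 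This is exactly the greedy-averaging structure already analysed in Algorithm~\ref{alg:greedy}: a facultative link $j$ strictly increases the average precisely when $v_j$ exceeds the current average, and it is neutral when $v_j$ equals it. The key algebraic step is the threshold identity: because $v$ satisfies the dynamic programming equation, the optimal average value over $J\cup\mathcal{O}_i$ equals the per-page quantity $\frac{v_i-r'_i}{\alpha}$, so the comparison ``$v_j$ versus the average'' becomes the stated comparison ``$v_j$ versus $\frac{v_i-r'_i}{\alpha}$''.

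To make this precise, I would invoke the dynamic programming equation~\eqref{eqn:progdyn} evaluated at an optimal strategy: $w_i+\psi=\max_{\nu\in\co(\mathcal{D}_i)}\nu(r'_i e+w)$, where $w=v$ is the bias (identified in Remark~\ref{rem:fixPointBias} with the mean reward before teleportation). Expanding with $\nu=\alpha S_{i,\cdot}+(1-\alpha)z$ and using $\nu e=1$ gives $v_i+\psi = r'_i + (1-\alpha)zv + \alpha\max_{S_{i,\cdot}\in\mathcal{D}_i}S_{i,\cdot}v$. Since $\psi=(1-\alpha)zv$ by Proposition~\ref{prop:contrrate}, the two constants cancel and one obtains $\max_{S_{i,\cdot}\in\mathcal{D}_i}S_{i,\cdot}v=\frac{v_i-r'_i}{\alpha}$, establishing that the optimal average equals the threshold. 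A facultative link $(i,j)$ may be included iff adding $j$ does not lower this optimal average, i.e.\ iff $v_j\ge\frac{v_i-r'_i}{\alpha}$; it must be included when the inequality is strict (otherwise the average, hence $v_i$, would not be maximal, contradicting optimality), and it is free to be included or not when equality holds. The ``moreover'' clause follows because every maximizing vertex of $\co(\mathcal{D}_i)$ arises this way, so conversely any selection satisfying the threshold rule attains the maximum and is optimal. The master-page statement is then immediate: the page $j^\star$ with the largest $v_{j^\star}$ clears the threshold at every page where it is a facultative (or obligatory) link, so every controlled page points to it whenever allowed.

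The main obstacle I anticipate is the treatment of the \emph{ties} ($v_j=\frac{v_i-r'_i}{\alpha}$) and the accompanying ``all optimal strategies are obtained in this way'' claim. The forward direction (the threshold rule is \emph{necessary}) requires care: one must argue that if some strict-threshold link were omitted, the average over $J\cup\mathcal{O}_i$ would strictly drop, contradicting that $v_i$ attains the DP maximum — this uses the elementary but essential lemma that inserting into an average a term above (resp.\ below, resp.\ equal to) the average raises (resp.\ lowers, resp.\ preserves) it, which is precisely the monotonicity underlying Algorithm~\ref{alg:greedy}. The converse direction, that every threshold-compatible selection is genuinely optimal, needs the fact that all such selections realize the same maximal average and hence the same value of the DP operator, so each yields a maximizing action in~\eqref{eqn:progdyn} and therefore an optimal strategy by Proposition~\ref{prop:progdynprinc}. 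One subtlety to handle cleanly is that $v$ appearing in the threshold is the bias \emph{at the optimum}, so the characterization is a fixed-point statement about optimal strategies rather than a constructive recipe from arbitrary data; I would state this dependence explicitly to avoid circularity.
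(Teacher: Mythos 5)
Your proof is correct and follows essentially the same route as the paper's: both identify $v$ with the unique fixed point of the contracting dynamic programming operator (Remark~\ref{rem:fixPointBias}, Proposition~\ref{prop:contrrate}) and exploit the averaging monotonicity of Algorithm~\ref{alg:greedy}, with the threshold identity $v_i=\alpha\max_{S_{i,\cdot}\in\mathcal{D}_i}S_{i,\cdot}v+r'_i$ doing the work in both arguments. The only point the paper treats that you gloss over is a page with $\mathcal{O}_i=\emptyset$, where $\mathcal{D}_i$ also contains the no-link action (teleportation by $Z$), so not every element has the form $\frac{1}{\abs{J\cup\mathcal{O}_i}}\mathbf{1}_{J\cup\mathcal{O}_i}$; a one-line case analysis as in the paper ($v_i=\sum_{k\in[n]}z_k v_k+r'_i>\alpha v_j+r'_i$ when no link is selected) closes this.
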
 

\begin{proof} 
By Remark~\ref{rem:fixPointBias}, we know that the mean reward before teleportation 
at the optimum is a~fixed point of the dynamic programming operator. 
In~particular, it is invariant by the application of the greedy algorithm (Algorithm~\ref{alg:greedy}). 
Moreover, by Proposition~\ref{prop:contrrate},  
the mean reward before teleportation at the optimum is unique. 

Thus, any optimal strategy must let the mean reward before teleportation invariant
by the greedy algorithm. 
When there is no obligatory link from page~$i$,
either a~link $(i,j)$ is selected and $v_i=\alpha v_j + r'_i$ 
or no link is selected and $v_i=\sum_{k \in [n]} z_k v_k +r'_i> \alpha v_j +r'_i$ for all facultative link~$(i,j)$.
When there is at least one obligatory link,
from Line~3 of the greedy algorithm, we know that, denoting $J$ the set of activated links, all the links $(i,j)$ 
verifying $\frac{1}{\abs{J}+\abs{\mathcal{O}_i}} \sum_{l \in J \cup \mathcal{O}_i} v_l + r'_{i} < v_j + r'_{i}$,
 must be activated.
This can be rewritten as $v_j > \frac{v_i - r'_i}{\alpha}$ because 
$v_i=\alpha \frac{1}{\abs{J}+\abs{\mathcal{O}_i}} \sum_{l \in J \cup \mathcal{O}_i} v_l + r'_i$.

Finally, activating any combination
of\ifthenelse{\boolean{arxiv}}{ the}{} facultative links such that $v_j = \frac{v_i - r'_i}{\alpha}$
gives the same mean reward before teleportation.
\end{proof}

The theorem is illustrated in~Example~2 (Section~\ref{sec:solvelocal}) and Figure~\ref{fig:smallExample}.

\begin{figure}
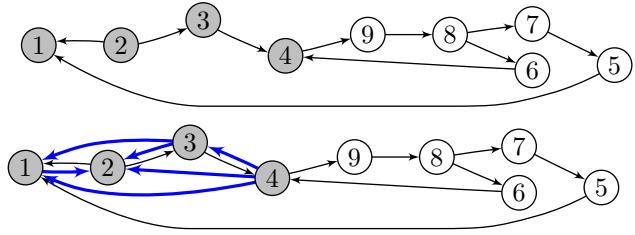
 
\centering
\ifthenelse{\boolean{notikz}}
{
\includegraphics[width=23em]{graphExampleDisc}
\includegraphics[width=23em]{graphExampleDiscOptimized}
}
{
\small{
\begin{tikzpicture}[>=latex', join=bevel, scale=0.25]
\pgfsetlinewidth{0.5bp}
\pgfsetcolor{black}
  \draw [->] (793bp,112bp) .. controls (812bp,102bp) and (840bp,89bp)  .. (870bp,74bp);
  \draw [->] (176bp,94bp) .. controls (195bp,98bp) and (221bp,106bp)  .. (251bp,118bp);
  \draw [->] (871bp,49bp) .. controls (852bp,39bp) and (823bp,26bp)  .. (796bp,19bp) .. controls (732bp,1bp) and (714bp,0bp)  .. (646bp,0bp) .. controls (274bp,0bp) and (274bp,0bp)  .. (274bp,0bp) .. controls (192bp,0bp) and (102bp,46bp)  .. (48bp,78bp);
  \draw [->] (124bp,97bp) .. controls (106bp,99bp) and (82bp,99bp)  .. (52bp,98bp);
  \draw [->] (672bp,111bp) .. controls (690bp,114bp) and (714bp,117bp)  .. (744bp,121bp);
  \draw [->] (548bp,108bp) .. controls (566bp,108bp) and (590bp,108bp)  .. (620bp,108bp);
  \draw [->] (744bp,56bp) .. controls (679bp,59bp) and (510bp,69bp)  .. (424bp,74bp);
  \draw [->] (670bp,98bp) .. controls (689bp,89bp) and (716bp,78bp)  .. (746bp,65bp);
  \draw [->] (296bp,115bp) .. controls (314bp,105bp) and (341bp,94bp)  .. (372bp,81bp);
  \draw [->] (423bp,83bp) .. controls (442bp,87bp) and (467bp,94bp)  .. (497bp,102bp);
\begin{scope}
  \pgfsetstrokecolor{black}
  \pgfsetfillcolor{lightgray}
  \filldraw (26bp,92bp) ellipse (26bp and 26bp);
  \draw (26bp,92bp) node {1};
\end{scope}
\begin{scope}
  \pgfsetstrokecolor{black}
  \pgfsetfillcolor{lightgray}
  \filldraw (274bp,130bp) ellipse (26bp and 26bp);
  \draw (274bp,130bp) node {3};
\end{scope}
\begin{scope}
  \pgfsetstrokecolor{black}
  \pgfsetfillcolor{lightgray}
  \filldraw (150bp,92bp) ellipse (26bp and 26bp);
  \draw (150bp,92bp) node {2};
\end{scope}
\begin{scope}
  \pgfsetstrokecolor{black}
  \draw (894bp,62bp) ellipse (26bp and 26bp);
  \draw (894bp,62bp) node {5};
\end{scope}
\begin{scope}
  \pgfsetstrokecolor{black}
  \pgfsetfillcolor{lightgray}
  \filldraw (398bp,76bp) ellipse (26bp and 26bp);
  \draw (398bp,76bp) node {4};
\end{scope}
\begin{scope}
  \pgfsetstrokecolor{black}
  \draw (770bp,124bp) ellipse (26bp and 26bp);
  \draw (770bp,124bp) node {7};
\end{scope}
\begin{scope}
  \pgfsetstrokecolor{black}
  \draw (770bp,54bp) ellipse (26bp and 26bp);
  \draw (770bp,54bp) node {6};
\end{scope}
\begin{scope}
  \pgfsetstrokecolor{black}
  \draw (522bp,108bp) ellipse (26bp and 26bp);
  \draw (522bp,108bp) node {9};
\end{scope}
\begin{scope}
  \pgfsetstrokecolor{black}
  \draw (646bp,108bp) ellipse (26bp and 26bp);
  \draw (646bp,108bp) node {8};
\end{scope} 
\end{tikzpicture}
\\[2mm]
\begin{tikzpicture}[>=latex', join=bevel, scale=0.25]
\pgfsetlinewidth{0.5bp}
\pgfsetcolor{black}
  \draw [->] (793bp,112bp) .. controls (812bp,102bp) and (840bp,89bp)  .. (870bp,74bp);
  \draw [->] (176bp,94bp) .. controls (195bp,98bp) and (221bp,106bp)  .. (251bp,118bp);
  \draw [->] (871bp,49bp) .. controls (852bp,39bp) and (823bp,26bp)  .. (796bp,19bp) .. controls (732bp,1bp) and (714bp,0bp)  .. (646bp,0bp) .. controls (274bp,0bp) and (274bp,0bp)  .. (274bp,0bp) .. controls (192bp,0bp) and (102bp,46bp)  .. (48bp,78bp);
  \draw [->] (124bp,97bp) .. controls (106bp,99bp) and (82bp,99bp)  .. (52bp,98bp);
  \draw [->] (672bp,111bp) .. controls (690bp,114bp) and (714bp,117bp)  .. (744bp,121bp);
  \draw [->] (548bp,108bp) .. controls (566bp,108bp) and (590bp,108bp)  .. (620bp,108bp);
  \pgfsetcolor{blue}
  \draw [->,very thick] (248bp,132bp) .. controls (218bp,134bp) and (167bp,135bp)  .. (124bp,127bp) .. controls (102bp,123bp) and (78bp,114bp)  .. (50bp,103bp);
  \pgfsetcolor{black}
  \draw [->] (744bp,56bp) .. controls (679bp,59bp) and (510bp,69bp)  .. (424bp,74bp);
  \pgfsetcolor{blue}
  \draw [->,very thick] (248bp,128bp) .. controls (229bp,123bp) and (203bp,115bp)  .. (173bp,104bp);
  \draw [->,very thick] (373bp,70bp) .. controls (324bp,60bp) and (215bp,41bp)  .. (124bp,57bp) .. controls (102bp,60bp) and (78bp,69bp)  .. (50bp,81bp);
  \draw [->,very thick] (52bp,86bp) .. controls (70bp,85bp) and (94bp,85bp)  .. (124bp,87bp);
  \pgfsetcolor{black}
  \draw [->] (670bp,98bp) .. controls (689bp,89bp) and (716bp,78bp)  .. (746bp,65bp);
  \pgfsetcolor{blue}
  \draw [->,very thick] (376bp,91bp) .. controls (358bp,100bp) and (331bp,112bp)  .. (300bp,124bp);
  \pgfsetcolor{black}
  \draw [->] (296bp,115bp) .. controls (314bp,105bp) and (341bp,94bp)  .. (372bp,81bp);
  \draw [->] (423bp,83bp) .. controls (442bp,87bp) and (467bp,94bp)  .. (497bp,102bp);
  \pgfsetcolor{blue}
  \draw [->,very thick] (372bp,78bp) .. controls (327bp,81bp) and (237bp,86bp)  .. (176bp,90bp);
\begin{scope}
  \pgfsetstrokecolor{black}
  \pgfsetfillcolor{lightgray}
  \filldraw (26bp,92bp) ellipse (26bp and 26bp);
  \draw (26bp,92bp) node {1};
\end{scope}
\begin{scope}
  \pgfsetstrokecolor{black}
  \pgfsetfillcolor{lightgray}
  \filldraw (274bp,130bp) ellipse (26bp and 26bp);
  \draw (274bp,130bp) node {3};
\end{scope}
\begin{scope}
  \pgfsetstrokecolor{black}
  \pgfsetfillcolor{lightgray}
  \filldraw (150bp,92bp) ellipse (26bp and 26bp);
  \draw (150bp,92bp) node {2};
\end{scope}
\begin{scope}
  \pgfsetstrokecolor{black}
  \draw (894bp,62bp) ellipse (26bp and 26bp);
  \draw (894bp,62bp) node {5};
\end{scope}
\begin{scope}
  \pgfsetstrokecolor{black}
  \pgfsetfillcolor{lightgray}
  \filldraw (398bp,76bp) ellipse (26bp and 26bp);
  \draw (398bp,76bp) node {4};
\end{scope}
\begin{scope}
  \pgfsetstrokecolor{black}
  \draw (770bp,124bp) ellipse (26bp and 26bp);
  \draw (770bp,124bp) node {7};
\end{scope}
\begin{scope}
  \pgfsetstrokecolor{black}
  \draw (770bp,54bp) ellipse (26bp and 26bp);
  \draw (770bp,54bp) node {6};
\end{scope}
\begin{scope}
  \pgfsetstrokecolor{black}
  \draw (522bp,108bp) ellipse (26bp and 26bp);
  \draw (522bp,108bp) node {9};
\end{scope}
\begin{scope}
  \pgfsetstrokecolor{black}
  \draw (646bp,108bp) ellipse (26bp and 26bp);
  \draw (646bp,108bp) node {8};
\end{scope} 
\end{tikzpicture}
}
}
\caption{Maximization of the sum of the PageRank values of the colored pages.
 Top: obligatory links; self links are forbidden; all other links are facultative. 
Bottom: bold arcs represent an optimal linking strategy. Page 4
points to all other controlled pages and Page~1, the master page, is pointed to by all other controlled pages. No
facultative link towards an external page is selected. 
}
\label{fig:smallExample}
\end{figure}

\begin{example}
The following simple counter examples show respectively that the conditions
that instantaneous rewards only depend on the current page and
that there are only local constraints are useful in the preceding
theorem.

Take a~two pages web graph without any design constraint.
Set $\alpha=0.85$, $z=(0.5, 0.5)$ and the reward per click 
$r= {\small \begin{bmatrix}1 & 10 \\ 2 & 2\end{bmatrix}}$. Then $v=(39.7, 35.8)$, Page~$2$ should
link to Page~$1$ but Page~$1$ should link to Page~$2$ because $39.7+1 \leq 35.8+10$.

Take the same graph as in preceding example. Set $r'=(0,1)$
and the coupling constraint that $\pi_1 \geq \pi_2$.
Then every optimal strategy leads to $\pi_1=\pi_2=0.5$.
This means that there is no ''master'' page because both
pages must be linked to in order to reach $\pi_i=0.5$.
\end{example}

\begin{remark} \label{rem:degenerate}
If every controlled page is allowed to point to every page, 
as in Figures~\ref{graph:local} and~\ref{graph:discrete}, there is a~master page to which every page should point.
Actually, knowing that the optimal solutions are degenerate might be of interest
to detect link spamming (or avoid being classified as a~link spammer).
The result of Proposition~\ref{prop:masterPage} and Theorem~\ref{thm:masterPageDisc} can be related to~\cite{linkspamalliances}, where the authors
show various optimal strategies for link farms: patterns with every page linking to
one single master page also appear in their study. 
We also remark that in~\cite{collusionTopologies}, 
the authors show that making collusions is a~good way to improve PageRank. We give
here the page with which one should make a~collusion. 
\end{remark}



\begin{remark} \label{rem:directMasterPage}
If there exists a~page with maximal reward 
in which all the hyperlinks can be changed, then this page is the master page.
It will have a~single hyperlink, pointing to the second highest page
in terms of mean reward before teleportation.
\end{remark}

%



\begin{remark} \label{rem:incompCrawling}
Major search engines have spent lots of efforts on crawling the web to
discover web pages and the hyperlinks between them. They can thus
compute accurately the PageRank. A search engine optimization team may not have
such a~database available. If one can program a~crawler to get a~portion of the web graph
or download some datasets of reasonable size for free
(\cite{webbase} for instance), these are still incomplete crawlings when compared to the search engine's.

We denote by $v$ and $\tilde{v}$ the mean reward before teleportation of
respectively the search engine's web graph and the trucated web graph. 
Let $I$ be the set of pages of interest, that is the pages containing
or being pointed to by a facultative link. 
 We denote by 
$R$ the length of a~shortest path from a~page in $I$ to an~uncrawled page.
We can easily show that if there are no page without outlink, then for all $i$ in $I$, 
$\abs{v_i-\tilde{v}_i} \leq \alpha^{R+1}\frac{2}{1-\alpha} \norm{\bar{r}}_{\infty}$.
\ifthenelse{\boolean{arxiv}}
{

When there are pages without outlink, the problem is more technical.
A possible approach to deal with it is to use the non-compensated PageRank~\cite{Mathieu-phd}.
}
{
}
\end{remark}

\section{PageRank Optimization with coupling constraints} \label{sec:coupling}

\subsection{Reduction of the problem with coupling constraints to constrained Markov decision processes}  


From now on, we have studied discrete or continuous PageRank Optimization problems 
but only with local constraints. 
We consider in this section the following 
PageRank Optimization problem~(\ref{eqn:ideal}) with ergodic (linear in the occupation measure) coupling constraints:
\ifthenelse{\boolean{arxiv}}
{
\begin{equation*}
  \max_{\pi,P} \sum_{i,j} \pi_i P_{i,j} r_{i,j}  \text{ st:}
\end{equation*}
\begin{equation} \label{eqn:coupling}
\pi P = \pi\;,\; \pi \in \Sigma_n\;, \;P_{i,\cdot} \in \mathcal{P}_i, \forall i \in [n]\\
\end{equation}
\begin{equation*}
 \sum_{i,j} \pi_i P_{i,j} d^k_{i,j} \leq V^k, \forall k \in K
\end{equation*}
}
{
\begin{equation} \label{eqn:coupling}
\max_{\pi,P} \big \{ U(\pi,P) \; \; ; \; \pi = \pi  P ,\; \pi \in \Sigma_n, 
\; P_{i,\cdot} \in \mathcal{P}_i, \forall i \in [n] , \; \sum_{i,j} \pi_i P_{i,j} d^k_{i,j} \leq V^k, \forall k \in K \big \}
\end{equation}
}

Examples of ergodic coupling constraints are given in Section~\ref{sec:cont}. 

When coupling constraints are present, the previous standard ergodic control
model is no longer valid, but we can use instead the theory of {\em constrained} Markov decision processes. 
We refer the reader to~\cite{Alt-CMDP} for more background.
In addition to the instantaneous reward $r$, which is used to define
the ergodic functional which is maximized, we now consider 
a finite family of cost functions $(d^k)_{k \in K}$, 
together with real constants $(V^k)_{k \in K}$, which will be
used to define the ergodic constraints.
The ergodic constrained Markov decision problem consists in finding an
admissible control strategy $(\nu_t)_{t \geq 0}$, $\nu_t \in A_{X_t} , \forall t \in \geq 0$, maximizing:
\begin{equation} \label{eqn:CMDP}
 \liminf_{T \rightarrow +\infty} \frac{1}{T}
 \mathbb{E}(\sum_{t=0}^{T-1} r(X_t,\nu_{t}))
\end{equation}
under the $|K|$ \NEW{ergodic constraints}
\begin{equation*}
 \limsup_{T \rightarrow +\infty} \frac{1}{T}
 \mathbb{E}(\sum_{t=0}^{T-1} d^k(X_t,\nu_{t})) \leq V^k, \; \forall k \in K
\end{equation*}
where the controlled process $(X_t)_{t \geq 0}$ is such that
\ifthenelse{\boolean{arxiv}}
{
\begin{equation*}
  \mathbb{P}( X_{t+1}=j | X_t , \nu_t ) = p(j|X_t,\nu_t) \enspace .
\end{equation*}
}
{
$ 
  \mathbb{P}( X_{t+1}=j | X_t , \nu_t ) = p(j|X_t,\nu_t)
$. 
}

Theorem~4.1 in~\cite{Alt-CMDP} shows that one can restrict to stationary
Markovian strategies and Theorem~4.3 in the same book gives an~equivalent
formulation of the ergodic constrained Markov decision problem~\eqref{eqn:CMDP} as a linear program.
When $A_i=\extr(\mathcal{P}_i)$, $r(i,a)=\sum_{j\in [n]} r_{i,j}a_j$, $d^k(i,a)=\sum_{j\in [n]} d^k_{i,j}a_j$ and $p(j|i,a)=a_j$ (see Proposition~\ref{prop-local}),
it is easy to see that this linear program is equivalent to:
\begin{equation} \label{eqn:LP}
 \max_{\rho} \; \big \{ \; \sum_{i,j \in [n]} \rho_{i,j} r_{i,j}  \text{ st: } 
\rho \in \mathcal{R} \; \text{ and } 
 \sum_{i,j \in [n]} \rho_{i,j} d^k_{i,j} \leq V^k, \; \forall k \in K \; \big \}
\end{equation}
where $\mathcal{R}$ is the image of $\prod_{i \in [n]} \mathcal{P}_i$ by the correspondence of Proposition~\ref{prop:homeo}.
The set $\mathcal{R}$ is a~polyhedron, as soon as every $\mathcal{P}_i$ is a~polyhedron (Proposition~\ref{prop:convex+}).

%

Following the correspondence
discussed in Proposition~\ref{prop:homeo}, we can see that the linear Problem~(\ref{eqn:LP})
is just the reformulation of Problem~(\ref{eqn:coupling}) in terms of occupation measures when
we consider total income utility~\eqref{eqn:income}. 



The last result of this section gives a~generalization to nonlinear utility functions:
\begin{prop} \label{prop:loc->glob}
 Assume that the utility function\ifthenelse{\boolean{arxiv}}{ $U$}{} can be written as $U(P) = W(\rho)$
where $W$ is concave, that the local constraints are convex in $P$
and that the coupling constraints are ergodic. Then, the PageRank
Optimization problem~\eqref{eqn:coupling} is equivalent to a~concave
programming problem in the occupation measure $\rho$, 
from which $\epsilon$-solutions can be found in 
polynomial time.
\end{prop}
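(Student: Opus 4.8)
The plan is to pass from the stochastic matrix $P$ to the occupation measure $\rho$ and to check that, under this change of variables, each hypothesis turns the problem into a standard concave program over a convex set. First I would invoke Proposition~\ref{prop:homeo}: the map $f$ is a birational transformation between the irreducible occupation measures and the irreducible stochastic matrices. Since $\alpha<1$ and $z_j>0$ force every admissible $P$ to be irreducible, this correspondence is a bijection on the entire feasible set, and it carries optima to optima. Writing $U(P)=W(\rho)$ with $\rho_{i,j}=\pi_i P_{i,j}$, maximizing $U$ over admissible $P$ is thus the same as maximizing $W(\rho)$ over the corresponding set of occupation measures.

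Next I would identify the feasible set in the $\rho$-variable and establish its convexity. The local constraints $P_{i,\cdot}\in\mathcal{P}_i$ are convex in $P$ by assumption; by Proposition~\ref{prop:convex+} their image under $f$ is exactly the convex set $\mathcal{R}$ of occupation measures, cut out by the flow relations~\eqref{e-flow} together with the lifted inequalities~\eqref{e-ineq-liftrho} obtained by clearing the denominator $\sum_k\rho_{i,k}$ in $P_{i,j}=\rho_{i,j}/\sum_k\rho_{i,k}$. The ergodic coupling constraints $\sum_{i,j}\pi_i P_{i,j} d^k_{i,j}\le V^k$ become the linear constraints $\sum_{i,j}\rho_{i,j} d^k_{i,j}\le V^k$, hence convex in $\rho$. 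Intersecting $\mathcal{R}$ with these halfspaces yields a convex feasible set, so Problem~\eqref{eqn:coupling} is equivalent to
\[
\max_{\rho}\ \big\{\, W(\rho)\ :\ \rho\in\mathcal{R},\ \sum_{i,j\in[n]}\rho_{i,j} d^k_{i,j}\le V^k,\ \forall k\in K \,\big\}\enspace .
\]
As $W$ is concave, this is the maximization of a concave objective over a convex set; in the linear case $W(\rho)=\sum_{i,j}\rho_{i,j} r_{i,j}$ it recovers the linear program~\eqref{eqn:LP}.

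For the polynomial-time $\epsilon$-solvability I would invoke the oracle-based convex optimization theory of Gr\"otschel, Lov\'asz and Schrijver~\cite{Lovasz-geomAlgo}. The feasible set admits a polynomial-time strong separation oracle: the flow relations and the coupling constraints are listed explicitly, and separation from $\mathcal{R}$ reduces to separation from each $\mathcal{P}_i$, since given a candidate $\rho$ one forms $P=f(\rho)$ and, whenever $P_{i,\cdot}\notin\mathcal{P}_i$, the separating hyperplane returned by the oracle for the effective convex set $\mathcal{P}_i$ lifts, through the same substitution used in Proposition~\ref{prop:convex+}, to a separating hyperplane for $\mathcal{R}$. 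Combining this separation oracle with a first-order (evaluation and gradient) oracle for the concave function $W$, the ellipsoid method produces an $\epsilon$-optimal solution in time polynomial in the input length and in $\log(1/\epsilon)$. The main obstacle is not the concavity of the objective, which is assumed, but the two structural points that make the reduction exact: that $f$ is a genuine bijection preserving optima on the irreducible locus (Proposition~\ref{prop:homeo}), and that the convex local constraints and their separation oracle transport correctly from $P$ to $\rho$ (Proposition~\ref{prop:convex+}); once these are in place the statement follows from the standard separation-to-optimization machinery.
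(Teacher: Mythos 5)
Your proposal is correct and follows essentially the same route as the paper: convexity of the set of occupation measures via Proposition~\ref{prop:convex+}, linearity of the ergodic coupling constraints in $\rho$, concavity of $W$, and then an appeal to oracle-based polynomial-time convex optimization (the paper cites Theorem~5.3.1 of~\cite{Nem-modernConvOpt} where you invoke the Gr\"otschel--Lov\'asz--Schrijver ellipsoid machinery~\cite{Lovasz-geomAlgo}, but these are the same standard toolkit). Your explicit construction of the separation oracle for $\mathcal{R}$ by lifting the separating hyperplanes of each $\mathcal{P}_i$ through the substitution $P_{i,j}=\rho_{i,j}/\sum_k\rho_{i,k}$ merely spells out what the paper delegates to its citation, and it is sound since the constraints $\rho_{i,j}\geq(1-\alpha)z_j\sum_k\rho_{i,k}$ keep all row sums positive on the feasible set.
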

\begin{proof}
{From} Proposition~\ref{prop:convex+}, we know that the set
of locally admissible occupation measures is convex. Adding ergodic (linear in the occupation measure)
constraints preserves this convexity property. So the whole optimization problem is concave.
Finally, Theorem~5.3.1 in~\cite{Nem-modernConvOpt} states that $\epsilon$-solutions can be found in 
polynomial time.
\end{proof}

In~particular, the (global) optimality of a~given occupation measure
can be checked by the first order optimality conditions which are
standard in convex analysis. 

\begin{remark} \label{rem:entropy}
Proposition~\ref{prop:loc->glob} applies
in~particular if $W$ is a~relative entropy utility function, ie
$ 
W(\rho) = - \sum_{i,j\in[n]} \rho_{ij}\log (\rho_{ij}/\mu_{ij}) 
$, 
where parameters $\mu_{ij}>0$ (the reference measure)
are given.

If we choose to minimize the entropy function on the 
whole web graph, we recover the TrafficRank algorithm~\cite{Tomlin-HOTS}. 
When we control only some of the hyperlinks whereas the weights of the 
others are fixed, the solution of the optimization problem gives the 
webmaster the weights that she should set to her hyperlinks in order 
to have an~entropic distribution of websurfers on her website, interpreted 
as a~fair distribution of websurfers.  
\end{remark}


\ifthenelse{\boolean{optimalitycondition}}
{
In the next section, we extend the first order optimality conditions
to the formulation in probability transitions, in order to 
get a characterization of the optimal linking strategies in the constrained PageRank Optimization problem.


\subsection{Optimality condition} \label{sec:condopt}

The following shows that the mean reward before teleportation (Definition~\ref{defn:v}) 
determines
the derivative of the utility function. Recall that the {\em tangent cone}
$\mathrm{T}_{X}(x)$ of the set $X$ at point $x$ is
the closure of the set of vectors $q$ such that $x+t q \in X$ for $t$ small enough.
\begin{prop} \label{prop:derivee}
The derivative of total utility function~\eqref{eqn:income}
is such that for all $Q \in \mathrm{T}_{\mathcal{P}}(P)$,
\begin{equation*}
\left \langle \mathrm{D}U(P) , Q \right \rangle = \sum_{i,j} (v_j(P) + r_{i,j}) \pi_i (P) Q_{i,j} 
\end{equation*}
where $v(P)$ is the mean reward before teleportation,
$\pi(P)$ is the invariant measure of $P$ and $\langle \cdot,\cdot\rangle$ is the standard (Frobenius) scalar product on $n\times n$ matrices. 
\end{prop}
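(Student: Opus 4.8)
The plan is to differentiate $U(P)=\sum_{i,j}\pi_i P_{i,j} r_{i,j} = \pi(P)\,\bar{r}(P)$ along a feasible direction $Q\in\mathrm{T}_{\mathcal{P}}(P)$, exploiting that the invariant measure $\pi(P)$ depends smoothly on $P$ (which follows from Proposition~\ref{prop:homeo}, or from the classical analyticity of the invariant measure of an irreducible stochastic matrix). I write $\dot\pi$ and $\dot{\bar{r}}$ for the directional derivatives of $\pi$ and $\bar{r}$ at $P$ in the direction $Q$. Since $\bar{r}_i=\sum_j P_{i,j} r_{i,j}$ is linear in $P$, one immediately gets $\dot{\bar{r}}_i=\sum_j Q_{i,j} r_{i,j}$, so that $\pi\dot{\bar{r}}=\sum_{i,j} r_{i,j}\pi_i Q_{i,j}$, which already accounts for the $r_{i,j}$ term in the claimed formula. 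It then remains to show that the other contribution, $\dot\pi\,\bar{r}$, equals $\sum_{i,j} v_j(P)\,\pi_i Q_{i,j}$.

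To compute $\dot\pi$, I would differentiate the two defining relations $\pi=\pi P$ and $\pi e=1$, obtaining $\dot\pi(I_n-P)=\pi Q$ and $\dot\pi e=0$. The key step is to re-express the mean reward before teleportation $v=v(P)$ as the solution of a Poisson-type equation for $P$ itself, rather than for $S$: starting from $(I_n-\alpha S)v=\bar{r}$ and substituting $\alpha S=P-(1-\alpha)ez$ gives
\[
(I_n-P)\,v=\bar{r}-c\,e,\qquad c:=(1-\alpha)\,zv\in\R .
\]
I expect this reformulation to be the crux of the argument: it turns the ``before teleportation'' quantity $v$, which is natural on the sub-stochastic matrix $S$, into the potential (bias) function associated with the full chain $P$, which is exactly the object that the sensitivity computation calls for.

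With this identity in hand, the remainder is the standard adjoint manipulation. Substituting $\bar{r}=(I_n-P)v+c\,e$ into $\dot\pi\,\bar{r}$ and using $\dot\pi(I_n-P)=\pi Q$ together with $\dot\pi e=0$ yields
\[
\dot\pi\,\bar{r}=\dot\pi(I_n-P)\,v+c\,\dot\pi e=(\pi Q)\,v=\sum_{i,j} v_j(P)\,\pi_i Q_{i,j}.
\]
Adding the two contributions gives $\langle\mathrm{D}U(P),Q\rangle=\sum_{i,j}(v_j(P)+r_{i,j})\,\pi_i(P)\,Q_{i,j}$, as claimed. One minor point to record is that $Q\in\mathrm{T}_{\mathcal{P}}(P)$ forces $Qe=0$ (the rows of $P$ must remain stochastic), which is consistent with the relation $\dot\pi(I_n-P)=\pi Q$ though not otherwise needed; since the final expression is linear in $Q$, it extends by continuity from the genuine feasible directions to their closure, the tangent cone.
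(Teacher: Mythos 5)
Your proof is correct, but it runs along a different track than the paper's. The paper works from the explicit resolvent formula: since $\pi e=1$, the invariance equation gives the closed form $\pi(P)=(1-\alpha)z(I_n-P+(1-\alpha)ez)^{-1}$, and the proof then just applies the product rule and the derivative of the matrix inverse, $\langle \mathrm{D}(A\mapsto A^{-1}),H\rangle = -A^{-1}HA^{-1}$, recognizing $v(P)=(I_n-P+(1-\alpha)ez)^{-1}\bar{r}$ (which equals $(I_n-\alpha S)^{-1}\bar{r}$ because $P=\alpha S+(1-\alpha)ez$, so the rank-one correction $(1-\alpha)ez$ exactly restores invertibility). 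You instead differentiate the defining relations $\pi=\pi P$, $\pi e=1$ implicitly, and handle the singularity of $I_n-P$ not by a rank-one correction but by the side condition $\dot\pi e=0$ together with your Poisson-type identity $(I_n-P)v=\bar{r}-ce$, $c=(1-\alpha)zv$, followed by the adjoint pairing $\dot\pi\bar r=\dot\pi(I_n-P)v+c\,\dot\pi e=(\pi Q)v$. The two computations meet at the same final identity (in the paper's route the derivative term is $(1-\alpha)zM^{-1}QM^{-1}\bar r=\pi Qv$ with $M=I_n-P+(1-\alpha)ez$), but the ingredients differ. The paper's version gets differentiability for free from the explicit rational formula, and the resolvent identity for $\pi(P)$ is useful in its own right; your version never inverts a matrix, is the standard stationary-distribution sensitivity argument valid for any smooth family of irreducible chains, and has the conceptual payoff of exhibiting $v$ (up to an additive constant) as the bias of the full chain $P$, which meshes with Proposition~\ref{prop:progdynprinc} and Remark~\ref{rem:fixPointBias}. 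The one point you must import from elsewhere---and you correctly do, via Proposition~\ref{prop:homeo} or analytic perturbation theory---is that $\pi(P)$ is differentiable at all, since implicit differentiation presupposes it; your closing remarks on $Qe=0$ and on extending from feasible directions to the tangent cone by linearity and continuity are accurate.
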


\begin{proof}
 We have $U (P) =\sum_{i,j} \pi_i (P) P_{i,j} r_{i,j} = \pi \bar{r}$ and 
$\pi = \pi P = \pi (\alpha S + (1-\alpha) e z)$.
As~$\pi e = 1$, we have an~explicit expression for $\pi$ as function of $P$:
$ 
 \pi(P) = (1-\alpha) z (I_n - P + (1-\alpha) e z)^{-1}
$. 
The result follows from derivation of $\pi(P)\bar{r}$. We need to derive a product, to derive an inverse ($\left \langle \mathrm{D}(A\mapsto A^{-1}) , H \right \rangle = - A^{-1} H A^{-1}$) and the expression of\ifthenelse{\boolean{arxiv}}{ the mean reward before teleportation}{} $v(P)=(I_n-P+(1-\alpha)ez)^{-1}\bar{r}$.
\end{proof}

The next theorem, which involves the mean reward before
teleportation, shows that although
the continuous constrained pagerank optimization problem is non-convex, the
first-order necessary optimality condition is also sufficient.

\begin{thm}[Optimality Condition] \label{thm:condopt}
Suppose that the sets $\mathcal{P}_i$ defining local constraints are all closed convex sets,
that the coupling constraints are given by the ergodic costs functions $d^k$, $k \in K$ and that
the utility function is total income utility. Denote $\mathcal{P}^d$ be the admissible set 
and $v(P)$ the mean reward before teleportation (Definition~\ref{defn:v}).
We introduce the set of saturated constraints $K_{sat} = \{ k \in K | \sum_{i,j} d^k_{i,j} \pi_i P_{i,j} = V^k \}$ and
\ifthenelse{\boolean{arxiv}}{we introduce the numbers}{we denote} $D^k_{i,j}=\pi_i d^k_{i,j} + \pi d^k (I-\alpha S)^{-1} e_i P_{i,j}$.
Then the tangent cone of $\mathcal{P}^d$ at $P$ is 
$ 
 \mathrm{T}_{\mathcal{P}^d}(P)= \Big \{ Q \in \prod_{i \in [n]} \mathrm{T}_{\mathcal{P}_i}(P_{i,\cdot})\; 
| \; \forall k \in K_{sat} \; ,\; \langle D^k, Q \rangle \leq 0  \Big \}
$ 
and $P^* \in \mathcal{P}^d$ is the optimum of the continuous PageRank Optimization problem~\eqref{eqn:ideal} 
with ergodic coupling constraints if and only if:
\begin{equation*}
\forall Q \in \mathrm{T}_{\mathcal{P}^d}(P^*)\; , \quad
\sum_{i,j \in [n]} \pi_i (v_j(P^*) + r_{i,j}) Q_{i,j} \leq 0
\end{equation*}
\end{thm}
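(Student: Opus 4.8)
The plan is to reveal the \emph{hidden convexity} of the problem by passing to occupation-measure coordinates, to prove the optimality condition there by elementary convex analysis, and then to transport everything back to transition-matrix coordinates through the birational change of variables of Proposition~\ref{prop:homeo}. This detour is what explains the a priori surprising claim that a first-order condition is sufficient even though $\mathcal{P}^d$ is non-convex in $P$: the non-convexity is merely an artefact of the chart and disappears in the variable $\rho$.

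First I would rewrite the admissible set in the occupation-measure chart. Let $\mathcal{R}^d := f^{-1}(\mathcal{P}^d)$, the image of $\mathcal{P}^d$ under the inverse of the map $f$ of~\eqref{e-def-f}. By Proposition~\ref{prop:convex+} the image $\mathcal{R}$ of $\prod_i \mathcal{P}_i$ is convex, the total income utility~\eqref{eqn:income} becomes the \emph{linear} form $\rho \mapsto \sum_{i,j} r_{i,j}\rho_{i,j}$, and each coupling constraint $\sum_{i,j} d^k_{i,j}\pi_i P_{i,j}\le V^k$ becomes the \emph{linear} inequality $\sum_{i,j} d^k_{i,j}\rho_{i,j}\le V^k$. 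Hence $\mathcal{R}^d$ is convex and Problem~\eqref{eqn:coupling} is precisely the concave (in fact linear) maximization~\eqref{eqn:LP}. For such a problem, $\rho^\ast := f^{-1}(P^\ast)$ is optimal if and only if the directional derivative of the objective is nonpositive along every admissible direction, i.e.\ $\sum_{i,j} r_{i,j}\tilde Q_{i,j}\le 0$ for all $\tilde Q \in \mathrm{T}_{\mathcal{R}^d}(\rho^\ast)$; here the necessary condition is also sufficient exactly because the objective is concave and the feasible set convex.

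Next I would compute $\mathrm{T}_{\mathcal{R}^d}(\rho^\ast)$ and pull everything back. Since every coupling constraint is linear in $\rho$, no constraint qualification is needed: the tangent cone equals the linearized cone, so $\mathrm{T}_{\mathcal{R}^d}(\rho^\ast) = \{\tilde Q \in \mathrm{T}_{\mathcal{R}}(\rho^\ast)\; :\; \sum_{i,j} d^k_{i,j}\tilde Q_{i,j}\le 0,\ \forall k \in K_{sat}\}$, only the saturated constraints being active. Proposition~\ref{prop:homeo} guarantees that $f$ is a local diffeomorphism with invertible Jacobian $\mathrm{D}f(\rho^\ast)$, so it carries tangent cones to tangent cones. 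Writing $\tilde Q = \mathrm{D}f(\rho^\ast)^{-1}Q$ for the $\rho$-direction corresponding to a $P$-direction $Q$, the product set $\prod_i \mathcal{P}_i$ (whose tangent cone is $\prod_i \mathrm{T}_{\mathcal{P}_i}(P^\ast_{i,\cdot})$) corresponds to $\mathrm{T}_{\mathcal{R}}(\rho^\ast)$, while the chain rule turns $\sum_{i,j} d^k_{i,j}\tilde Q_{i,j}$ into $\langle D^k, Q\rangle$, where $D^k$ is the $P$-gradient of the $k$-th coupling functional $P\mapsto \sum_{i,j} d^k_{i,j}\pi_i(P)P_{i,j}$. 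Likewise $\sum_{i,j} r_{i,j}\tilde Q_{i,j} = \langle \nabla_P U(P^\ast), Q\rangle$, and Proposition~\ref{prop:derivee} evaluates this last gradient as $\sum_{i,j}\pi_i(v_j(P^\ast)+r_{i,j})Q_{i,j}$. Substituting these two identities into the $\rho$-space optimality condition yields simultaneously the stated description of $\mathrm{T}_{\mathcal{P}^d}(P^\ast)$ and the stated inequality characterizing $P^\ast$.

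The main work, and the only genuinely computational step, is the identification of the two pulled-back gradients with their closed forms. Both $\nabla_P U$ and each $D^k$ are gradients of functionals of the shape $P\mapsto \sum_{i,j} c_{i,j}\pi_i(P)P_{i,j}$, so they are obtained by the very argument proving Proposition~\ref{prop:derivee}: use the explicit $\pi(P) = (1-\alpha)z(I_n - P + (1-\alpha)ez)^{-1}$, differentiate the inverse via $\mathrm{D}(A\mapsto A^{-1})\cdot H = -A^{-1}HA^{-1}$, and simplify with $(I_n - P + (1-\alpha)ez)^{-1} = (I_n-\alpha S)^{-1}$ together with the fact that admissible directions satisfy $Qe = 0$. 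This is precisely where the mean reward before teleportation $v$ (respectively its analogue for the cost $d^k$) enters, and it is the step to carry out carefully; everything else is bookkeeping transported across the diffeomorphism of Proposition~\ref{prop:homeo}. I expect the delicate point to be checking that the $P$-gradient of the coupling functional indeed reduces to the stated $D^k$, since its closed form must be read modulo directions orthogonal to $\{Q : Qe=0\}$; it is the analogue for $d^k$ of the computation already done for $r$ in Proposition~\ref{prop:derivee}.
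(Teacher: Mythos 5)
Your proposal is correct and follows essentially the same route as the paper's proof: pass to occupation measures via the birational map of Proposition~\ref{prop:homeo} (the paper invokes Section~6.C of Rockafellar--Wets to transport tangent cones through the invertible Jacobian), describe $\mathrm{T}_{\mathcal{R}^d}(\rho)$ by the saturated linear constraints, pull back with $\nabla f^{-1}$ using $\rho_{i,j}=\pi_i P_{i,j}$ and $\partial\pi_k/\partial P_{i,j}=\pi_i e_j(I-\alpha S)^{-1}e_k$ from Proposition~\ref{prop:derivee}, and obtain sufficiency of the first-order condition from concavity in $\rho$ (Proposition~\ref{prop:loc->glob}). The only difference is presentational: where you phrase the pulled-back constraint gradients abstractly as adjoints and flag the closed form of $D^k$ as the delicate step, the paper carries out exactly that computation explicitly via $(\nabla f^{-1}Q)_{i,j}=\sum_{k,l}Q_{k,l}\bigl(P_{k,l}\,\partial\pi_k/\partial P_{i,j}+\pi_k\delta_{ik}\delta_{jl}\bigr)$.
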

\begin{proof}
Let us consider the birational change of variables of
Proposition~\ref{prop:homeo}. As all the occupation measures considered are irreducible,
its Jacobian is invertible\ifthenelse{\boolean{arxiv}}{ at any admissible point.}{.} 
Thus, we can use the results of Section~6.C in~\cite{Rockafellar-varAnalysis}.
Denote $\mathcal{P}= \prod_{i \in [n]} \mathcal{P}_i$, with tangent cone $\mathrm{T}_{\mathcal{P}}(P)= \prod_{i \in [n]} \mathrm{T}_{\mathcal{P}_i}(P_{i,\cdot})$,
and $\mathcal{R}=f^{-1}(\mathcal{P})$. We have
$ 
\mathrm{T}_{\mathcal{R}^d}(\rho)=\Big \{ \sigma \in \mathrm{T}_{\mathcal{R}}(\rho)\; 
| \; \forall k \in K_{sat} \;, \; \langle d^k, \sigma \rangle \leq 0 \Big \}
$ 
and
$
\mathrm{T}_{\mathcal{P}^d}(P)=\Big \{ Q \in \mathbb{R}^{n \times n}\; 
| \; \nabla f^{-1} Q \in \mathrm{T}_{\mathcal{R}^d}(f^{-1}(P)) \Big \} 
$. 

$\nabla f^{-1} Q \in \mathrm{T}_{\mathcal{R}^d}(f^{-1}(P))$ first means that 
$\nabla f^{-1} Q \in \mathrm{T}_{\mathcal{R}}(f^{-1}(P))$ which can also be written as
$\nabla f \nabla f^{-1} Q = Q \in \mathrm{T}_{\mathcal{P}}(P)$.
The second condition is $ \forall k \in K_{sat} , \langle d^k,\nabla f^{-1} Q \rangle \leq 0$.
As $(f^{-1}(P))_{i,j}=\rho_{i,j}=\pi_i P_{i,j}$, we have
$ 
 (\nabla f^{-1} Q)_{i,j} = \sum_{k,l} Q_{k,l} (P_{k,l} \frac{\partial \pi_k}{P_{i,j}}+\pi_k \delta_{ik} \delta_{jl})
$. 
Thanks to the expression the derivative of the utility function and of $\frac{\partial \pi_k}{P_{i,j}}=\pi_i e_j(I-\alpha S)^{-1} e_k$ 
both given in Proposition~\ref{prop:derivee}, we get the expression stated in the theorem.

 By Proposition~\ref{prop:loc->glob}, the PageRank optimization problem is a~concave
programming problem in~$\rho$ and so, the first
order (Euler) optimality condition guarantees the global optimality
of a~given measure. Thus, every stationary point for the continuous PageRank Optimization
problem is a~global maximum when written in transition probabilities also.
\end{proof}
}
{}

\subsection{A Lagrangian relaxation scheme to handle coupling constraints between pages} \label{sec:lag}

The PageRank Optimization Problem with 
''ergodic'' coupling constraints~(\ref{eqn:coupling}) 
may be solved by off the shelve simplex or interior points solvers.
However, such general purpose solvers may be too slow, or too memory
consuming, to solve the largest web instances.

The following proposition yields an~algorithm that decouples the computation
effort due to complexity of the graph and due to coupling constraints.

\begin{prop} \label{prop:Lag}
The PageRank Optimization problem with $K$ ''ergodic'' coupling constraints~(\ref{eqn:coupling})
can be solved by a~Lagrangian relaxation scheme, in which the dual function
\ifthenelse{\boolean{arxiv}}
{
and one of its subgradient
\begin{equation*}
        \theta(\lambda) = \max_{\rho \in \mathcal{R}} \left< r, \rho \right> - \sum_{k \in K} \lambda_k (\left< d^k, \rho \right> - V^k)
\end{equation*}
\begin{equation*}
  \frac{\partial \theta}{\partial \lambda_k}(\lambda) = \left< d^k, \rho^*(\lambda) \right> - V^k
\end{equation*}
}
{defined by 
$ 
        \theta(\lambda) = \max_{\rho \in \mathcal{R}} \left< r, \rho \right> - \sum_{k \in K} \lambda_k (\left< d^k, \rho \right> - V^k)
$ 
and one 
 subgradient
$ 
  \frac{\partial \theta}{\partial \lambda_k}(\lambda) = \left< d^k, \rho^*(\lambda) \right> - V^k
$ 
}
are evaluated by dynamic programming and $\rho^*(\lambda)$ is a~maximizer of the expression defining $\theta(\lambda)$.
\end{prop}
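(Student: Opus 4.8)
The plan is to recognize Problem~\eqref{eqn:coupling} as the linear program~\eqref{eqn:LP} over the polyhedron $\mathcal{R}$ with the $K$ linear coupling constraints $\langle d^k,\rho\rangle\le V^k$, and to attack the latter by Lagrangian relaxation of the coupling constraints alone. First I would introduce multipliers $\lambda=(\lambda_k)_{k\in K}\in\R_+^K$ and form the Lagrangian $L(\rho,\lambda)=\langle r,\rho\rangle-\sum_{k\in K}\lambda_k(\langle d^k,\rho\rangle-V^k)$, whose partial maximization over $\rho\in\mathcal{R}$ is exactly the dual function $\theta(\lambda)$ of the statement. Since $\mathcal{R}$ is a polyhedron (Proposition~\ref{prop:convex+}) and the objective is linear, standard linear programming duality guarantees that there is no duality gap as soon as the primal is feasible and bounded, so that $\min_{\lambda\ge 0}\theta(\lambda)$ equals the optimal value of~\eqref{eqn:LP}.

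The key observation is that for fixed $\lambda$ the inner maximization is freed of the coupling constraints: writing $\theta(\lambda)=\sum_{k}\lambda_k V^k+\max_{\rho\in\mathcal{R}}\langle r-\sum_k\lambda_k d^k,\,\rho\rangle$, the remaining problem is precisely a PageRank Optimization problem with \emph{local} constraints only and modified instantaneous reward $r^\lambda_{i,j}=r_{i,j}-\sum_{k\in K}\lambda_k d^k_{i,j}$. By Proposition~\ref{prop-local} this is a unichain ergodic control problem, and by Proposition~\ref{prop:contrrate} its value and an optimal transition matrix can be computed by the $\alpha$-contracting value iteration; the associated occupation measure $\rho^*(\lambda)$ is then recovered through the birational correspondence of Proposition~\ref{prop:homeo}. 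This is the dynamic programming evaluation of $\theta(\lambda)$ announced in the proposition, whose cost is essentially that of computing a PageRank.

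Next I would establish the subgradient. Because $\theta$ is a pointwise maximum of functions that are affine in $\lambda$, it is convex, and the envelope theorem (Danskin) yields that any maximizer $\rho^*(\lambda)$ of the inner problem provides a subgradient of $\theta$ at $\lambda$, obtained by differentiating $L(\rho^*(\lambda),\cdot)$ coordinatewise; this is the constraint residual $\langle d^k,\rho^*(\lambda)\rangle-V^k$ appearing in the statement, up to the sign fixed by the direction of the dual minimization. Hence both $\theta(\lambda)$ and a subgradient are available from a single dynamic programming solve, and the convex nonsmooth dual $\min_{\lambda\ge0}\theta(\lambda)$ can be handled by a subgradient or bundle method operating in the low-dimensional $\lambda$-space of dimension $K$. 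This decouples the graph complexity, absorbed in the dynamic programming step, from the coupling complexity, handled in $\lambda$-space, which is exactly the scalability claim of the proposition.

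The step I expect to be the main obstacle is not the evaluation of $\theta$ but the recovery of a primal optimal occupation measure from the dual optimum: the inner maximizer $\rho^*(\lambda^*)$ at an optimal multiplier need not itself be feasible for the coupling constraints, and in general one must aggregate several measures $\rho^*(\lambda)$ generated along the iterations into a feasible convex combination satisfying complementary slackness. I would resolve this by exploiting the primal information accumulated by the bundle method (the convex combination of collected inner maximizers whose aggregate subgradient vanishes at $\lambda^*$), together with the no-gap property from linear programming duality, so as to certify that the aggregated measure is feasible and primal optimal.
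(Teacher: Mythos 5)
Your proposal is correct and follows essentially the same route as the paper, which likewise relaxes only the coupling constraints in the occupation-measure LP~\eqref{eqn:LP}, invokes Lagrangian duality (citing Lemar\'echal's theory) for the absence of a duality gap, and evaluates $\theta(\lambda)$ by value iteration on the local-constraints problem with modified reward $r-\sum_{k\in K}\lambda_k d^k$. Your Danskin argument and primal-recovery discussion merely flesh out what the paper delegates to the cited reference, and your sign hedge is well taken: for $\theta$ as written a subgradient is $V^k-\langle d^k,\rho^*(\lambda)\rangle$, so the formula in the statement carries an immaterial sign slip.
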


\begin{proof}
This is a~simple application of Lagrange multipliers theory, 
see~\cite{lemarechal-2001} Theorem 21 and Remark 33 for instance. Here we relax the coupling constraints in the problem 
written with occupation measures~(\ref{eqn:LP}).
We solve the dual problem, namely we minimize the dual function $\theta$ on $\mathbb{R}^K_{+}$. The 
value of this dual problem is the same as the value of the constrained primal problem and we can get a~solution of the primal
problem since there is no duality gap.
\end{proof} 
 
We have implemented a~bundle high level algorithm, in which the dual function 
is evaluated at each step by running a~value iteration algorithm, 
for a~problem with modified reward. By comparison with the unconstrained 
case, the execution time is essentially multiplied by the number of iterations 
of the bundle algorithm. 

\section{Experimental results} \label{sec:exp}

\subsection{Continuous problem with local constraints only} 
We have tried our algorithms on a~crawl on eight New Zealand Universities
available at~\cite{NZ2006}.
There are 413,639 nodes and 2,668,244 links in the graph. The controlled set we have chosen
is the set of pages containing ''maori'' in their url. There are 1292 
of them.
We launched the experiments in a~sequential manner on a~personal computer with Intel Xeon CPU at 2.98~Ghz 
and wrote the code in Scilab language.

Assume that the webmasters controlling these pages cooperate and agree to change at most 20\% of the links' weight to improve the PageRank, being understood
that self-links are forbidden
(skeleton constraint, see Section~\ref{sec:cont}).
The algorithm launched on the optimization of the sum of the PageRanks of the controlled pages (calculated
with respect to the crawled graph only, 
not by the world wide graph considered by Google)
ran 27~seconds.

The optimal strategy returned is that every controlled page except itself should link with 20\% weight to
 \quotepage{maori-oteha.massey.ac.nz/te\_waka.htm}. That page should link to 
\ifthenelse{\boolean{arxiv}}{\quotepage{maori-oteha.massey.ac.nz/tewaka/about.htm}}{\quotepage{maori-oteha. massey.ac.nz/tewaka/about.htm}}.
The sum of PageRank values goes from 0.0057 to 0.0085. 

Hence, by uniting, this team of webmasters would improve the sum of their PageRank scores of 49\%. 
Remark that all the pages point to the same page 
(except itself because self-links are forbidden).
The two best pages to point to are in fact part of a~''dead end'' of the web graph
containing only pages with maximal reward. A random surfer can only escape from this
area of the graph by teleporting, which makes the mean reward before teleportation maximal. 

\subsection{Discrete problem} \label{sec:expDisc}

On the same data set, we have considered the discrete optimization problem.
The set of obligatory links is the initial set of links. 
We have then selected 2,319,174 
facultative links on the set of controlled pages of preceding section. 

Execution time took 81 
seconds with the polyhedral approach of Section~\ref{subsec-value} (60 iterations).
We compared our algorithm with an~adaptation of the graph augmentation approach of~\cite{Blondel-PRopt}  
to total utility: this algorithm took 460 seconds (350 iterations) 
for the same precision. 
The optimal strategy is to
add no link that goes out of the website but get the internal
link structure a~lot denser. From 12,288 internal links, the optimal strategy is to add 962873 
internal links. Finally, 98.2\% 
of the links are internal links and there is
a mean number of links per page of 770. 
 The sum of PageRank values jumps
from 0.0057 to 0.0148.

Here, as the weights of the links cannot be changed, the webmaster can hardly
force websurfers to go to dead ends. But she can add so many links that
websurfers get lost in the labyrinth of her site and do not find the outlinks,
even if they were obligatory.

\subsection{Coupling linear constraints} \label{sec:expCoupl}

We would like to solve the discrete optimization problem
of the preceding section with two additional coupling constraints. We require
that each visitor coming on one of the pages of the team has a~probability to leave
 the set of pages of the team on next step of 40\% (coupling conditional probability constraint, see Section~\ref{sec:cont}).
We also require that the sum of PageRank values of the home pages of the 10 universities considered
remains at least equal to their initial value after the optimization (effective frequency constraint). 

In the case of constrained Markov decision processes, optimal strategies
are usually randomized strategies. This means that the theory cannot directly deal
with discrete action sets. 
Instead, we consider the continuous problem with
the polytopes of uniform transition measures as local admissible sets,
i.e.\ we relax the discrete pattern. 
Thus by the Lagrangian scheme of Proposition~\ref{prop:Lag}, 
we get an~upper bound on the optimal objective and we have a~lower bound 
for any admissible discrete transition matrix. 

The initial value is 0.0057 and the Lagrangian relaxation scheme gives an~upper bound of 0.00769. 
Computation took 675~s (11 high level iterations).
During the course of the Lagrangian relaxation scheme, all intermediate solutions
are discrete and three of them satisfied the coupling constraints. 
The best of them corresponds to a~sum of PageRank values of 0.00756.
Thus we have here a~duality gap of at most 1.7\%. 
In general, 
the intermediate discrete solutions need not satisfy the coupling constraints
and getting an admissible discrete solution may be difficult.

The discrete transition matrix found suggests to add 124,328 internal links
but also 11,235 external links. As in Section~\ref{sec:expDisc},
lots of links are added, but here there are also external links.

The bounding technique proposed here
can also be adapted to PageRank optimization problem
with mutual exclusion constraints.
It may also be possible to
use it to design a~branch and bound algorithm to solve the problem
exactly thanks to the bounds found.

\section*{Conclusion}

We have presented in this paper a~general framework to study
the optimization of PageRank. Our results apply to a~continuous 
problem where the webmaster can choose the weights of the hyperlinks on her pages and to
the discrete problem in which a~binary decision must be taken to decide whether a~link is present.
We have shown that the Discrete PageRank Optimization problem without coupling constraints can
be solved by reduction to a concisely described~relaxed continuous problem. 
We also showed that the continuous Pagerank optimization problem is 
polynomial time solvable, even with coupling constraints.
\ifthenelse{\boolean{arxiv}}
{

}
{
}
We gave scalable algorithms which 
rely on an 
ergodic control model and on dynamic programming techniques.
The first one, which applies to problems with local design constraints, is a fixed point scheme whose 
convergence rate shows that optimizing PageRank is not much more
complicated than computing it.
The second algorithm, which handles coupling constraints, is still efficient when the number of coupling constraints remains small.
\ifthenelse{\boolean{arxiv}}
{

}
{
}
We have seen that the mean reward before teleportation gives a~total order
of preference in pointing to a~page or an~other. This means
that pages high in this order concentrate many inlinks from controlled pages.
This is a~rather degenerate strategy when we keep in mind that a~web site should convey information.
Nevertheless, the model allows to address more 
complex problems, for instance with coupling constraints,  
in order to get less trivial solutions.  
\ifthenelse{\boolean{arxiv}}
{

}
{
}
This work may be useful to understand link
spamming, to price internet advertisements or, by changing the objective
function, to design web sites with other goals like fairness or usefulness.
The latter is the object of further research.

\bibliographystyle{IEEEtran.bst}
\bibliography{IEEEabrv,pagerank}

\end{document}